
\documentclass[a4paper,11pt]{article}
\usepackage[french,english]{babel}
\usepackage[babel=true,kerning=true]{microtype}
\usepackage[utf8]{inputenc} 
\usepackage{graphicx}
\usepackage{amssymb}
\usepackage{amsmath}
\usepackage{amsthm}
\usepackage{geometry}
\usepackage{tikz}
\usepackage{hyperref}

\hypersetup{
colorlinks=true,
pdfauthor={Mickaël Kourganoff},
pdftitle={},
pdfcreator=PDFLaTeX,
pdfproducer=PDFLaTeX,
linkcolor=[rgb]{0,0,0.7},
citecolor=[rgb]{0.7,0,0},
}
\usetikzlibrary{shapes.geometric}
\usetikzlibrary{through,calc}
\usetikzlibrary{plotmarks}

\tikzset{
    scale plot marks/.is choice,
    scale plot marks/false/.code={
        \def\pgfuseplotmark##1{\pgftransformresetnontranslations\csname pgf@plot@mark@##1\endcsname}
    },
    scale plot marks/true/.style={},
    scale plot marks/.default=true
}

\newtheorem{prop}{Proposition}[section]
\newtheorem{thm}[prop]{Theorem}
\newtheorem{lemme}[prop]{Lemma}

\newtheorem{cor}[prop]{Corollary}

\theoremstyle{remark}

\newtheorem{remarque}[prop]{Remark}

\theoremstyle{definition}

\newtheorem{definition}[prop]{Definition}
\newtheorem{exemple}[prop]{Example}

\newcommand{\abs}[1]{\left\lvert #1 \right\rvert}

\newcommand{\norm}[1]{\left\lVert #1 \right\rVert}
\newcommand{\setof}[2]{\left\{ #1 ~ \middle\arrowvert ~ #2 \right\}}
\newcommand{\prodscal}[2]{\left\langle #1 ~ \middle\arrowvert ~ #2 \right\rangle}

\begin{document}

\selectlanguage{english}

\title{Similarity structures \\ and de Rham decomposition}
\author{Mickaël Kourganoff}

\maketitle

\abstract{A similarity structure on a connected manifold $M$ is a Riemannian metric on its universal cover $\tilde M$ such that the fundamental group of $M$ acts on $\tilde M$ by similarities. If the manifold $M$ is compact, we show that the universal cover admits a de Rham decomposition with at most two factors, one of which is Euclidean. Very recently, after Belgun and Moroianu conjectured that the number of factors was at most one, Matveev and Nikolayevsky found an example with two factors. When the non-flat factor has dimension $2$, we give a complete classification of the examples with two factors. In greater dimensions, we make the first steps towards such a classification by showing that $M$ is a fibration (with singularities) by flat Riemannian manifolds. Up to a finite covering of $M$, we may assume that these manifolds are flat tori. We also prove a version of the de Rham decomposition theorem for the universal covers of manifolds endowed with locally metric connections. During the proof, we define a notion of transverse (not necessarily flat) similarity structure on foliations, and show that foliations endowed with such a structure are either transversally flat or transversally Riemannian. None of these results assumes analyticity.}

\section{Introduction}

\subsection{Similarity structures}

A similarity $\phi: M_1 \to M_2$ of ratio $\lambda \in \mathbb R_{>0}$ between two Riemannian manifolds $(M_1, g_1)$ and $(M_2, g_2)$ is a diffeomorphism such that $\phi^* g_2 = \lambda^2 g_1$. The similarity group $\mathrm{Sim}(M)$ of a manifold $M$ is the group of all similarities from $M$ to itself.

A similarity structure on a connected manifold $M$ is a Riemannian metric $g$ on its universal cover $\tilde M$ such that $\pi_1(M)$ acts on $\tilde M$ as a subgroup of $\mathrm{Sim}(\tilde M)$: thus, the Riemannian metric is only defined locally ``up to a constant'' on the manifold $M$. Notice that the Levi-Civita connection $\tilde \nabla$ of $g$ does project to a connection $\nabla$ on $M$. Here are three fundamental examples (the first one is a simple particular case of the other two):

\begin{exemple} \label{exFlat}
Consider $N = (\mathbb R^n \setminus \{0\}, g)$, where $g$ is the restriction of the standard Euclidean metric of $\mathbb R^n$, and the subgroup $G$ of $\mathrm{Diffeo}(N)$ generated by the similarity $\varphi: x \mapsto \lambda x$, with some $\lambda \in (0,1)$. The metric $g$ induces a metric on the universal cover of $N$ (which is $N$ itself if $n \geq 3$). Thus, $M = N/G$ is naturally endowed with a similarity structure.
\end{exemple}

\begin{exemple} \label{exClosed}
Let $(M,g)$ be a connected Riemannian manifold, and $(\tilde M, \tilde g)$ its universal cover. Any closed $1$-form $\omega$ on $M$ lifts to an exact $1$-form $\tilde \omega$ on $\tilde M$. Consider a primitive $f$ of $\tilde \omega$ and let $\tilde h = e^f \tilde g$. Then the fundamental group $\pi_1(M)$ acts on $(\tilde M, \tilde h)$ by similarities, and thus $\tilde h$ induces a similarity structure on $M$. The group $\pi_1(M)$ acts by isometries if and only if $\omega$ is exact.
\end{exemple}

\begin{exemple} \label{exCone}
Let $(N,g)$ be any compact connected Riemannian manifold. The \emph{Riemannian cone over $N$} is the manifold $C = N \times \mathbb R_{>0}$ endowed with the Riemannian metric $t^2 g + dt^2$. Consider the subgroup $G$ of $\mathrm{Diffeo}(C)$ generated by the similarity $\varphi: (x, t) \mapsto (x, \lambda t)$ (where $\lambda \in (0,1)$). Then $M = C/G$ is a compact manifold endowed with a similarity structure.
\end{exemple}

In 1979, Gallot studied the holonomy group of Riemannian cones~\cite{MR543217}. The holonomy group of a manifold $M$ endowed with a connection $\nabla$ at a point $x \in M$, written by $\mathrm{Hol}_x(\nabla)$, is the subgroup of $GL(T_xM)$ obtained by the parallel transport along all loops based at $x$. The manifold is said to have irreducible holonomy if there is no subspace of $T_xM$ invariant by the holonomy group at $x$: when $M$ is connected, this property does not depend on the choice of $x$. The holonomy group of a Riemannian manifold is the holonomy group of its Levi-Civita connection. Similarity structures have more possible holonomy groups than Riemannian manifolds. For example, the holonomy group in Example~\ref{exFlat} is $\mathbb Z$.

\begin{thm}[Gallot] \label{thmGallot}
If $(C, g)$ is the Riemannian cone over a compact connected Riemannian manifold $N$, then either $C$ is flat, or it has irreducible holonomy.
\end{thm}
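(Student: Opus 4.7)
The plan is to exploit the \emph{radial vector field} $\xi = t\,\partial_t$ on $C$. From the Christoffel symbols of $t^2 g + dt^2$ one checks directly that $\nabla \xi = \mathrm{Id}_{TC}$, i.e.\ $\nabla_X \xi = X$ for every tangent vector $X$. Differentiating once more and using torsion-freeness,
$R(X,Y)\xi = \nabla_X Y - \nabla_Y X - [X,Y] = 0$, so $\xi$ lies in the kernel of every curvature operator; by the pair-symmetry $R(X,Y,Z,W)=R(Z,W,X,Y)$ we also have $R(\xi,\cdot)\cdot \equiv 0$, and in particular every $2$-plane containing $\xi$ has sectional curvature zero.

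Now suppose the holonomy of $C$ is reducible, and pick a proper orthogonal parallel decomposition $TC = \mathcal V_1 \oplus \mathcal V_2$ (globally on the universal cover, locally on $C$). Decompose $\xi = \xi_1 + \xi_2$ accordingly. Since each $\mathcal V_i$ is parallel, projecting $\nabla_X \xi = X$ onto the two factors yields $\nabla_X \xi_i = \mathrm{pr}_{\mathcal V_i}(X)$, so each $\xi_i$ restricts to a ``position field'' with $\nabla^{(i)}\xi_i = \mathrm{Id}$ on each leaf of $\mathcal V_i$. Setting $u_i := \tfrac12 |\xi_i|^2$, one has $\nabla u_i = \xi_i$ and $u_1 + u_2 = \tfrac12 t^2$. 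To locate a zero of one of the $\xi_i$, I would restrict to the compact cross-section $N\times\{t_0\}$: at a point $p_0$ minimising $u_1$ there, $\nabla u_1 = \xi_1$ must be normal to the cross-section, hence parallel to $\partial_t$, hence to $\xi$. Combined with $\xi_1 \perp \xi_2$, this proportionality forces either $\xi_1(p_0) = 0$ or $\xi_2(p_0) = 0$; after relabelling we may assume $\xi_1(p_0) = 0$.

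At $p_0$ the identity $\nabla^{(1)}\xi_1 = \mathrm{Id}$ with vanishing initial value determines $\xi_1$ along any geodesic $\gamma$ of the $\mathcal V_1$-leaf emanating from $p_0$, namely $\xi_1(\gamma(t)) = t\,\dot\gamma(t)$. Consequently $\dot\gamma$ is proportional to $\xi_1$ along $\gamma$, so the Jacobi equation $J'' + R^{(1)}(J,\dot\gamma)\dot\gamma = 0$ collapses to $J'' = 0$ (using $R^{(1)}(\cdot,\xi_1)\cdot = 0$); the Jacobi field with $J(0)=0$, $J'(0)=w$ is $J(t)=tw$, making $\exp_{p_0}$ a local isometry and the $\mathcal V_1$-leaf flat near $p_0$. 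The \emph{main obstacle} will be to spread this local flatness to all of $C$. The $\mathbb R_{>0}$-homothety $\mu_\lambda:(x,t)\mapsto(x,\lambda t)$ preserves the decomposition and the $\xi_i$, so the zero set $\{\xi_1 = 0\}$ is a $\mu_\lambda$-invariant cone meeting every cross-section $N\times\{t_0\}$; combined with $\nabla_X \xi_1 = 0$ for $X\in\mathcal V_2$ (so that $\xi_1$ remains zero along $\mathcal V_2$-directions out of $p_0$), this should let one propagate flatness through neighbouring leaves of $\mathcal V_1$, and then symmetrically through leaves of $\mathcal V_2$, to conclude flatness of $C$. A secondary subtlety is that $C$ is incomplete at $t=0$, so the classical global de Rham theorem cannot be invoked directly; one works throughout with local splittings, with compactness of $N$ and the homothety $\mu_\lambda$ together substituting for the missing completeness.
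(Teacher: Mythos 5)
Your route is genuinely different from the paper's. The paper obtains Gallot's theorem as a corollary of Theorem~\ref{thmConformalStructure}, by a soft argument on Cauchy completions: the completion of $\tilde C$ adds a single point, whereas a splitting $\mathbb R^q \times M_1$ with $q \geq 1$ and $M_1$ incomplete would force an infinite Cauchy boundary. You instead attack the cone directly via the position field $\xi = t\,\partial_t$, which is essentially Gallot's original method. The first two thirds of your argument are correct and cleanly executed: $\nabla\xi = \mathrm{Id}$ gives $R(\cdot,\cdot)\xi = 0$; since $R(X,Y)$ preserves each parallel factor, each projection satisfies $R(\cdot,\cdot)\xi_i = 0$; minimising $u_1$ on the compact cross-section produces a point where one of the $\xi_i$ vanishes; and the Jacobi-field computation correctly yields intrinsic flatness of the $\mathcal V_1$-leaf on the image of $\exp_{p_0}$.

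The genuine gap is the propagation step, which you flag but do not resolve, and the mechanism you sketch is not sufficient. Knowing that $\{\xi_1 = 0\}$ contains the ray through $p_0$ and the $\mathcal V_2$-leaf through $p_0$ only gives vanishing of $R$ on $\mathcal V_1$-directions along the $\exp_q$-images of points $q$ of that set, and nothing forces these images to exhaust $C$; moreover the symmetric step needs a zero of $\xi_2$, which you never exhibit (it does exist: if minimising $u_2$ also returned a zero of $\xi_1$, then $u_1 \equiv 0$ on the cross-section and $\mathcal V_1$ would have rank $0$ --- but this must be said). The $\mu_\lambda$-invariance of the splitting, hence of the $\xi_i$, is also asserted without proof; it needs, e.g., connectedness of $\{\mu_s\}_{s\in[\lambda,1]}$ acting on the canonical de Rham factors. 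The missing idea that actually closes the argument is quantitative: where $\xi_1 \neq 0$, the unit field $\xi_1/|\xi_1|$ is geodesic with $|\xi_1|$ growing at unit speed along it, while $|\xi_2|$ is constant along every $\mathcal V_1$-curve; hence from any $p$ with $\xi_2(p) \neq 0$ the backward integral curve reaches a zero of $\xi_1$ in time $|\xi_1(p)|$, and $t^2 = |\xi_1|^2 + |\xi_2|^2 \geq |\xi_2(p)|^2$ keeps it in the compact region $N \times [\,|\xi_2(p)|, t(p)\,]$, away from the apex. Thus every point of the dense open set $\{\xi_1 \neq 0,\ \xi_2 \neq 0\}$ lies on a $\mathcal V_1$-geodesic issued from a zero of $\xi_1$ and on a $\mathcal V_2$-geodesic issued from a zero of $\xi_2$, whence $R = R_1 \oplus R_2 = 0$ there, and everywhere by continuity. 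Without some such control, the incompleteness at the apex genuinely obstructs ``spreading'' the flatness, so as written the proof is incomplete.
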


In 2014, Belgun and Moroianu~\cite{MR3491885} asked whether this result generalizes to all similarity structures on compact manifolds. In other words, assuming that a Riemannian manifold $\tilde M$ has a compact quotient $M$ such that $\pi_1(M)$ acts by similarities, but not only by isometries, is it true that $\tilde M$ is either irreducible or flat? In 2015, Matveev and Nikolayevsky~\cite{MR3335002} answered negatively to this question by a counterexample. In this paper, we prove the following result:

\begin{thm} \label{thmConformalStructure}
Consider a compact manifold $M$ endowed with a similarity structure, and its universal cover $\tilde M$ equipped with the corresponding Riemannian structure $g$. Assume that $M$ is not globally Riemannian, \emph{i.e.} $\pi_1(M)$ is not a subgroup of $\mathrm{Isom}(\tilde M)$. Then we are in exactly one of the following situations:
\begin{enumerate}
\item $\tilde M$ is flat.
\item $\tilde M$ has irreducible holonomy and $\dim(\tilde M) \geq 2$.
\item $\tilde M = \mathbb R^q \times N$, where $q \geq 1$, $\mathbb R^q$ is the Euclidean space, and $N$ is a non-flat, non-complete Riemannian manifold which has irreducible holonomy.
\end{enumerate}
\end{thm}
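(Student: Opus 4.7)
The plan is to apply a de~Rham-type decomposition to the universal cover $\tilde M$, determine how $\pi_1(M)$ interacts with it through its similarity action, and finally bound the number of non-flat factors using compactness of $M$ together with a dichotomy for foliations carrying a transverse similarity structure.

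First, I would apply a local de~Rham decomposition to the simply connected Riemannian manifold $(\tilde M, g)$. At a base point $p \in \tilde M$, the restricted holonomy group acts orthogonally on $T_p \tilde M$ and splits it into a maximal trivial summand and non-trivial irreducible summands. Parallel transport turns these into mutually orthogonal, integrable, $\tilde\nabla$-parallel distributions on $\tilde M$, and simple connectedness of $\tilde M$ upgrades them to a genuine product
\[
\tilde M \;=\; \mathbb R^q \times N_1 \times \cdots \times N_k,
\]
with $\mathbb R^q$ the flat factor and each $N_i$ of irreducible non-trivial holonomy. Some care is needed since $\tilde M$ may fail to be complete (see Example~\ref{exFlat}): the classical de~Rham theorem has to be replaced by the version for universal covers of manifolds with locally metric connection announced in the abstract.

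Second, the action of $\pi_1(M)$ on $\tilde M$ preserves the Levi-Civita connection $\tilde\nabla$, because the Christoffel symbols are unchanged by a constant rescaling of $g$. Hence $\pi_1(M)$ permutes the de~Rham factors; since $\mathbb R^q$ is the canonical maximal flat factor, it is stabilized. Passing to a finite-index subgroup $\Gamma \leq \pi_1(M)$, I may assume each $N_i$ is also stabilized. A similarity of a Riemannian product that preserves the factors must be a product of similarities on each factor with a common ratio, so each $\gamma \in \Gamma$ writes as $\gamma_0 \times \gamma_1 \times \cdots \times \gamma_k$ with a single ratio $\lambda(\gamma)$.

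Third, I would invoke the classical rigidity statement that a complete Riemannian manifold admitting a similarity of ratio $\lambda \neq 1$ is flat Euclidean (iterate to a fixed point and conjugate by $\exp$). Since $\pi_1(M) \not\subset \mathrm{Isom}(\tilde M)$, some $\gamma \in \pi_1(M)$ has $\lambda(\gamma) \neq 1$, and a power lies in $\Gamma$; its restriction to each non-flat factor is a non-isometric similarity, forcing every $N_i$ to be non-complete. Matching the invariants $(q,k)$ to the three cases of the statement, case~(1) corresponds to $k=0$, case~(2) to $k=1$ with $q=0$ (and $\dim \tilde M \geq 2$, since a one-dimensional manifold is flat), and case~(3) to $k=1$ with $q \geq 1$. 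It only remains to prove $k \leq 1$.

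The main obstacle is ruling out $k \geq 2$, and here I would use the transverse similarity structures on foliations introduced in the paper. Assuming $k \geq 2$, the decomposition of $\tilde M$ projects to a smooth foliation $\mathcal F$ on $M$ whose lift on $\tilde M$ has leaves $N_1 \times \{*\}$; the normal bundle to $\mathcal F$ inherits a transverse similarity structure because $\Gamma$ acts on the orthogonal factor by similarities of a common ratio. The dichotomy stated in the abstract, that such a foliation must be either transversally flat or transversally Riemannian, applied in the transversally non-Riemannian regime (forced by $\lambda(\gamma) \neq 1$), would make the transverse direction to $\mathcal F$ flat; but this transverse direction is $N_2 \times \cdots \times N_k$, and any single $N_j$ has irreducible non-trivial holonomy, contradiction. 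Compactness of $M$ is crucial to globalize the transverse structure and trigger the dichotomy. I expect this to be the hardest step, since it depends on the construction and fine analysis of transverse similarity structures and on proving the stated dichotomy for them.
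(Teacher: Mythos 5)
Your first three steps track the paper closely: the existence of a de~Rham decomposition of $\tilde M$ is exactly Theorem~\ref{thmLocallyMetric} (which is where Theorem~\ref{thmFoliations} is actually used in the paper, together with the Ponge--Reckziegel splitting); the reduction to a finite-index subgroup acting by products of similarities with a common ratio is how the paper sets things up; and the rigidity ``complete $+$ similarity of ratio $\neq 1$ $\Rightarrow$ flat Euclidean'' is Proposition~\ref{propHomothetyFlat} (via Proposition~\ref{propProperAction}), which correctly yields non-completeness of the non-flat factor. Up to that point the proposal is sound.

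The gap is in the step you yourself single out as the hardest, ruling out two non-flat irreducible factors, and it is a genuine one: the dichotomy of Theorem~\ref{thmFoliations} cannot be triggered the way you want. ``Transversally Riemannian'' means that \emph{some} metric on the transversal is invariant under the holonomy pseudogroup, not that the original metric $g_T$ is; so the existence of an element with $\lambda(\gamma)\neq 1$ does not place you in a ``transversally non-Riemannian regime.'' On the contrary, since your transversal $\mathbb R^q\times N_2\times\cdots\times N_k$ is non-flat (it contains $N_2$), Theorem~\ref{thmFoliations} concludes that the foliation \emph{is} transversally Riemannian, e.g.\ for the rescaled metric $\norm{R}_g\, g$ of Proposition~\ref{pseudodistance} where the Riemann tensor is nonzero, and no contradiction follows. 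This is not hypothetical: in Example~\ref{exThird} the foliation by the $\mathbb R^q$-leaves has non-flat transversal $N$ and non-isometric holonomy for $g$, yet it is transversally Riemannian --- this is precisely how Lemma~\ref{lemmaLieGroup} is proved. The paper closes this step by a different mechanism, Proposition~\ref{propProductProperActions}: if $\tilde M=M_1\times M_2$ with neither factor Euclidean, then $\mathrm{Sim}(M_1)$ and $\mathrm{Sim}(M_2)$ act \emph{properly} (Proposition~\ref{propProperAction}); properness uniformly bounds the ratio of any similarity returning a basepoint to a compact fundamental domain, and cocompactness of $\pi_1(M)$ then produces an element of $\pi_1(M)$ whose two components would be forced to have different ratios, contradicting the fact that they must coincide. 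Some such properness/cocompactness argument is the missing ingredient; the foliation dichotomy alone cannot deliver the bound $k\leq 1$.
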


In 2015, Matveev and Nikolayevsky~\cite{matveev2015locally} proved Theorem~\ref{thmConformalStructure} under the assumption that the manifold $(\tilde M, g)$ is analytic, and asked whether the theorem holds without this assumption. Here, we answer positively to this question, by a totally new proof.

\begin{proof}[\textbf{Theorem~\ref{thmConformalStructure} implies Gallot's theorem.}] After admitting that Theorem~\ref{thmConformalStructure} holds, let us prove Theorem~\ref{thmGallot} in a new way. Consider the universal cover $\tilde C$ of the cone $C$ over $N$, and its Cauchy completion $\hat{\tilde C}$. Since $\tilde C$ is the cone over $\tilde N$ (the universal cover of $N$), the difference $\hat{\tilde C} \setminus \tilde C$ is a single point. Since $C$ has a compact quotient endowed with a similarity structure, Theorem~\ref{thmConformalStructure} applies to $C$. To obtain Theorem~\ref{thmGallot}, assume that $\tilde C = \mathbb R^q \times M_1$ where $q \geq 1$, and notice the following contradiction: $\hat {\tilde C} = \mathbb R^q \times \hat{M_1}$, and $\hat{M_1} \setminus M_1 \neq \emptyset$, so the set $\hat{\tilde C} \setminus \tilde C = \mathbb R^q \times (\hat{M_1} \setminus M_1)$ has infinite cardinal. Thus, Theorem~\ref{thmGallot} is proved.
\end{proof}

\begin{proof}[\textbf{Theorem~\ref{thmConformalStructure} generalizes a result obtained by Belgun and Moroianu.}]  In the setting of Theorem~\ref{thmConformalStructure}, Belgun and Moroianu proved that, under an additional assumption on the lifetime of geodesics, only the first two cases are possible (\cite{MR3491885}, Theorem 1.4). We are now going to check that in the third case, this additional assumption cannot be satisfied. Hence, we will see that the proof of Theorem~\ref{thmConformalStructure} contains a new proof of their theorem.

Consider a manifold $M$ endowed with a similarity structure, which satisfies the assumptions of Theorem~\ref{thmConformalStructure}, and its universal cover $(\tilde M, g)$. If $X$ is a vector in the unit tangent bundle $S\tilde M$, denote by $\mathcal L(X)$ the \emph{lifetime} of the half-geodesic tangent to $X$, that is, the supremum of the times for which this half-geodesic is defined. Then, let:
\[ \mu: \tilde M \to [0,+\infty], \quad \mu(x) = \sup \left\{\{0\} \cup \setof{\mathcal L(X)}{X \in S_x\tilde M, \quad \mathcal L (X) < +\infty}\right\}, \]
where $S_x\tilde M$ is the set of unit length vectors tangent to $\tilde M$ at $x$.

Belgun and Moroianu proved that, if $\mu$ is locally bounded on $\tilde M$, then either Case~1 or Case~2 of Theorem~\ref{thmConformalStructure} applies.

To obtain Belgun and Moroianu's theorem from Theorem~\ref{thmConformalStructure}, we assume that the third case holds (\emph{i.e.} $\tilde M = \mathbb R^q \times N$) and look for a contradiction. Let $X \in S\mathbb R^q$ and $Y \in SN$ such that the lifetime $\mathcal L(Y)$ of the half-geodesic tangent to $Y$ in the manifold $N$ is finite. Then for $t \in (0,\pi/2)$, the lifetime of the half-geodesic tangent to $(\cos(t) \cdot X, \sin(t) \cdot Y)$ in the manifold $\tilde M$ is $\mathcal L(Y) / \sin(t)$, which tends to $+\infty$ as $t \to 0$. Thus $\mu$ is not locally bounded and the proof is complete.
\end{proof}

\begin{exemple} \label{exThird}
It is not obvious how to construct examples which fall into the third category in Theorem~\ref{thmConformalStructure}. Let us give the recipe to construct an example:

\begin{enumerate}
\item Choose $q \geq 1$ and consider the torus $\mathbb T^{q+1} = \mathbb R^{q+1} / \mathbb Z^{q+1}$.
\item Consider a linear diffeomorphism of the torus $A \in SL_{q+1}(\mathbb Z)$, such that there exists a number $\lambda \in (0,1)$, and a decomposition $\mathbb R^{q+1} = E^s \oplus E^u$ invariant by $A$, and a positive definite symmetric bilinear form $b$ on $E^s$ satisfying the following:
\begin{enumerate}
\item the stable subspace $E^s$ has dimension $q$, and $A|_{E^s}$ is a similarity of ratio $\lambda$, \emph{i.e.} one may write $A|_{E^s} = \lambda \cdot O$, where $O \in O(E^s, b)$ is a linear mapping which preserves the form $b$,
\item the unstable subspace $E^u$ is one-dimensional.
\end{enumerate}
(In particular, the diffeomorphism $A$ is Anosov.)
\item Construct the mapping torus $M$ of the diffeomorphism $A$ in the following way: take the quotient of $\mathbb T^{q+1} \times (0, +\infty)$ by the mapping $\Phi : (x, z) \mapsto (Ax, \lambda z)$.
\item Consider a basis $(e_1, \ldots, e_q)$ of $E^s$ which is orthonormal for $b$, and $e_{q+1} \in E^u$: the basis $(e_1, \ldots, e_q, e_{q+1})$ of $\mathbb R^{q+1}$ provides local coordinates $(x_1, \ldots, x_{q+1})$ in a neighborhood of each point of $\mathbb T^{q+1}$. Define a Riemannian metric $g$ on $\mathbb T^{q+1} \times (0, +\infty)$ by \[ g = dx_1^2 + \ldots + dx_q^2 + \varphi(z) dx_{q+1}^2 + dz^2, \]
where $\varphi: (0,+\infty) \to (0, +\infty)$ is a smooth function such that for all $z \in (0, +\infty)$, $\varphi(\lambda z) = \lambda^{2q+2} \varphi(z)$.
\end{enumerate}
Notice that $\Phi^*g = \lambda^2 g$: thus the metric $g$ induces a similarity structure on $M$. The universal cover $\tilde M$ of $M$ is isometric to $\mathbb R^q \times N$, where $\mathbb R^q$ is the Euclidean space $(E^s, b)$, and $N = E^u \times (0, +\infty)$. Furthermore, the Gaussian curvature of the surface $N$ is given by $-\frac{\psi''(z)}{\psi(z)}$, where $\psi = \sqrt{\varphi}$. If $N$ is flat, then $\psi''(z)$ is zero everywhere, so $\psi$ is an affine mapping, which contradicts the assumption $\varphi(\lambda z) = \lambda^{2q + 2} \varphi(z)$. Thus, the manifold $N$ is not flat, and $M$ corresponds to the third case of Theorem~\ref{thmConformalStructure}.
\end{exemple}

\begin{remarque} \label{rkMMP} It turns out that in Example~\ref{exThird}, the only possible values for $q$ are $1$ and $2$. Indeed, for $q \geq 3$, Madani, Moroianu and Pilca~\cite{madani2017weyl} proved that it is impossible to construct a linear diffeomorphism $A$ satisfying Conditions (2a) and (2b).
\end{remarque}

We will say that two manifolds $M_1$ and $M_2$ endowed with similarity structures are isomorphic if there is a diffeomorphism between $M_1$ and $M_2$ which lifts to a similarity between the universal covers $\tilde M_1$ and $\tilde M_2$. Matveev and Nikolayevsky's example~\cite{MR3335002} is isomorphic to Example~\ref{exThird} with the choice $q = 1$ and $\varphi(z) = z^4$. In this paper, we prove the following:
\begin{thm} \label{thmDim2}
Consider a manifold $M$ which corresponds to the third case of Theorem~\ref{thmConformalStructure}, and assume that $\dim(N) = 2$. Then $M$ is isomorphic to a manifold constructed in Example~\ref{exThird} for some choice of $q$, $A$ and $\varphi$. In particular, $M$ is the mapping torus of an Anosov diffeomorphism of the torus.
\end{thm}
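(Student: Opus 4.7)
The goal is to reconstruct, from the abstract data of $M$, all the ingredients of Example~\ref{exThird}: the integer $q$, the Anosov integer matrix $A$ with $q$-dimensional stable subspace, and the warping function $\varphi$. Since $\mathbb R^q$ is the maximal flat factor in the de Rham decomposition and $N$ has irreducible non-trivial holonomy, any similarity of $\tilde M$ preserves the splitting; so each $\gamma \in \pi_1(M)$ factors as $(\gamma_1, \gamma_2) \in \mathrm{Sim}(\mathbb R^q) \times \mathrm{Sim}(N)$ with a common ratio $\lambda(\gamma)$. Denote by $K \subset \pi_1(M)$ the kernel of $\lambda$, a proper subgroup since $M$ is not globally Riemannian.

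\textbf{Key step: a Killing field on $N$.} The heart of the proof is to produce a non-trivial Killing vector field $X$ on $N$. I plan to invoke the structure theorem stated in the abstract: up to a finite cover of $M$, the manifold carries a fibration (possibly with singularities) by flat Riemannian tori. Because the $\mathbb R^q$-slices are already flat and totally geodesic, each flat fiber contains an $\mathbb R^q$-slice, so has dimension $q$ or $q+1$. The case of dimension $q$ is ruled out because the induced transverse similarity structure on the $2$-dimensional leaf space $N$ would then be non-flat yet admit a co-compact similarity action, which in dimension $2$ contradicts the dichotomy of transverse similarity structures proven earlier in the paper. Hence the flat fibers have dimension $q+1$, meeting each $N$-slice along a $1$-dim flat curve; the tangent direction to these curves is then a Killing field $X$ on $N$.

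\textbf{Warped product form of $N$ and functional equation.} Using $X$, pick coordinates $(x, z)$ on $N$ with $X = \partial_x$ and $\partial_z$ geodesic-orthogonal, so that
\[ g_N = \varphi(z)\, dx^2 + dz^2 \]
for some smooth positive function $\varphi$. A global analysis (completeness of the Killing orbits, incompleteness of the transverse direction, cocompactness of the $\pi_1(M)$-action) identifies $N = \mathbb R \times (0, +\infty)$ after rescaling. A non-isometric $\gamma \in \pi_1(M)$ of ratio $\lambda$ preserves the canonical Killing direction (since $X$ is unique up to scale), so $\gamma_2$ takes the form $(x, z) \mapsto (a x + b, \lambda z)$; the identity $\gamma_2^* g_N = \lambda^2 g_N$ then gives $\varphi(\lambda z) = (\lambda/a)^2 \varphi(z)$.

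\textbf{Recovering the Anosov structure.} The isometric subgroup $K$ acts on the combined flat $(q+1)$-dim submanifold $\mathbb R^q \times \mathbb R_x \subset \tilde M$ by translations, and (after the finite cover) forms a lattice $\Lambda \cong \mathbb Z^{q+1}$. A non-isometric $\gamma$ of ratio $\lambda$ descends to a linear automorphism $A \in GL(\Lambda) \cong GL_{q+1}(\mathbb Z)$ with all $q$ stable eigenvalues equal to $\lambda$ and unstable eigenvalue $a$; $|\det A| = 1$ then forces $a = \pm \lambda^{-q}$, so $(\lambda/a)^2 = \lambda^{2q+2}$, matching the functional equation of Example~\ref{exThird} exactly. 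Thus $M$ is isomorphic to the mapping torus of the Anosov matrix $A$ as in Example~\ref{exThird}. The main obstacle is the Killing-field step, where the dimension hypothesis $\dim N = 2$ enters crucially: it ensures the complement to the $1$-dim flat fiber direction inside $N$ is also $1$-dim, leaving no room for moduli in the transverse geometry.
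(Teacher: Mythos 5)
Your overall architecture matches the paper's: both arguments lean on Theorem~\ref{thmClosures} (with $n=2$ forcing the leaf closures to have dimension exactly $q+1$) to extract a one-dimensional isometric structure on $N$, write the metric as a warped product $\varphi(z)\,dy^2+dz^2$, derive the functional equation $\varphi(\lambda z)=\lambda^{2q+2}\varphi(z)$ from unimodularity of the induced map on the flat $(q+1)$-dimensional fiber, and recover the Anosov matrix from the lattice $\Gamma_0\cong\mathbb Z^{q+1}$. So the route is the intended one, and the endgame (functional equation, $|\det A|=1$ forcing $a=\pm\lambda^{-q}$) is correct.

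The genuine gap is at your ``key step''. From the flatness of the $(q+1)$-dimensional leaf closures you conclude that ``the tangent direction to these curves is a Killing field $X$ on $N$'', and then immediately take global coordinates with $X=\partial_x$ and identify $N=\mathbb R\times(0,+\infty)$. Neither assertion follows from what you have established. A foliation of $N$ by curves does not by itself carry an isometric flow; the paper obtains the flow from $\overline P^0$, the identity component of the closure of the image of $\pi_1(M)$ in $\mathrm{Sim}(N)$, which it must first show is abelian and acts by \emph{isometries} --- this is Lemma~\ref{lemmaP0Abelian} together with Lemma~\ref{lemmaNilpotent}, resting on nontrivial lattice theory (Proposition~\ref{propRaghu}). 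More importantly, even granted a one-parameter isometry group, your global coordinates require its action on $N$ to be \emph{free} (equivalently, $X$ nowhere vanishing and the group isomorphic to $\mathbb R$ rather than $\mathbb R/\mathbb Z$). This is precisely where $\dim N=2$ enters in a purely topological way: a flow on the plane with a closed orbit has a fixed point, and the first lemma of Section~\ref{sectDim2} runs a careful argument with curvature level sets to exclude fixed points of $\overline P^0$. Your proposal does not address freeness at all, and the subsequent ``global analysis'' identifying $N$ with $\mathbb R\times(0,+\infty)$ --- which in the paper occupies Lemmas~\ref{lemmeYZ} and~\ref{lemmaP}, including the argument that the transversal is a half-line because $p_2$ is a fixed-point-free similarity of ratio $\lambda\neq 1$ --- is asserted rather than proved. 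Two smaller points: the stable eigenvalues of $A$ have modulus $\lambda$ but need not equal $\lambda$ (only $A|_{E^s}=\lambda O$ with $O$ orthogonal is claimed), and your argument ruling out fibers of dimension $q$ is superfluous since Theorem~\ref{thmClosures} already gives $q<d<q+2$.
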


Theorem~\ref{thmDim2} gives a complete classification of the manifolds which correspond to the third case of Theorem~\ref{thmConformalStructure}, under the assumption that $\dim(N) = 2$. On the other hand, the manifolds corresponding to the first case (\emph{i.e.} flat manifolds) were classified by Fried~\cite{MR604714}.

\bigskip

\noindent\textbf{Fibration by flat tori.} In greater dimensions, the problem of classifying manifolds corresponding to the third case is still open, but we prove the following:

\begin{thm} \label{thmClosures}
In the third case of Theorem~\ref{thmConformalStructure}, consider the foliation $\tilde{\mathcal F}$ induced by the submersion $\tilde M \to N$, and $\mathcal F$ the foliation induced on $M$ by $\tilde{\mathcal F}$. Then $\mathcal F$ is a Riemannian foliation on $M$, and the closures of the leaves form a singular Riemannian foliation $\overline{\mathcal F}$ on $M$, such that each leaf of $\overline{\mathcal F}$ is a smooth manifold of dimension $d$ (which may depend on the leaf) with $q < d < q + n$, where $n = \dim(N)$. Moreover, on each leaf of $\overline{\mathcal F}$, there is a flat Riemannian metric which is compatible with the similarity structure of $M$.
\end{thm}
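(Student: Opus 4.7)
The plan is to prove Theorem~\ref{thmClosures} in four stages, matching its four assertions.

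First, I would show that $\mathcal F$ is Riemannian. The transverse model of $\tilde{\mathcal F}$ is $N$, and the deck transformations act transversally by similarities; hence $\mathcal F$ carries a \emph{transverse similarity structure} in the sense introduced earlier in the paper. Invoking the dichotomy already established---any foliation with a transverse similarity structure is either transversally flat or transversally Riemannian---and using that $N$ is non-flat in case~3 of Theorem~\ref{thmConformalStructure}, I would conclude that $\mathcal F$ is transversally Riemannian. Concretely, this gives a smooth positive function $\varphi$ on $N$ satisfying $\varphi \circ \gamma_2 = \varphi / \lambda(\gamma)^2$, such that $h := \varphi\, g_N$ is transversally $\pi_1(M)$-invariant and descends to a transverse Riemannian metric on $M$.

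With $\mathcal F$ now a genuine Riemannian foliation on a compact manifold, Molino's structure theorem applies: the closures of leaves form a singular Riemannian foliation $\overline{\mathcal F}$ with smooth leaves. Each leaf of $\overline{\mathcal F}$ lifts in $\tilde M = \mathbb R^q \times N$ to a saturated set of the form $\mathbb R^q \times \Sigma$, where $\Sigma \subset N$ is the closure of an orbit of the transverse holonomy pseudogroup $\Gamma = \{\gamma_2 : \gamma \in \pi_1(M)\}$; smoothness of leaves of $\overline{\mathcal F}$ is equivalent to $\Sigma$ being a smooth submanifold of $N$.

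The dimension bounds $q < d < q+n$ say that no $\Sigma$ is $0$-dimensional (which would correspond to a closed leaf of $\mathcal F$) and no $\Sigma$ equals $N$ (a dense leaf). For the lower bound, a discrete $\Sigma$ would give a (local) fibration of $M$ onto the compact quotient $N/\Gamma$, forcing $(N,h)$ to be complete; combined with the scaling identity for $\varphi$ along a contracting similarity in $\pi_1(M)$, this yields a contradiction with the non-completeness of $g_N$ encoded in case~3. For the upper bound, pick $\gamma \in \pi_1(M)$ with $\lambda(\gamma) \neq 1$, which exists since $M$ is not globally Riemannian. A dense orbit $\Gamma \cdot y$, together with $\varphi \circ \gamma_2 = \varphi / \lambda(\gamma)^2$, would allow one to extract a sequence $\gamma_i \cdot y$ converging in $N$ for which $\varphi(\gamma_i \cdot y) \to 0$, contradicting the positivity of $\varphi$; engineering such sequences from the dynamics of $\gamma_2$ and the density of the orbit is the delicate step.

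Finally, for the flat metric on each leaf of $\overline{\mathcal F}$: Molino's theory equips $\Sigma$ with the action of a structural Lie algebra by local Killing fields of $h$ with dense orbits, giving $\Sigma$ a homogeneous Riemannian structure; together with the Euclidean metric on the $\mathbb R^q$ fibers, this produces a candidate metric on $\mathbb R^q \times \Sigma$. The crux of this last step, which I expect to be the main obstacle, is twofold: first, verifying that the transverse piece is actually flat rather than merely homogeneous, which should follow from the special structure of the transverse action (inherited from the similarity structure on $M$); second, showing that the resulting product metric is $\pi_1(M)$-invariant---the scaling by $\lambda(\gamma)$ on the $\mathbb R^q$ factor being exactly compensated by the similarity on $\Sigma$, tracked through the conformal factor $\varphi$---so that it descends to a well-defined flat metric on $\overline L \subset M$ compatible with the similarity structure.
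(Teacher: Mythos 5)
Your first two steps match the paper: $\mathcal F$ inherits a transverse similarity structure, Theorem~\ref{thmFoliations} together with the non-flatness of $N$ makes it transversally Riemannian, and Molino's theory then gives the singular Riemannian foliation $\overline{\mathcal F}$ with smooth leaf closures. But essentially everything after that --- which is the actual content of the theorem --- is missing. The paper's central step is to let $P \subseteq \mathrm{Sim}(N)$ be the projection of $\pi_1(M)$, identify the lift of a leaf closure as $\mathbb R^q \times \overline P^0 x$ where $\overline P^0$ is the identity component of the closure $\overline P$, and then prove that $\overline P^0$ is an \emph{abelian} group of \emph{isometries} of $(N,g)$. This is done by lattice theory in Lie groups: $\pi_1(M)$ is a cocompact lattice in $\mathrm{Sim}(\tilde M)\cap(\mathrm{Sim}(\mathbb R^q)\times\overline P)$, and Bieberbach's theorem together with the Auslander--Raghunathan theorem on nilradicals forces the ratio part of $\overline P^0$ to be trivial and the remaining connected pieces to be nilpotent, then abelian. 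Flatness of the leaf closures is an immediate consequence (orbit of an abelian isometry group), and so is the upper bound $d < q+n$: if $d = q+n$, the flat set $\overline F$ would be open and closed, hence all of $M$, contradicting non-flatness. Your proposal replaces all of this with the remark that flatness ``should follow from the special structure of the transverse action''; homogeneity of $\Sigma$ under Molino's structural algebra is not enough (a homogeneous Riemannian manifold need not be flat), and you give no argument for why that algebra is abelian. This is precisely the main difficulty the theorem is about.

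Two further points. Your lower-bound argument does not work as stated: only the one orbit under consideration is assumed discrete, so there is no fibration onto a compact quotient of $N$, and in any case $(N,h)$ \emph{is} complete (that is exactly how $\overline P$ becomes a Lie group acting properly), so no contradiction with completeness is available. The paper instead proves that $\pi_1(M)$ acts freely on $N$ (a separate lemma using a conjugation and properness argument), so a leaf is homeomorphic to $\mathbb R^q$ and cannot be compact, whereas $d=q$ would force the leaf to be closed, hence compact. Your upper-bound sketch (extracting $\gamma_i y$ convergent in $N$ with $\varphi(\gamma_i y)\to 0$) is also not carried out --- the contracting direction of a similarity pushes points toward the incomplete end of $(N,g)$, so there is no reason such a sequence converges in $N$ --- and it is superseded by the flatness argument anyway. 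Finally, compatibility with the similarity structure is not a matter of compensating scalings between the two factors: the paper shows that an element of $\pi_1(M)$ of ratio $\neq 1$ cannot stabilize $\mathbb R^q\times\overline P^0 x$ at all (it would have a fixed point by completeness and the Banach fixed point theorem, contradicting freeness), so the stabilizer of a leaf closure acts by honest isometries and the metric $g$ itself descends.
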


A Riemannian foliation is a foliation which has a Riemannian structure on its transversal which is compatible with the foliation: the reader may refer to~\cite{MR932463} for the general theory. The situation described in Theorem~\ref{thmClosures} induces a fibration with singularities, where the fibers are the leaves of $\overline{\mathcal F}$ (in particular, there is a dense open set of $M$ which is a nonsingular fibration). To show that $\overline{\mathcal F}$ is a nonsingular fibration as in Theorem~\ref{thmDim2}, one would need to show that $\overline{\mathcal F}$ is nonsingular (\emph{i.e.} all the closures of the leaves of $\mathcal F$ have the same dimension), and that the leaf space of $\mathcal F$ is a smooth manifold. These questions are still open.

The closures of the leaves of a Riemannian foliation are always submanifolds (see~\cite{MR932463}), so the main difficulty in Theorem~\ref{thmClosures} is to prove that the closures of the leaves are flat. Moreover, we show that they have a structure of Riemannian manifold. This implies in particular that the closures of each leaf is finitely covered by a torus (by Bieberbach's theorem: see Theorem~\ref{thmBieberbach}). In fact, we show a more precise result:

\begin{thm} \label{thmTori}
In the setting of Theorem~\ref{thmClosures}, there exists a finite covering $M' \to M$ with the following property: considering the foliation $\mathcal F'$ induced on $M'$ by $\mathcal F$, the closures of the leaves of $\mathcal F'$ are flat tori.
\end{thm}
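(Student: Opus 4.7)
My plan is to combine Bieberbach's theorem, applied leafwise, with the structural theory of compact Riemannian foliations (Molino) in order to produce a single finite cover of $M$ that simultaneously trivialises the rotational part of the fundamental group of every leaf closure.

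First, by Theorem~\ref{thmClosures}, every leaf closure $\overline{L}$ of $\overline{\mathcal F}$ is a compact flat Riemannian manifold of some dimension $d_L$ with $q < d_L < q+n$. Bieberbach's first theorem then supplies, inside $\pi_1(\overline{L})$, a canonical finite-index normal subgroup $\Lambda_L$ consisting of the pure translations in the Euclidean universal cover $\mathbb R^{d_L}$, whose finite quotient $H_L = \pi_1(\overline{L})/\Lambda_L$ embeds in $O(d_L)$; the intermediate cover of $\overline{L}$ associated to $\Lambda_L$ is already a flat torus. The remaining task is to assemble the subgroups $\Lambda_L$, one for each leaf, into a single finite-index subgroup of $\pi_1(M)$.

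For this globalisation, I would exploit two finiteness facts. On one hand, $d_L$ takes only finitely many integer values, and Bieberbach's second and third theorems bound the isomorphism classes of pairs $(\Lambda_L, H_L)$ by a finite list. On the other hand, the Riemannian nature of $\mathcal F$ together with Molino's structural theorem imply that, stratum by stratum, the point groups $H_L$ vary locally constantly, and are all conjugate inside a compact Lie group controlling the transverse structure of $\overline{\mathcal F}$. Encoding these rotational holonomies should then produce a homomorphism $\phi \colon \pi_1(M) \to G$ onto a finite group $G$; setting $\Gamma = \ker \phi$ and $M' = \tilde M/\Gamma$ yields a finite cover of $M$ in which every loop acts on each leaf closure of the lifted foliation $\mathcal F'$ by pure translations, so each such closure is a flat torus.

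The main obstacle will be the coherent definition of $\phi$: one must identify the point groups of different leaf closures in a way compatible with the action of $\pi_1(M)$ on $\tilde M = \mathbb R^q \times N$ by similarities, and also extend the picture across the singular stratum of $\overline{\mathcal F}$ (where closures jump in dimension). I expect this to be handled by the rigidity of Bieberbach lattices, which admit no nontrivial deformations inside flat Riemannian structures, combined with the fact that the transverse holonomy of a Riemannian foliation on a compact manifold is governed by a single compact group, so that the translation-versus-rotation decomposition of each $\pi_1(\overline{L})$ varies continuously with $L$ and can be globalised to a finite quotient of $\pi_1(M)$.
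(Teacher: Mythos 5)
Your first step (leafwise Bieberbach) is fine and agrees with what the paper has already established: each leaf closure is a compact flat Riemannian manifold, hence finitely covered by a torus. The genuine gap is the globalisation, i.e.\ the construction of the homomorphism $\phi$. The point group $H_L$ is a quotient of $\pi_1(\overline L)$, and in the present setting $\pi_1(\overline L)$ is realised as a \emph{subgroup} $S_x = \{p \in \pi_1(M) : p\cdot x \in \overline P^0 x\}$ of $\pi_1(M)$, a different subgroup for each leaf closure. A quotient of a subgroup does not extend to a homomorphism of the ambient group, and neither Molino's structure theorem nor the finiteness statements in Bieberbach's second and third theorems produce a single finite quotient of $\pi_1(M)$ restricting to $H_L$ on every $S_x$; ``the point groups are all conjugate inside a compact Lie group'' is not a map defined on $\pi_1(M)$. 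Moreover, there is no reason to expect a finite quotient at all: what is actually needed is only a finite-index subgroup $J \le \pi_1(M)$ such that $J \cap S_x$ consists of translations for every $x$, which is weaker than, and obtained differently from, the kernel of a surjection onto a finite group.

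The paper closes exactly this gap by replacing your family of leafwise point groups with a single global object: the normal abelian lattice $\Gamma_0 = \pi_1(M) \cap \left(\mathrm{Sim}(\mathbb R^q)\times \overline P^0\right)$ (normal because $\overline P^0$ is normal in $\overline P$; abelian and contained in $\mathbb R^q \times \overline P^0$ as a lattice by Lemmas~\ref{lemmaP0Abelian}, \ref{lemmaFreeAction} and~\ref{lemmaGamma0}). Conjugation gives a representation $\rho : \pi_1(M) \to \mathrm{Aut}(\Gamma_0) \subseteq GL_m(\mathbb Z)$, and Selberg's lemma yields a finite-index subgroup $J$ with $\rho(J)$ torsion-free. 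The decisive observation missing from your proposal is that every element $a$ of a stabiliser $S_x$ is \emph{virtually} a translation: after composing with a suitable $t \in \mathbb R^q \times \overline P^0$ it acquires a fixed point, so by properness (Proposition~\ref{propProperAction}) some power $(ta)^n$ lies in $\mathbb R^q\times\overline P^0$, hence $\rho(a)$ is a torsion element; torsion-freeness of $\rho(J)$ then forces $a \in \mathrm{Sim}(\mathbb R^q)\times\overline P^0$ for every $a \in S_x \cap J$, and each leaf closure in $M' = \tilde M / J$ becomes a compact quotient of the abelian group $\mathbb R^q \times \overline P^0$ by a lattice of translations, i.e.\ a flat torus. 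Without this ``virtually a translation'' step and Selberg's lemma (or some substitute for them), your $\phi$ cannot be defined, so the proposal as it stands does not prove the theorem.
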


Thus, $M$ has a finite covering which is a fibration by tori with singularities. We do not know whether it is always possible to choose $M' = M$ in Theorem~\ref{thmTori}. However, in the special case where $q = 1$, the answer is positive, as a consequence of the following:
\begin{thm}[Carrière, 1984, \cite{MR755161}] \label{thmCarriere}
On a compact manifold, if $\mathcal F$ is a foliation of dimension $1$ endowed with a transverse Riemannian structure, then the closures of the leaves are tori.
\end{thm}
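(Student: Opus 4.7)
The plan is to apply Molino's structure theorem for Riemannian foliations on compact manifolds, thereby reducing the claim to the classical fact that the closure of a one-parameter subgroup in a compact Lie group is a torus.

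First, I would lift $\mathcal F$ to the transverse orthonormal frame bundle $\pi : E \to M$, which is a principal $O(q)$-bundle (with $q$ the codimension of $\mathcal F$). The lifted foliation $\hat{\mathcal F}$ is still of dimension one and is transversely parallelizable, so Molino's theorem for transversely parallelizable foliations on a compact manifold applies: the closures of leaves of $\hat{\mathcal F}$ are the fibers of a locally trivial fibration $E \to W$, and each closure $\bar{\hat L}$ carries a simply transitive action of a connected Lie group $G$, the \emph{structural Lie group} of $\mathcal F$. In particular $\bar{\hat L} \cong G$.

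Next, since $\hat{\mathcal F}$ is one-dimensional, the leaf $\hat L$ is the orbit of a one-parameter subgroup of $G$, and the density of $\hat L$ in $\bar{\hat L}$ translates into the density of this subgroup in $G$. The closure of a one-parameter subgroup in a compact Lie group is always abelian, so $G$ is abelian; being also connected and compact, $G$ is a torus, and so is $\bar{\hat L}$.

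Finally, I would descend back to $M$. The fibers of $\pi|_{\bar{\hat L}}$ are the orbits of a closed subgroup $H \subseteq O(q)$ acting freely on $\bar{\hat L}$, and this $H$-action commutes with the $G$-action (because $G$ is generated by transverse Killing fields, which are $O(q)$-invariant by construction). For a torus carrying a free commuting Lie group action, the acting group must be realised as a subtorus; hence $\bar L \cong G/H$ is itself a torus. The main obstacle is this last descent step, which requires careful use of the $O(q)$-equivariance intrinsic to Molino's construction in order to identify $H$ with a subtorus of $G$; the remainder is a direct application of Molino's structural results combined with classical Lie theory.
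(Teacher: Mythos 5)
The paper does not actually prove this statement: it quotes it from Carrière's \emph{Flots riemanniens} and uses it as a black box, so your attempt can only be compared with Carrière's original argument. Your overall strategy — lift to the transverse orthonormal frame bundle, invoke Molino's structure theorem for transversally parallelizable foliations, then descend — is indeed the skeleton of that proof. But there is a genuine gap at the central step.

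Molino's theorem does \emph{not} say that a leaf closure $\bar{\hat L}$ of the lifted foliation carries a simply transitive action of the structural Lie group $G$, nor that $\bar{\hat L} \cong G$. It says that the restriction of $\hat{\mathcal F}$ to $\bar{\hat L}$ is a Lie $\mathfrak g$-foliation with dense leaves; the group $G$ has dimension $\dim \bar{\hat L} - 1$ (the codimension of the one-dimensional foliation inside its own closure), it is usually taken simply connected, and it is typically \emph{non-compact}. The simplest example already refutes the identification: for an irrational linear flow on $T^2$ the leaf closure is $T^2$ while the structural group is $\mathbb R$. Consequently the reduction to ``the closure of a one-parameter subgroup of a compact Lie group is a torus'' is unavailable — there is no compact group acting on $\bar{\hat L}$ at this stage. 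The real content of Carrière's theorem is precisely the nontrivial lemma that the structural Lie algebra of a one-dimensional Lie foliation with dense leaves on a compact manifold is \emph{abelian}; Carrière proves this by analyzing the developing map from the universal cover of $\bar{\hat L}$ to $G$ (a fibration with fiber $\mathbb R$) together with the holonomy morphism $\pi_1(\bar{\hat L}) \to G$, whose image is dense and finitely generated, and by a growth/solvability argument excluding every non-abelian $G$. Only then does one obtain $\bar{\hat L} \cong T^{k+1}$ with a linear flow. Your final descent step also needs repair: a free action of a non-connected compact group on a torus can yield a Klein bottle, so one must instead use that the closure of the flow in $\mathrm{Diffeo}(\bar L)$ is a compact connected abelian group acting transitively on $\bar L$. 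In short, the architecture is right, but the key lemma — abelianness of the structural algebra — is missing, and the compactness you invoke to replace it is false.
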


\subsection{De Rham decomposition}

A de Rham decomposition of a connected Riemannian manifold $(M, g)$ is a family $(M_0, g_0), (M_1, g_1), \ldots, (M_k, g_k)$ of Riemannian manifolds ($k \geq 0$), where $M_0$ is flat, while $M_1, \ldots, M_k$ are non-flat manifolds which have irreducible holonomy, such that:
\[ (M, g) = (M_0, g_0) \times (M_1, g_1) \times \ldots \times (M_k, g_k). \]

The de Rham decomposition theorem~\cite{MR0052177} states the following (see also~\cite{MR0152974}):

\begin{thm}[de Rham, 1952] \label{thmDeRham}
\begin{enumerate}
\item If a connected Riemannian manifold $(M, g)$ admits a de Rham decomposition, then it is unique up to the order of the factors.
\item (Local version.) Any point of a Riemannian manifold has a neighborhood which admits a de Rham decomposition.
\item (Global version.) Every \emph{complete}, \emph{simply connected}, connected Riemannian manifold admits a de Rham decomposition.
\end{enumerate}
\end{thm}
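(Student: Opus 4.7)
The plan is to derive all three statements from a single algebraic fact --- complete reducibility of orthogonal representations --- transported along paths by the Levi-Civita connection. Fix a point $x \in M$ and consider the holonomy group $H = \mathrm{Hol}_x(\nabla)$, which acts orthogonally on $(T_xM, g_x)$. Such a representation splits as $T_xM = V_0 \oplus V_1 \oplus \cdots \oplus V_k$, where $V_0$ is the subspace fixed pointwise by $H$ and each $V_i$ for $i \geq 1$ is $H$-irreducible of positive dimension; this splitting is unique up to permutation of the irreducible factors and provides the infinitesimal model of the desired decomposition.

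For the local version, work in a simply-connected neighborhood $U$ of $x$. Parallel transport is path-independent on $U$, so each $V_i$ extends to a smooth $\nabla$-parallel distribution $D_i$ on $U$. Because $\nabla$ is torsion-free, any parallel distribution is automatically involutive: if $X$ and $Y$ are parallel sections of $D_i$, then $[X,Y] = \nabla_X Y - \nabla_Y X$ remains in $D_i$. Hence each $D_i$ integrates to a foliation $\mathcal{F}_i$. In coordinates adapted to the $\mathcal{F}_i$, pairwise orthogonality of the $D_i$ combined with parallelism of the metric forces the cross terms of $g$ to vanish, yielding the local Riemannian product.

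For the global version, assume $M$ complete and simply connected. Simple connectedness makes parallel transport globally path-independent, so the distributions $D_0, \ldots, D_k$ extend to all of $M$ and integrate to global foliations whose leaves are totally geodesic; completeness then gives that each leaf $L_i$ through $x$ is itself a complete Riemannian manifold in the induced metric. Define $\Phi : L_0 \times L_1 \times \cdots \times L_k \to M$ by following a broken geodesic that first travels inside $L_0$, then inside the leaf of $\mathcal{F}_1$ through the current point, and so on. The hard part --- the heart of de Rham's argument --- is to show that $\Phi$ is well-defined (independent of the order of traversal), a global diffeomorphism, and an isometry: local commutativity of the ``travel in $\mathcal{F}_i$ then in $\mathcal{F}_j$'' operators follows from the parallelism of the $D_i$, and one propagates this to a global statement using simple connectedness, while completeness supplies surjectivity.

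Uniqueness is read off from the tangent space at any point: any de Rham decomposition $M = M_0 \times \cdots \times M_k$ induces an orthogonal, restricted-holonomy-invariant splitting $T_xM = \bigoplus_i T_xM_i$, in which $T_xM_0$ is the fixed subspace (since $M_0$ is flat) and the remaining $T_xM_i$ are irreducible. Since such a decomposition of an orthogonal representation is unique up to permutation of its irreducible factors, and since each $M_i$ is recovered globally as a leaf of the corresponding parallel foliation, the decomposition of $M$ is unique up to reordering.
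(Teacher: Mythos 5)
The paper offers no proof of this statement to compare against: Theorem~\ref{thmDeRham} is quoted as de Rham's classical theorem, with the proof delegated to the cited references. Your outline follows the standard classical route, but two steps are wrong as written. First, parallel transport is \emph{not} path-independent on a simply connected neighborhood $U$ --- that would force the connection to be flat there, since the holonomy of a small contractible loop is governed by the curvature. What is true, and what actually makes $D_i$ well-defined, is that the \emph{subspace} $V_i$ is carried to the same subspace of $T_yM$ by any two paths from $x$ to $y$ in $U$, because the two transports differ by the holonomy of a contractible loop, which preserves $V_i$; individual vectors are transported in a path-dependent way.

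Second, and more seriously, the uniqueness argument rests on the claim that a decomposition of an orthogonal representation into the fixed subspace plus irreducibles is unique up to permutation. This is false in general: whenever a nontrivial irreducible occurs with multiplicity greater than one (e.g.\ $SO(2)$ acting diagonally on $\mathbb R^2 \oplus \mathbb R^2$), there are infinitely many invariant irreducible summands. De Rham's uniqueness requires the additional geometric input that the holonomy group of a Riemannian product decomposes as a direct product of normal subgroups, each acting trivially on the tangent spaces of the other factors; only with this in hand can one show that any invariant irreducible subspace not contained in $V_0$ must equal one of the $V_i$ (this is the key lemma in the Kobayashi--Nomizu treatment). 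As presented, part (1) is not proved. The remainder of the sketch --- involutivity from torsion-freeness, total geodesy and completeness of the leaves, and the broken-geodesic map $\Phi$ --- is the standard argument, and you rightly identify the well-definedness and bijectivity of $\Phi$ as the heart of the global statement, but you do not supply it.
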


Notice that the universal cover of a manifold $M$ endowed with a similarity structure (which is not globally Riemannian) is never complete: otherwise, the similarities which are not isometries would have a fixed point (by the Banach fixed point theorem), but $\pi_1(M)$ needs to act freely on $\tilde M$. However, Theorem~\ref{thmConformalStructure} states that $\tilde M$ does admit a de Rham decomposition. More precisely, Theorem~\ref{thmConformalStructure} may be rephrased as follows:

\begin{thm}
If $(\tilde M, g)$ is the universal cover of a compact, connected manifold $M$ endowed with a similarity structure, then $(\tilde M, g)$ admits a de Rham decomposition. Furthermore, the number of factors in the decomposition is at most $2$: if it is exactly two, then one of the factors is the Euclidean space.
\end{thm}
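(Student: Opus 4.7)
The plan is to view this statement as a direct reformulation of Theorem~\ref{thmConformalStructure} in the language of de Rham decompositions, under the same implicit hypothesis that $\pi_1(M)$ is not contained in $\mathrm{Isom}(\tilde M)$ (without that hypothesis the bound ``at most two factors'' would obviously fail, e.g.\ for products of several compact irreducible Riemannian manifolds whose universal covers decompose into many factors). Each of the three alternatives of Theorem~\ref{thmConformalStructure} will yield the claimed decomposition essentially for free.

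In Case~1, where $\tilde M$ is flat, the trivial decomposition $(\tilde M, g) = (M_0, g_0)$ with $k = 0$ is already a de Rham decomposition (one flat factor, no irreducible non-flat factor).

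In Case~2, where $\tilde M$ has irreducible holonomy and $\dim \tilde M \geq 2$, I first check that $\tilde M$ cannot be flat: since $\tilde M$ is simply connected, flatness would force its holonomy group to be trivial, and a trivial holonomy group in dimension at least two is reducible, contradicting the hypothesis. One therefore obtains a de Rham decomposition in which $M_0$ is reduced to a point and $(M_1, g_1) = (\tilde M, g)$ is a single non-flat irreducible factor. In Case~3, the product structure $\tilde M = \mathbb R^q \times N$ \emph{is} already the de Rham decomposition: take $(M_0, g_0) = \mathbb R^q$ (Euclidean, hence flat) and $(M_1, g_1) = N$ (non-flat with irreducible holonomy). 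This yields exactly two factors, one of which is the Euclidean space, as required. Uniqueness of the decomposition up to order of the factors is provided in each case by Theorem~\ref{thmDeRham}(1).

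The main obstacle: essentially all the work of this theorem is packaged inside Theorem~\ref{thmConformalStructure}; the present statement is a routine unfolding of the definition of a de Rham decomposition. The only genuine (if elementary) verification is the simply-connected-plus-irreducibility argument that rules out a flat factor in Case~2, ensuring the decomposition really has the advertised form.
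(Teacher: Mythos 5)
Your proposal is correct and matches the paper's treatment: the paper offers no separate proof of this statement, presenting it explicitly as a rephrasing of Theorem~\ref{thmConformalStructure}, and your case-by-case unfolding (flat $\Rightarrow$ $k=0$; irreducible $\Rightarrow$ one non-flat factor; $\mathbb{R}^q \times N$ $\Rightarrow$ two factors, one Euclidean) is exactly the intended reading. You were also right to note that the hypothesis that $\pi_1(M) \not\subseteq \mathrm{Isom}(\tilde M)$ is implicitly carried over from Theorem~\ref{thmConformalStructure}, since the two-factor bound fails without it.
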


In this paper, we also prove a new version of de Rham's decomposition theorem in a more general framework.
\begin{definition}
A locally metric connection on a manifold $M$ is a torsion-free connection $\nabla$ which lifts to a connection $\tilde \nabla$ on the universal cover such that $\tilde \nabla$ is the Levi-Civita connection of a Riemannian metric $g$. Equivalently, a locally metric connection is a torsion-free connection whose restricted holonomy group $\mathrm{Hol}^0 (\nabla)$ (\emph{i.e.} the subgroup of $\mathrm{Hol} (\nabla)$ given by parallel transport along loops which are homotopic to a constant) is a relatively compact subgroup of $GL_n(\mathbb R)$.
\end{definition}

\begin{exemple}
If $M$ is a manifold endowed with a similarity structure, the Levi-Civita connection $\tilde \nabla$ of the Riemannian metric $g$ on $\tilde M$ induces a locally metric connection $\nabla$ on $M$. The locally metric connections obtained in this way are exactly those which preserve a conformal structure (see~\cite{MR3491885} for more details).
\end{exemple}

Unlike similarity structures, locally metric connections behave well with respect to the product structure: if $(M_1, \nabla_1)$ and $(M_2, \nabla_2)$ are two manifolds endowed with locally metric connections, then the product connection $(\nabla_1, \nabla_2)$ is again a locally metric connection on $M_1 \times M_2$, but this connection is not given by a similarity structure. In this paper, we will prove the following generalization of Theorem~\ref{thmDeRham}:

\begin{thm} \label{thmLocallyMetric}
Consider a compact connected manifold $(M, \nabla)$, where $\nabla$ is a locally metric connection, and a Riemannian metric $g$ on its universal cover $(\tilde M, \tilde \nabla)$ such that $\tilde \nabla$ is the Levi-Civita connection of $g$. Then $(\tilde M, g)$ admits a de Rham decomposition.
\end{thm}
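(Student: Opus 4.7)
The approach is to reduce to a global product decomposition of $(\tilde M, g)$ by using the compact quotient $M$ to compensate for the incompleteness of $\tilde M$. Fix a basepoint $x_0 \in \tilde M$. Since $\mathrm{Hol}^0(\tilde \nabla)$ is relatively compact and sits inside $O(T_{x_0}\tilde M, g_{x_0})$, it decomposes $T_{x_0}\tilde M$ into a flat summand $E_0$ and irreducible orthogonal summands $E_1, \ldots, E_k$. Parallel transport by $\tilde\nabla$ extends this to $\tilde\nabla$-parallel, integrable distributions $D_0, D_1, \ldots, D_k$ on $\tilde M$ with totally geodesic leaves, and by the local version of de Rham's theorem $\tilde M$ is locally isometric to the Riemannian product of these leaves. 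Since $\tilde\nabla$ is the lift of $\nabla$, the group $\pi_1(M)$ acts on $\tilde M$ by $\tilde\nabla$-affine maps and hence permutes the $D_i$ (respecting dimensions). After passing to a finite cover $M' \to M$ corresponding to the kernel of this permutation representation, each $D_i$ is $\pi_1(M')$-invariant and descends to a foliation $\mathcal F_i$ on $M'$.

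Now I would analyze the transverse structure. Any two Riemannian metrics sharing the Levi-Civita connection $\tilde\nabla$ differ by a constant factor on each irreducible holonomy summand, so every $\gamma \in \pi_1(M')$ satisfies $\gamma^* g|_{D_i} = c_i(\gamma)\, g|_{D_i}$ for some constant $c_i(\gamma) > 0$. This means the complementary foliation $\mathcal F_i^{\perp}$ with leaves tangent to $\bigoplus_{j \neq i} D_j$ carries a transverse similarity structure on $M'$: a transverse Riemannian metric well-defined up to a global positive scalar. Invoke the dichotomy announced in the abstract, namely that a foliation with a transverse similarity structure is either transversally flat or transversally Riemannian. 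For $i \geq 1$ the distribution $D_i$ is irreducible and non-flat, which excludes the transversally flat case and forces $c_i(\gamma) = 1$ for all $\gamma$. Hence $\pi_1(M')$ acts by genuine isometries on the leaf metrics of the non-flat irreducible pieces.

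With the scale pinned down, completeness follows easily: a geodesic in $M'$ tangent to $\mathcal F_i$ stays in its leaf and is defined for all time by compactness of $M'$; lifting this to $\tilde M$ and using the isometric action, every leaf of $D_i$ through $x_0$ is geodesically complete as a Riemannian manifold for $i \geq 1$. A standard Kobayashi--Nomizu-style argument, using the local product structure together with the completeness of the non-flat leaves and the simple connectedness of $\tilde M$, then produces an isometry $\tilde M \cong L_0 \times L_1 \times \cdots \times L_k$, where $L_i$ ($i \geq 1$) is the complete, simply connected, non-flat leaf of $D_i$ through $x_0$ with irreducible holonomy, and $L_0$ is the simply connected flat leaf of $D_0$ through $x_0$, which is a possibly incomplete open subset of a Euclidean space.

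The main obstacle is the dichotomy invoked in the middle step: proving that a transverse similarity structure on a foliation of a compact manifold is either transversally flat or transversally Riemannian, and thereby excluding non-trivial scaling by the deck group on non-flat irreducible factors. Without this input, the deck action could genuinely rescale the metric on each non-flat factor and one would not have any globally defined Riemannian metric on these leaves, which would in turn block the completeness argument. Once the dichotomy is available, the global product decomposition is a fairly routine adaptation of the classical proof of de Rham's theorem, with the only real subtlety being that the flat factor $L_0$ may be incomplete, which is permitted by the definition of de Rham decomposition used in this paper.
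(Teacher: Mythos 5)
Your overall architecture (decompose $T_{x_0}\tilde M$ under the relatively compact restricted holonomy, pass to a finite cover so the deck group preserves the factors, observe that the foliation complementary to each irreducible summand carries a transverse similarity structure, and invoke the flat-or-Riemannian dichotomy of Theorem~\ref{thmFoliations}) is exactly the paper's, up to the point where you use the dichotomy. There the argument breaks: from ``$\mathcal F_i^{\perp}$ is transversally Riemannian'' you conclude that $c_i(\gamma)=1$ for all $\gamma$, i.e.\ that the \emph{original} metric $g|_{D_i}$ is deck-invariant. That is a non sequitur: the dichotomy asserts the existence of \emph{some} invariant transverse metric $h_T$, not that the given similarity structure has all ratios equal to $1$. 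The paper's Example~\ref{exThird} (equivalently, Case~3 of Theorem~\ref{thmConformalStructure}) is a direct counterexample to your claim: there $\tilde M=\mathbb R^q\times N$ with $N$ non-flat irreducible, the deck group rescales $g|_{TN}$ by $\lambda^2\neq 1$, and $N$ is \emph{not} complete. So both your conclusion ``$\pi_1(M')$ acts by genuine isometries on the non-flat leaf metrics'' and the ensuing completeness of the leaves $L_i$ for $g$ are false, and the final Kobayashi--Nomizu-style product argument has nothing to stand on. (Your completeness argument is also shaky on its own terms: an affine geodesic on a compact manifold need not be defined for all time --- see the Hopf manifold of Example~\ref{exFlat} --- so you really do need an invariant leafwise \emph{metric}, which is precisely what is missing.)

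The paper's fix is the step you skipped: take the invariant transverse metric $h_T$ produced by Theorem~\ref{thmFoliations} and build a \emph{hybrid} metric $h$ on $\tilde M$ which equals $\tilde g$ on $\tilde E''$, equals the pullback of $h_T$ on $\tilde E'$, and makes the two distributions orthogonal. This $h$ is still locally a product adapted to the same pair of parallel foliations, and the leaves of $\tilde{\mathcal F}'$ are complete \emph{for $h$} by a uniform-$\epsilon_0$ compactness argument (Lemma~\ref{epsilon}). One then applies Theorem~\ref{thmPonge} (Ponge--Reckziegel), which requires completeness of only \emph{one} of the two families of leaves --- this is essential, since the other family may well be incomplete --- to get a global diffeomorphic splitting $\tilde M\cong \tilde M'\times\tilde M''$ compatible with the foliations; since $g$ is locally a product adapted to these same foliations, $g$ is then globally a product metric, and one concludes by induction on the dimension. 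Without this change of metric, the non-flat factors cannot be made complete, and no version of the global de Rham theorem applies.
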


Again, it is important to notice that the metric $g$ on the universal cover is almost never complete, thus Theorem~\ref{thmLocallyMetric} is not a consequence of Theorem~\ref{thmDeRham}.

\subsection{Transverse similarity structures} \label{sectIntroFoliations}

The main tool in the proofs of Theorems~\ref{thmLocallyMetric} and~\ref{thmConformalStructure} is the study of transverse similarity structures on foliations. Such foliations may be seen as a particular case of (transversally) conformal foliations, or a generalization of (transversally) Riemannian foliations. The precise definition is given at the beginning of Section~\ref{sectFoliations}.

Our main result on transverse similarity structures is the following:
\begin{thm} \label{thmFoliations}
Let $(M, \mathcal F)$ be a compact foliated manifold endowed with a transverse similarity structure. Then one of the following two facts occurs:
\begin{enumerate}
\item The transverse similarity structure on the foliation $\mathcal F$ is flat (\emph{i.e.} the metric $g$ on the transversal $T$ is flat),
\item The foliation $\mathcal F$ is transversally Riemannian (\emph{i.e.} there exists a metric $h$ on the transversal $T$ such that the transition maps are isometries).
\end{enumerate}
\end{thm}

We prove Theorem~\ref{thmFoliations} in Section~\ref{sectFoliations}. Notice that we do not assume that the transverse similarity structure on the foliation is induced by a locally metric connection on $M$.

Foliations endowed with a \emph{flat} transverse similarity structure (\emph{i.e.} those which correspond to the first case of Theorem~\ref{thmFoliations}) were completely classified by Ghys~\cite{MR1125840} when $M$ has dimension $3$ and $\mathcal F$ has dimension $1$. See also~\cite{MR1191025} and~\cite{MR1611816} on this subject.

\bigskip \noindent \textbf{About the foliated Ferrand-Obata conjecture.} For transversally \emph{conformal} foliations, there is an analogue of Theorem~\ref{thmFoliations} (see \cite{MR2073842}):
\begin{thm}[Tarquini, 2004] \label{thmTarquini}
Any transversally analytic conformal foliation of codimension $\geq 3$, on a compact connected manifold, is either transversally Möbius or Riemannian.
\end{thm}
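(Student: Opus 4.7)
The plan is to follow the strategy sketched for Theorem~\ref{thmFoliations}, with similarities replaced by Möbius transformations. First I would pass to the holonomy pseudogroup $\Gamma$ acting by conformal transformations on the global transversal $T$. The codimension assumption $q \geq 3$ is crucial: by Liouville's theorem, every conformal map between open subsets of $\mathbb{R}^q$ with $q \geq 3$ is the restriction of a Möbius transformation. Hence each element of $\Gamma$ extends locally to an element of $\mathrm{Conf}(S^q)$, and the natural object to study is the transverse $(G,X)$-structure with $X = S^q$ and $G$ the Möbius group.

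Next I would dichotomize the dynamics of $\Gamma$ in analogy with how Theorem~\ref{thmFoliations} distinguishes the flat case (nontrivial ratio morphism) from the Riemannian case. Either $\Gamma$ preserves some Riemannian metric in the transverse conformal class on $T$, in which case we land in the Riemannian case of the theorem; or the action is \emph{essential}, meaning no such invariant metric exists. In the second case, the hope is to build a developing map from a simply connected cover of $T$ (in the sense of pseudogroups) into $S^q$ that is a local diffeomorphism with holonomy in the Möbius group, thereby producing a transversally Möbius structure on $\mathcal{F}$.

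To carry this out I would invoke a foliated analogue of the classical Ferrand-Obata theorem: a Riemannian manifold of dimension $\geq 3$ whose conformal group acts essentially is conformally equivalent to the round sphere $S^q$ or to Euclidean $\mathbb{R}^q$. Translated to pseudogroups, essentiality of $\Gamma$ should force each transverse chart to embed conformally into $S^q$, with transition maps restrictions of actual Möbius transformations; this matches the conclusion ``transversally Möbius''. The transverse analyticity is what lets one patch these local conclusions into a globally consistent developing map: real-analytic elements of $\Gamma$ are determined by their germ at a single point, so analytic continuation of chart maps along the leaves cannot branch inconsistently.

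The main obstacle, I expect, is precisely this translation of Ferrand-Obata to the pseudogroup setting. In the classical case one extracts from an essential action a divergent sequence whose conformal factors blow up at a point and applies normal-family arguments in a single model space. For a pseudogroup on a non-simply connected transversal, one must mimic these compactness arguments while tracking overlap domains, compositions of holonomy elements, and possible blow-up of conformal ratios along pseudo-orbits. Compactness of $M$ should supply the uniform control on return dynamics, and transverse analyticity should eliminate ambiguity in the extension step, but fusing these ingredients into a clean dichotomy is the technical heart of the argument.
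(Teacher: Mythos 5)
First, a point of reference: the paper does not prove this statement at all. Theorem~\ref{thmTarquini} is quoted as background from Tarquini's work~\cite{MR2073842}, so there is no in-paper proof to compare against; your proposal has to stand on its own.

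On its own terms, there is a genuine gap at the very first step. You invoke Liouville's theorem to conclude that ``each element of $\Gamma$ extends locally to an element of $\mathrm{Conf}(S^q)$.'' Liouville's theorem applies to conformal maps between open subsets of $\mathbb{R}^q$ equipped with the \emph{flat} conformal structure. Here the global transversal $T$ carries an arbitrary Riemannian metric $g$, and the holonomy maps are conformal with respect to the conformal class of $g$, which need not be conformally flat. So the holonomy elements have no reason to be restrictions of Möbius transformations, and the transverse $(G,X)$-structure with $X = S^q$ that you propose to study does not exist at the outset. Proving that the transverse conformal structure is conformally flat when the foliation is not transversally Riemannian is precisely the content of the theorem (it is the conformal analogue of showing, as in the proof of Theorem~\ref{thmFoliations}, that the Riemann tensor of $(T,g)$ vanishes everywhere once it vanishes somewhere); it cannot be taken as an input. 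A correct attack along the lines of the paper's Section~\ref{sectFoliations} would start from a conformally invariant tensor (the Weyl tensor for $q \geq 4$, the Cotton tensor for $q = 3$): where it is nonzero one can normalize the metric in its conformal class and hope for equicontinuity, and the hard case is showing that its zero set is all of $T$. Your sketch does not engage with this, and it explicitly defers the remaining step (the pseudogroup version of Ferrand--Obata, which is where the analyticity hypothesis actually enters and which remains open without it) as ``the technical heart.'' A proposal that postpones its central difficulty is an outline of a strategy, not a proof.
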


It is also believed that Theorem~\ref{thmTarquini} should be valid without the analyticity assumption: this is the \emph{foliated Ferrand-Obata conjecture}. Our Theorem~\ref{thmFoliations} implies the following:
\begin{cor}
The foliated Ferrand-Obata conjecture is true in the special case where the transverse conformal structure on the foliation is induced by a transverse similarity structure.
\end{cor}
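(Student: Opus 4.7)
The plan is to derive the corollary directly from Theorem~\ref{thmFoliations}. Let $(M, \mathcal F)$ be a compact foliated manifold, with codimension $n \geq 3$, whose transverse conformal structure is induced by a transverse similarity structure given by a metric $g$ on the transversal $T$. Theorem~\ref{thmFoliations} leaves only two possibilities: either $g$ is flat, or $\mathcal F$ admits a compatible transverse Riemannian metric $h$ (possibly not in the conformal class of $g$).

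In the transversally Riemannian case, the foliation falls directly into the second alternative of the foliated Ferrand-Obata conjecture, and there is nothing more to prove. In the flat case, I would argue that $g$ equips $T$ with a transverse Möbius structure. A flat Riemannian metric presents $T$ locally as open subsets of $(\mathbb R^n, g_{\mathrm{eucl}})$; composing with the stereographic embedding $\mathbb R^n \hookrightarrow S^n$ yields an atlas with values in $S^n$, and since $n \geq 3$ Liouville's theorem ensures that the transition functions between such charts are restrictions of Möbius transformations of $S^n$. The similarity group of Euclidean space is moreover precisely the stabilizer of $\infty$ in the Möbius group of $S^n$, so the transition maps $\gamma_{ij}$ of the foliation, being local similarities of $g$, act as local Möbius transformations in this atlas. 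Hence $\mathcal F$ is transversally Möbius, giving the first alternative of the conjecture.

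There is essentially no obstacle to overcome: Theorem~\ref{thmFoliations} does all the work, and the remainder is the formal observation that similarities of Euclidean space sit inside the Möbius group of the sphere, together with the classical identification of flat conformal structures with Möbius structures in dimension $\geq 3$. The only subtlety worth making explicit is that the transverse Möbius structure built in the flat case genuinely refines the original transverse conformal structure — an immediate consequence of the fact that similarities are conformal.
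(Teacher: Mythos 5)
Your argument is correct and is exactly the derivation the paper intends (the paper states the corollary as an immediate consequence of Theorem~\ref{thmFoliations} without writing out the details): the Riemannian alternative of Theorem~\ref{thmFoliations} gives the Riemannian alternative of the conjecture, and in the flat case the isometric charts into $\mathbb R^n \subset S^n$ together with the fact that Euclidean similarities are Möbius transformations fixing $\infty$ yield a transverse Möbius structure refining the conformal one. Your explicit appeal to Liouville's theorem is slightly more than needed (the transition maps are already similarities in isometric charts, hence manifestly Möbius), but this is the same proof.
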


\subsection{Structure of the paper}
We start by proving Theorem~\ref{thmFoliations} in Section~\ref{sectFoliations}. We use Theorem~\ref{thmFoliations} to prove Theorems~\ref{thmConformalStructure} and~\ref{thmLocallyMetric} in Section~\ref{sectEndProofs}. Then, we show Theorem~\ref{thmClosures} in Section~\ref{sectTori}, and use Theorem~\ref{thmClosures} to prove Theorem~\ref{thmDim2} in Section~\ref{sectDim2}.

\section{Foliations endowed with transverse similarity structures} \label{sectFoliations}

In this section, we prove Theorem~\ref{thmFoliations}.

If $(M,g)$ is a Riemannian manifold, its similarity pseudogroup $\mathrm{Sim}_\mathrm{loc}(M)$ consists of all $\phi : U \to V$ such that $\phi^*g = \lambda^2 g$, where $U$ and $V$ are open subsets of $M$, and $\lambda \in \mathbb R_{>0}$ is locally constant on $U$. For any $x \in M$, the number $\lambda(x)$ is called the \emph{ratio of $\phi$ at $x$} (if $M$ is connected, there is no need to specify the point $x$).

A foliation of a compact manifold $(M, \mathcal F)$ is a covering $(U_i)_{1 \leq i \leq r}$ with the following structure:
\begin{enumerate}
\item For each $i \in \{1, \ldots, r \}$, $U_i$ is diffeomorphic to $V_i \times T_i$, where $V_i$ is an open ball of $\mathbb R^p$ and $T_i$ an open ball of $\mathbb R^q$. This gives us natural projections $f_i: U_i \to T_i$. The disjoint union $T = \bigcup_i T_i$ is called the \emph{global transversal}.
\item There exist \emph{transition maps} which are diffeomorphisms $(\gamma_{ij})_{i,j} : f_i(U_i \cap U_j) \to f_j(U_i \cap U_j)$ such that $f_j = \gamma_{ij} \circ f_i$ on $U_i \cap U_j$.
\end{enumerate}

The pseudogroup $\Gamma$ spanned by the $(\gamma_{ij})$ is called the \emph{holonomy pseudogroup} of the foliation.

\begin{remarque}
In this paper, we use the notions of ``holonomy group'', from the theory of Riemannian manifolds, and ``holonomy pseudogroup'', from the theory of foliations: these two notions must not be confused.
\end{remarque}

A transverse similarity structure on the foliation $\mathcal F$ is a metric $g$ on the transversal $T$ such that the transition maps $\gamma_{ij}$ are local similarities (\emph{i.e.} belong to $\mathrm{Sim}_\mathrm{loc}(T)$). The foliation is said to be \emph{transversally Riemannian} (or simply \emph{Riemannian}) if it is possible to choose $g$ such that the $\gamma_{ij}$ are isometries.

A foliation is said to be \emph{equicontinuous} if there exists a Riemannian metric on the transversal such that its holonomy pseudogroup $\Gamma$ is equicontinuous. If the foliation has a transverse similarity structure, equicontinuity is equivalent to the existence of a constant $m > 1$ such that the ratio of any $\gamma \in \Gamma$ at any $x \in M$ lies in the interval $[1/m, m]$.

The following proposition, which is proved in~\cite{MR2078076}, is crucial in the proof of Theorem~\ref{thmFoliations}:

\begin{prop} \label{propEquicontinuous}
Any equicontinuous foliation endowed with a transverse similarity structure is Riemannian.
\end{prop}

Now, our first step in the proof is based on a trick which was described in~\cite{MR1889249}.

\begin{prop} \label{pseudodistance}
Let $(M, g)$ be a Riemannian manifold whose Riemann tensor $R$ does not vanish anywhere (\emph{i.e.} there is no $x \in M$ such that $R_x = 0$). Then $\mathrm{Sim}_\mathrm{loc}(M)$ preserves a smooth Riemannian metric.
\end{prop}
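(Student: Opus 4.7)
The plan is to build an invariant metric by multiplying $g$ by an appropriate power of the norm of the curvature, using the fact that on a Riemannian manifold the Levi-Civita connection depends on $g$ only up to a positive constant. Concretely, if $\phi \in \mathrm{Sim}_\mathrm{loc}(M)$ with $\phi^*g = \lambda^2 g$ (with $\lambda$ locally constant on $U$), then $\phi$ preserves the Levi-Civita connection of $g$ restricted to $U$ and therefore preserves the (1,3)-Riemann tensor $R$. The entire proof will just be a bookkeeping of conformal weights plus this one observation.

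First I would track how the $g$-norm of $R$ transforms. Writing
\[ \|R\|_g^2 = g^{ia}g^{jb}g^{kc}g^{ld}\, R_{ijkl}\, R_{abcd}, \]
with $R_{ijkl}=g_{im}R^{m}{}_{jkl}$, I pull back by $\phi$: the four inverse metrics contribute $\lambda^{-8}$, and lowering the first index of each Riemann factor contributes $\lambda^{4}$, since $\phi^*R^{m}{}_{jkl} = R^{m}{}_{jkl}$ by the invariance of the (1,3)-tensor. Hence $\phi^*\|R\|_g^2 = \lambda^{-4}\|R\|_g^2$, i.e.\ $\phi^*\|R\|_g = \lambda^{-2}\|R\|_g$. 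This is the only computation involved.

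Then I would set $h := \|R\|_g \cdot g$. Combining the two weights gives $\phi^*h = \lambda^{-2}\|R\|_g \cdot \lambda^2 g = h$, so $h$ is preserved by every element of $\mathrm{Sim}_\mathrm{loc}(M)$. Finally, because $R$ vanishes nowhere, $\|R\|_g$ is everywhere strictly positive; since $\|R\|_g^2$ is a smooth function of $g$ and $R$, taking a square root of a positive smooth function yields a smooth positive conformal factor, and $h$ is a smooth Riemannian metric.

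I do not foresee any real obstacle here; the hypothesis $R \neq 0$ everywhere is exactly what is needed to guarantee that the conformal factor $\|R\|_g$ is smooth and positive, and without it the construction would only produce a possibly degenerate symmetric $2$-tensor. Any subtlety would lie in justifying that the same $h$, defined locally via this formula, glues to a globally defined smooth metric, but this is immediate since the construction is canonical (it uses only $g$ and $R$, both of which are globally defined).
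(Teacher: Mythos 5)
Your proof is correct and is essentially the paper's own argument: both multiply $g$ by a norm of the curvature tensor, which transforms with conformal weight $\lambda^{-2}$ under a local similarity of ratio $\lambda$, so that the product with $g$ is invariant under $\mathrm{Sim}_\mathrm{loc}(M)$. The only difference is the choice of norm --- the paper takes the supremum of $\norm{R_x(u,v)w}_g$ over unit vectors $u,v,w$, while you take the full tensor contraction --- and your choice in fact makes the smoothness of the conformal factor more transparent, since the sup-norm is a priori only continuous.
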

\begin{proof}
If $R$ denotes the Riemann tensor, define $\norm{R}_g(x)$ as the supremum of the values of $\norm{R_x(u, v) w}_g$ when $u, v, w$ are vectors of $T_xM$ which have unit length for $g$. Notice that $\norm{R}_g(x)$ is finite for each $x \in M$, because the unit sphere of $T_xM$ for the metric $g$ is compact. Then the metric $\norm{R}_g g$ is invariant by $\mathrm{Sim}_\mathrm{loc}(M)$.
\end{proof} (

Thus, if $(M, \mathcal F)$ is a foliated manifold endowed with a transverse similarity structure, either $\mathcal F$ is Riemannian, or the Riemann tensor of $(T, g)$ vanishes somewhere. Our aim is to show that, in the last case, the Riemann tensor vanishes in fact everywhere.

Until the end of this section, we consider a \emph{compact, connected} foliated manifold $(M, \mathcal F)$ endowed with a transverse similarity structure. We consider a covering of $M$ by open sets $U_i$ which are diffeomorphic to $V_i \times T_i$, the projections $f_i$, the transversal $T$, the transition maps $\gamma_{ij}$, the holonomy pseudogroup $\Gamma$, and the metric $g$ on the transversal. This metric $g$ induces a distance $d_i$ on each $T_i$.

Intuitively, the holonomy pseudogroup may be defined as the set of local diffeomorphisms of the transversal obtained by ``sliding along the leaves''.
\begin{definition}
A piecewise $C^1$ path $c: [a,b] \to M$ is \emph{vertical} if for all $t_0 \in [a,b]$, for all $i \in \{1, \ldots, r\}$ such that $c(t_0) \in U_i$, the mapping $t \mapsto f_i(c(t))$ is constant in a neighborhood of $t_0$.
\end{definition}

The \emph{leaf} which contains $x \in M$ is defined as the set of all possible $c(t)$, where $c$ is a piecewise $C^1$ vertical path such that $c(0) = x$.

\begin{definition} \label{defHolonomyPath}
Consider a piecewise $C^1$ vertical path $c: [a,b] \to M$, and $i, j \in \{1, \ldots, r\}$ such that $c(a) \in U_i$, $c(b) \in U_j$. Define $x = f_i(c(a))$.

Choose a sequence of times $a = t_1 < \ldots < t_{p+1} = b$ and a sequence of indices $i_1, \ldots, i_p$, such that for all $l \in \{1, \ldots, p\}$ and all $t \in [t_l, t_{l+1}]$,
\[ c(t) \in U_{i_l}. \]

The \emph{holonomy germ} $\gamma$ from $T_i$ to $T_j$ at $x$ obtained by sliding along $c$ is defined as the germ at $x$ of the diffeomorphism \[ \gamma_{i_p j} \circ \gamma_{i_{p-1} i_p} \circ \dots \circ \gamma_{i_1 i_2} \circ \gamma_{i i_1}. \]
\end{definition}

The following two propositions are basic properties of holonomy pseudogroups: see for example Chapter~1 of~\cite{MR932463} for details.

\begin{prop} \label{lemmaSliding}
The \emph{holonomy germ} is well-defined: it depends only on the path $c$ and the choice of $i$ and $j$. In particular, it does not depend on the choice of a sequence of times $t_1, \ldots, t_{p+1}$ or the choice of a sequence of indices $i_1, \ldots, i_p$.
\end{prop}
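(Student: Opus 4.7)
The plan is to reduce the independence claim to the cocycle identity
\[ \gamma_{ik} = \gamma_{jk} \circ \gamma_{ij} \]
satisfied by the transition maps wherever all three are defined, combined with a standard common-refinement argument on the interval $[a,b]$. The geometry of the transversal plays no role here: everything will follow formally from the defining relations $f_j = \gamma_{ij} \circ f_i$ on $U_i \cap U_j$.

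First I would derive the cocycle identity. On $U_i \cap U_j \cap U_k$, one has both $f_k = \gamma_{jk} \circ \gamma_{ij} \circ f_i$ and $f_k = \gamma_{ik} \circ f_i$; since $f_i$ is a surjective submersion onto $f_i(U_i \cap U_j \cap U_k)$, the two diffeomorphisms $\gamma_{jk} \circ \gamma_{ij}$ and $\gamma_{ik}$ must coincide there. In particular $\gamma_{ii}$ is the identity where defined.

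Next, to compare two admissible choices $(t_l, i_l)_{l=1}^{p}$ and $(s_m, j_m)_{m=1}^{q}$, I would pass through a common refinement $a = r_1 < \ldots < r_{N+1} = b$ obtained by merging the two sets of cutting times. For each subinterval $[r_n, r_{n+1}]$, the hypothesis supplies admissible indices $i_{l(n)}$ and $j_{m(n)}$ with $c([r_n, r_{n+1}]) \subset U_{i_{l(n)}} \cap U_{j_{m(n)}}$. I would then establish two elementary moves: (a) inserting a new cutting time inside some $[t_l, t_{l+1}]$ while keeping the index $i_l$ on both new pieces only inserts a factor $\gamma_{i_l i_l} = \mathrm{id}$, hence does not alter the composition; (b) replacing the index used on a single piece $[r_n, r_{n+1}]$ by another admissible index $k$ changes a neighbouring product $\gamma_{i_l i'} \circ \gamma_{i'' i_l}$ into $\gamma_{k i'} \circ \gamma_{i'' k}$, and by the cocycle identity applied at $f_{i''}(c(r_n))$ both equal $\gamma_{i'' i'}$. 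Starting from $(t_l, i_l)$, I would refine to $(r_n)$ using (a), swap indices piece by piece using (b), then coarsen back to $(s_m, j_m)$ by applying (a) in reverse.

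The one point that requires genuine care is to verify that each application of the cocycle identity is legitimate, i.e.\ that the relevant point $f_{i''}(c(r_n))$ actually lies in $f_{i''}(U_{i''} \cap U_{i_l} \cap U_{i'})$ (and similarly for $k$). This is built into the definition of an admissible subdivision, since $c(r_n)$ belongs simultaneously to all of $U_{i''}, U_{i_l}, U_{i'}$ (and $U_k$ where relevant), as each of these charts covers a closed subinterval adjacent to $r_n$. The argument therefore reduces to careful bookkeeping; there is no analytic obstacle, and the main subtlety is simply keeping track of which indices are in play at each endpoint.
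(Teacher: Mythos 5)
The paper offers no proof of this proposition: it is stated as a standard fact about holonomy pseudogroups and referred to Chapter~1 of Molino's book. Your overall strategy --- derive the cocycle identity $\gamma_{ik}=\gamma_{jk}\circ\gamma_{ij}$ on triple overlaps, pass to a common refinement of the two subdivisions, and reduce to the two elementary moves (a) and (b) --- is exactly the standard argument, and move (a) is fine as written.

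There is, however, a genuine flaw in your justification of move (b). You claim that both $\gamma_{i_l i'}\circ\gamma_{i'' i_l}$ and $\gamma_{k i'}\circ\gamma_{i'' k}$ equal $\gamma_{i'' i'}$ by a single application of the cocycle identity at $f_{i''}(c(r_n))$, on the grounds that ``$c(r_n)$ belongs simultaneously to all of $U_{i''}, U_{i_l}, U_{i'}$ \ldots as each of these charts covers a closed subinterval adjacent to $r_n$.'' This is false for $U_{i'}$: that chart is only required to cover $[r_{n+1},r_{n+2}]$, so it contains $c(r_{n+1})$ but need not contain $c(r_n)$, and $\gamma_{i''i'}$ need not even be defined near the relevant germ point. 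The two factors $\gamma_{i'' i_l}$ and $\gamma_{i_l i'}$ live at the two \emph{different} endpoints $c(r_n)$ and $c(r_{n+1})$ of the piece, and no single triple intersection is available. The correct repair uses two cocycle identities: $\gamma_{i''k}=\gamma_{i_l k}\circ\gamma_{i'' i_l}$ as germs at $f_{i''}(c(r_n))$ (legitimate, since $c(r_n)\in U_{i''}\cap U_{i_l}\cap U_k$), and $\gamma_{k i'}\circ\gamma_{i_l k}=\gamma_{i_l i'}$ as germs at $f_{i_l}(c(r_{n+1}))$ (legitimate, since $c(r_{n+1})\in U_{i_l}\cap U_k\cap U_{i'}$). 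To chain these you must know that the germ of $\gamma_{i_l k}$ produced at $f_{i_l}(c(r_n))$ is the one needed at $f_{i_l}(c(r_{n+1}))$, i.e.\ that $f_{i_l}(c(r_n))=f_{i_l}(c(r_{n+1}))$. This holds precisely because $c$ is \emph{vertical}, so $f_{i_l}\circ c$ is constant on $[r_n,r_{n+1}]$ --- contrary to your opening remark, the argument is not purely formal in the transition maps: the verticality of the path is used essentially here, and your write-up never invokes it.
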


\begin{lemme} \label{lemmaExistsPath}
Consider an element of the holonomy pseudogroup
\[ \gamma = \gamma_{i_{p-1} i_p} \circ \dots \circ \gamma_{i_1 i_2} \] and an element $x \in T_{i_1}$.
Then there exists a piecewise $C^1$ path $c$ in $M$ such that the germ of $\gamma$ at $x$ is the holonomy germ from $T_{i_1}$ to $T_{i_p}$ at $x$ obtained by sliding along $c$.
\end{lemme}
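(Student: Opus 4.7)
The plan is to build the required vertical path by stitching together short vertical segments inside successive charts, and then to verify that sliding along the resulting path reproduces $\gamma$ by a direct application of Proposition~\ref{lemmaSliding}.

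First, I would set up notation by letting $x_1 = x$ and $x_{l+1} = \gamma_{i_l i_{l+1}}(x_l)$ for $l = 1, \ldots, p-1$. Since each $\gamma_{i_l i_{l+1}}$ is defined at $x_l$, there exists a point $y_l \in U_{i_l} \cap U_{i_{l+1}}$ such that $f_{i_l}(y_l) = x_l$ and $f_{i_{l+1}}(y_l) = x_{l+1}$. The key geometric observation is that for consecutive indices $l$ and $l+1$, both $y_l$ and $y_{l+1}$ lie in $U_{i_{l+1}}$, and they both project under $f_{i_{l+1}}$ to $x_{l+1}$. In other words, $y_l$ and $y_{l+1}$ belong to the \emph{same plaque} of $U_{i_{l+1}}$. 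Since each plaque is diffeomorphic to the open ball $V_{i_{l+1}}$ (hence connected), there is a piecewise $C^1$ path $\sigma_l : [l, l+1] \to U_{i_{l+1}}$ from $y_l$ to $y_{l+1}$ whose image lies entirely inside this single plaque; such a path is vertical because $f_{i_{l+1}} \circ \sigma_l$ is constantly equal to $x_{l+1}$.

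Next, I concatenate: define $c : [1, p-1] \to M$ to be $\sigma_l$ on $[l, l+1]$ for $l = 1, \ldots, p-2$. Then $c$ is piecewise $C^1$ and vertical, with $c(1) = y_1 \in U_{i_1}$ and $c(p-1) = y_{p-1} \in U_{i_p}$. (The edge case $p = 2$ is handled by taking $c$ to be the constant path at any $y_1 \in U_{i_1} \cap U_{i_2}$ with $f_{i_1}(y_1) = x$.) To read off the holonomy germ of $c$, I use the partition $t_l = l$ together with the index sequence $i_1, i_2, \ldots, i_{p-1}$: for each $l$, the segment $\sigma_l$ indeed lies in $U_{i_{l+1}}$, so this partition is admissible in the sense of Definition~\ref{defHolonomyPath}. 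By Proposition~\ref{lemmaSliding}, the resulting holonomy germ at $x$ equals $\gamma_{i_{p-1} i_p} \circ \gamma_{i_{p-2} i_{p-1}} \circ \cdots \circ \gamma_{i_1 i_2}$, which is exactly $\gamma$.

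The only subtlety I anticipate is checking that each chosen $y_l$ really exists and lies in the correct intersection $U_{i_l} \cap U_{i_{l+1}}$; this is precisely the condition that $x_l$ lies in the domain $f_{i_l}(U_{i_l} \cap U_{i_{l+1}})$ of $\gamma_{i_l i_{l+1}}$, which is guaranteed by the hypothesis that the composition defining $\gamma$ makes sense at $x$. Beyond that, the argument is purely combinatorial once one observes the ``same plaque'' property of $y_l$ and $y_{l+1}$ in $U_{i_{l+1}}$; this is the crux of the construction and the rest is bookkeeping.
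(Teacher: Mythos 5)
Your construction is correct and is essentially the paper's own proof: in both cases the path is built plaque by plaque, staying over the successive images $\gamma_{i_{l-1} i_l}\circ\dots\circ\gamma_{i_1 i_2}(x)$ and using path-connectedness of the plaques $V_i$ to join the transition points. The only (harmless) discrepancy is a bookkeeping slip in your last paragraph, where the admissible index sequence for the intervals $[l,l+1]$ should be $i_2,\dots,i_{p-1}$ rather than $i_1,\dots,i_{p-1}$; the resulting composition is still exactly $\gamma$.
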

\begin{proof}
We construct $c : [1, p+1] \to M$ such that for each $l \in \{1, \ldots, p\}$, and all $t \in [l, l+1]$:
\[ f_{i_l}(c(t)) = \gamma_{i_{l-1} i_l} \circ \dots \circ \gamma_{i_1 i_2} (x). \]
This is possible because $V_i$ is path-connected for each $i$.
\end{proof}

\begin{lemme} \label{epsilon}
There exists $\epsilon_0 > 0$ such that for all $x \in M$, there exists $i \in \{1, \ldots, r\}$ which satisfies $x \in U_i$ and $d_i(f_i(x), \partial T_i) > \epsilon_0$ (see the beginning of Section~\ref{sectFoliations} for the notations).
\end{lemme}
\begin{proof}
Assume the contrary: there exists a sequence $(x_n)_{n \in \mathbb N}$ in $M$ such that for all $i \in \{1, \dots, r\}$ with $x_n \in U_i$, we have $d_i(f_i(x_n), \partial T_i) \leq 1/n$. Since $M$ is closed, we may assume that $x_n$ converges to some $x_\infty \in M$. Then $x_\infty$ is in some $U_{i_0}$, and for any large enough $n$, $x_n \in U_{i_0}$. Hence, $d_{i_0}(f_{i_0}(x_n), \partial T_{i_0}) \to 0$, which contradicts the fact that $x_\infty \in U_{i_0}$.
\end{proof}

In the following, we fix this $\epsilon_0$.

\begin{definition} \label{defNotation}
Let $x \in M$, $p \in \mathbb N$ and $i_1, \ldots, i_p \in \{1, \ldots, r\}$. We will write \[ \gamma = \gamma_{i_{p-1} i_p} \underset{x}{\circledcirc} \dots \underset{x}{\circledcirc} \gamma_{i_1 i_2} \] if:
\begin{enumerate}
\item $\gamma = \gamma_{i_{p-1} i_p} \circ \dots \circ \gamma_{i_1 i_2}$,
\item For all $l \in \{1, \ldots, p-1\}$, $d_{i_l}(\gamma_{i_{l-1} i_l} \circ \dots \circ \gamma_{i_1 i_2} (f_{i_1}(x)), \partial U_{i_l}) > \epsilon_0$,
\item For all $l \in \{1, \ldots, p-1\}$, the domain of $\gamma_{i_l i_{l+1}}$ contains the ball $B_g(\gamma_{i_{l-1} i_l} \circ \dots \circ \gamma_{i_1 i_2} (f_{i_1}(x)), \epsilon_0)$.
\end{enumerate}
Here, $B_g(y, r)$ denotes the ball of center $y$ and radius $r$ for the metric $g$.
\end{definition}

Recall that we have defined the \emph{ratio} of a local similarity at the beginning of Section~\ref{sectFoliations}.

\begin{remarque}
When $p = 1$, we will use the convention that $\gamma_{i_{p-1} i_p} \circ \dots \circ \gamma_{i_1 i_2} = \mathrm{Id}$. 
\end{remarque}

\begin{lemme} \label{lemmaExtensionDomain}
Consider $x \in M$ and an element $\gamma = \gamma_{i_{p-1} i_p} \underset{x}{\circledcirc} \dots \underset{x}{\circledcirc} \gamma_{i_1 i_2}$ of the holonomy pseudogroup.

For each $l \in \{1, \ldots, p\}$, write $r_l$ the ratio of $\gamma_{i_{l-1} i_l} \circ \dots \circ \gamma_{i_1 i_2}$ at $f_{i_1}(x)$ (in particular $r_1 = 1$).

Then the domain of $\gamma$ contains \[ B_g\left(f_{i_1}(x), \frac{\epsilon_0}{\underset{1 \leq l \leq p-1}{\max} r_l}\right). \]
\end{lemme}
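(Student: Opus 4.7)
The plan is to proceed by induction on the length $p$ of the composition. For each $l \in \{1, \ldots, p\}$, set $\gamma^l = \gamma_{i_{l-1} i_l} \circ \dots \circ \gamma_{i_1 i_2}$ (with the convention $\gamma^1 = \mathrm{id}$) and $y_l = \gamma^l(f_{i_1}(x))$. Condition (2) of Definition~\ref{defNotation} guarantees that $d_{i_l}(y_l, \partial T_{i_l}) > \epsilon_0$, and Condition (3) guarantees that $B_g(y_l, \epsilon_0)$ lies inside the domain of $\gamma_{i_l i_{l+1}}$. The target radius $R := \epsilon_0/\max_{1 \leq l \leq p-1} r_l$ is chosen precisely so that $r_l R \leq \epsilon_0$ for every $l$ appearing as an intermediate step.

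Before starting the induction, I would record the following small but crucial observation: each transition map $\gamma_{ij}$, being a local similarity between the connected open balls $T_i$ and $T_j$, has a \emph{globally} constant ratio on its domain of definition. Consequently, whenever $\gamma^l$ is defined on a connected open set, it is itself a similarity of constant ratio $r_l$ there, and in particular it maps $B_g(f_{i_1}(x), R)$ into $B_g(y_l, r_l R)$.

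With this in hand, the induction step becomes transparent. The base case $p = 2$ is immediate from Condition (3), since $r_1 = 1$. For the inductive step, I apply the hypothesis to the shorter composition $\gamma' = \gamma_{i_{p-2} i_{p-1}} \underset{x}{\circledcirc} \dots \underset{x}{\circledcirc} \gamma_{i_1 i_2}$: it is defined on $B_g(f_{i_1}(x), R)$, and by the observation above its image is contained in $B_g(y_{p-1}, r_{p-1} R) \subseteq B_g(y_{p-1}, \epsilon_0)$. By Condition (3) applied at step $p-1$, this last ball lies in the domain of $\gamma_{i_{p-1} i_p}$, so $\gamma = \gamma_{i_{p-1} i_p} \circ \gamma'$ is defined on all of $B_g(f_{i_1}(x), R)$, which is the desired conclusion.

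The only genuinely non-trivial ingredient is the global-ratio observation: without it, one would need a separate pointwise continuity argument to control how distances get distorted across the ball at each intermediate step, and the induction would become significantly messier. Once that observation is secured, the rest of the proof is a telescoping application of the inclusions built directly into Definition~\ref{defNotation}, and no further subtlety arises.
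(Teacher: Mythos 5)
Your proof is correct and takes essentially the same route as the paper: induction on $p$, applying the induction hypothesis to the truncated composition and then using Condition (3) of Definition~\ref{defNotation}, together with the fact that a local similarity has constant ratio on a connected set, to see that the image of the ball fits inside the domain of the last transition map. The paper's argument is identical in substance (it just writes the image of the ball as an equality rather than an inclusion), so no further comment is needed.
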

\begin{proof}
We prove the lemma by induction on $p$. For $p = 1$ this results from Definition~\ref{defNotation}.

Consider an element $\gamma = \gamma_{i_{p-1} i_p} \underset{x}{\circledcirc} \dots \underset{x}{\circledcirc} \gamma_{i_1 i_2}$ of the holonomy pseudogroup. Assume (induction hypothesis) that the domain of $\tilde \gamma = \gamma_{i_{p-2} i_{p-1}} \underset{x}{\circledcirc} \dots \underset{x}{\circledcirc} \gamma_{i_1 i_2}$ contains the ball $B_g\left(f_{i_1}(x), \frac{\epsilon_0}{\max_{1 \leq l \leq p-2} r_l}\right)$. Then \[ \tilde\gamma \left(B_g\left(f_{i_1}(x), \frac{\epsilon_0}{\underset{1 \leq l \leq p-1}{\max} r_l}\right)\right) = B_g\left(\tilde \gamma (f_{i_1}(x)), \frac{r_{p-1} \epsilon_0}{\underset{1 \leq l \leq p-1}{\max} r_l}\right). \] Moreover, the domain of $\gamma_{i_{p-1} i_p}$ contains $B_g(\tilde \gamma (f_{i_1}(x)), \epsilon_0)$ (by Definition~\ref{defNotation}), which itself contains $B_g\left(\tilde \gamma (f_{i_1}(x)), \frac{r_{p-1} \epsilon_0}{\max_{1 \leq l \leq p-1} r_l}\right)$.

Thus the domain of $\gamma = \gamma_{i_{p-1}i_p} \circ \tilde \gamma$ contains the ball $B_g\left(f_{i_1}(x), \frac{\epsilon_0}{\max_{1 \leq l \leq p-1} r_l}\right)$.
\end{proof}

\begin{lemme} \label{lemmaDefinitionGamma}
Let $\gamma \in \Gamma$, $x \in M$ and $i \in \{1, \ldots, r\}$, such that $\gamma$ is defined on a neighborhood of $f_i(x)$ in $T_i$ and takes its values in $T_j$.

Then there exists $\tilde \gamma = \gamma_{i_{p-1} i_p} \underset{x}{\circledcirc} \dots \underset{x}{\circledcirc} \gamma_{i_1 i_2}$ defined on a neighborhood of $f_{i_1}(x)$, which has the same germ as $\gamma_{ji_p} \circ \gamma \circ \gamma_{i_1i}$ at $f_{i_1}(x)$.
\end{lemme}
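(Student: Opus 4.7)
The plan is to realise $\gamma$ as the holonomy along a piecewise $C^1$ path in $M$, and then to chop this path finely enough, using Lemma~\ref{epsilon} and compactness of $[a,b]$, so that at every intermediate time we can select a chart in which the current transversal image is far from the boundary.

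First, since $\gamma \in \Gamma$, it can be written (on some neighbourhood of $f_i(x)$) as a composition of transition maps $\gamma = \gamma_{j_{k-1} j_k} \circ \cdots \circ \gamma_{j_1 j_2}$ with $j_1 = i$ and $j_k = j$. Lemma~\ref{lemmaExistsPath} then produces a piecewise $C^1$ path $c : [a,b] \to M$ such that the holonomy germ from $T_i$ to $T_j$ at $f_i(x)$ obtained by sliding along $c$ is exactly $\gamma$. The point $c(a)$ lies in the leaf through $x$ with $f_i(c(a)) = f_i(x)$, and $c(b)$ is the endpoint in $T_j$.

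Next, for each $t \in [a,b]$, apply Lemma~\ref{epsilon} at $c(t)$ to obtain an index $k(t) \in \{1,\ldots,r\}$ with $c(t) \in U_{k(t)}$ and $d_{k(t)}(f_{k(t)}(c(t)), \partial T_{k(t)}) > \epsilon_0$. By continuity of $c$ and of $f_{k(t)}$ this property survives on an open interval $I_t \subset [a,b]$ containing $t$. Extract a finite subcover, obtaining a subdivision $a = t_0 < t_1 < \cdots < t_p = b$ and indices $i_1, \ldots, i_p$ such that, for each $l$ and each $t \in [t_{l-1}, t_l]$, both $c(t) \in U_{i_l}$ and $d_{i_l}(f_{i_l}(c(t)), \partial T_{i_l}) > \epsilon_0$ hold. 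Defining $\tilde\gamma$ as the holonomy germ along $c$ from $T_{i_1}$ to $T_{i_p}$ at $f_{i_1}(x)$, Proposition~\ref{lemmaSliding} gives $\tilde\gamma = \gamma_{i_{p-1} i_p} \circ \cdots \circ \gamma_{i_1 i_2}$, and the compatibility of holonomy germs with chart changes at the two endpoints yields the required germ equality $\tilde\gamma = \gamma_{j i_p} \circ \gamma \circ \gamma_{i_1 i}$.

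Conditions (1) and (2) of Definition~\ref{defNotation} are now immediate from the construction. The real work is condition (3): one must show that the domain of each $\gamma_{i_l i_{l+1}}$ contains the full ball $B_g(\gamma_{i_{l-1}i_l} \circ \cdots \circ \gamma_{i_1 i_2}(f_{i_1}(x)), \epsilon_0)$ in $T_{i_l}$. This is the main obstacle, since this domain is $f_{i_l}(U_{i_l} \cap U_{i_{l+1}})$, and a priori nothing guarantees that the intermediate image lies deeply enough inside it. I would address this by refining the subdivision once more, using the facts that $c(t_l) \in U_{i_l} \cap U_{i_{l+1}}$ and that the two projections $f_{i_l}(c(t_l))$ and $f_{i_{l+1}}(c(t_l))$ are both at distance $>\epsilon_0$ from the respective transversal boundaries, combined with a uniform estimate obtained by compactness of the image $c([a,b])$: on a sufficiently fine subdivision, the projections $f_{i_l}(c(t_l))$ sit in a compact subset of $f_{i_l}(U_{i_l} \cap U_{i_{l+1}})$ whose distance to its complement in $T_{i_l}$ is bounded below uniformly, and one replaces $\epsilon_0$ by the minimum of the original $\epsilon_0$ and this uniform bound. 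This is legitimate because $\epsilon_0$ can be shrunk without affecting the validity of Lemma~\ref{epsilon}.
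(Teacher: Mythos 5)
Your main construction is the paper's: realize $\gamma$ as a holonomy germ along a piecewise $C^1$ path via Lemma~\ref{lemmaExistsPath}, cover $[a,b]$ by the open intervals on which some chart satisfies the conclusion of Lemma~\ref{epsilon}, extract an ordered finite subcover, read off the chain $\gamma_{i_{p-1}i_p}\circ\dots\circ\gamma_{i_1 i_2}$, and conclude with Proposition~\ref{lemmaSliding}. Up to that point the two arguments coincide, and conditions (1) and (2) of Definition~\ref{defNotation} do indeed come for free.

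The problem is your handling of condition (3). You propose to replace $\epsilon_0$ by the minimum of the original $\epsilon_0$ and a lower bound, ``obtained by compactness of the image $c([a,b])$'', on the distance from the intermediate projections to the complement of $f_{i_l}(U_{i_l}\cap U_{i_{l+1}})$ in $T_{i_l}$. That bound depends on the path $c$, hence on $\gamma$ and on $x$; but $\epsilon_0$ is fixed once and for all immediately after Lemma~\ref{epsilon} and must serve as a single uniform constant in Definition~\ref{defNotation}, in Lemma~\ref{lemmaExtensionDomain}, and above all in Lemma~\ref{lemmaFoliations}, whose proof applies the present lemma to an entire sequence $(\gamma^n)$ and derives a contradiction from one $\epsilon_0$ valid for all $n$ simultaneously. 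A per-path shrinking therefore either invalidates those later arguments or only proves a weaker statement (the $\circledcirc$ conditions for some $\gamma$-dependent radius). Nor can your bound be made uniform by compactness of $M$ instead of $c([a,b])$: there is in general no positive lower bound on $d_{i}\bigl(f_i(y),\, T_i\setminus f_i(U_i\cap U_j)\bigr)$ over $y\in U_i\cap U_j$, even when $f_i(y)$ and $f_j(y)$ are both $\epsilon_0$-far from $\partial T_i$ and $\partial T_j$, since the plaque through a nearby transversal point may fail to reach $U_j$. The paper reads condition (3) directly off the construction with the original $\epsilon_0$ and makes no such detour; if you believe an extra argument is needed there, it must be a one-time global one (for instance, arranging the foliated atlas in advance so that each $\gamma_{ij}$ extends to the $\epsilon_0$-neighbourhood of $f_i(U_i\cap U_j)$ in $T_i$), not a refinement depending on the holonomy element being treated.
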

\begin{proof}
It results from Lemma~\ref{lemmaExistsPath} that the germ of $\gamma$ at $f_i(x)$ is the holonomy germ from $T_i$ to $T_j$ at $x$ obtained by sliding along a curve $c : [a, b] \to M$, such that $f_i(c(a)) = f_i(x)$, and $f_j(c(b)) = \gamma(f_i(x))$. For each $l \in \{1, \dots, r\}$, we define $E_l$ as the set of all open subsets $\mathcal I$ of $[a,b]$ which are intervals such that for all $t \in \mathcal I$, $c(t) \in U_l$ and $d_l(f_l(c(t)), \partial T_l) > \epsilon_0$. Lemma~\ref{epsilon} implies that $\cup_{1 \leq l \leq r} E_l$ is an open cover of $[a,b]$: it has a finite subcover $\left\{[a_1, b_1), (a_2, b_2), \ldots, (a_{p-1}, b_{p-1}), (a_p, b_p]\right\}$, where $a_1 = a$ and $b_p = b$. We may assume that for all $k \in \{1, \ldots, p-1\}$, $a_k \leq a_{k+1} \leq b_k \leq b_{k+1}$. For all $k \in \{1, \ldots, p \}$, choose $i_k \in \{ 1, \ldots, r \}$ such that $(a_k, b_k) \in E_{i_k}$, and define $\tilde \gamma = \gamma_{i_{p-1} i_p} \underset{x}{\circledcirc} \dots \underset{x}{\circledcirc} \gamma_{i_1 i_2}$. Then, by Lemma~\ref{lemmaSliding}, $\tilde \gamma$ has the same germ as $\gamma_{ji_p} \circ \gamma \circ \gamma_{i_1i}$ at $f_{i_1}(x)$.
\end{proof}

\begin{remarque}
The arguments developed in Lemmas~\ref{lemmaExtensionDomain} and~\ref{lemmaDefinitionGamma} are closely related to the notion of \emph{compactly generated pseudogroups} (see for example~\cite{MR1125840} for a definition).
\end{remarque}

\begin{lemme} \label{lemmaFoliations}
Let $E$ be the set of all $x \in M$ for which there exists $m > 1$ such that for all $i \in \{1, \dots, r\}$ with $x \in U_i$, every $\gamma \in \Gamma$ defined on $f_i(x)$ has ratio $\geq 1/m$ at $f_i(x)$.
\begin{enumerate}
\item In the definition of $E$, it is possible to choose $m$ independently of $x$.
\item If $E$ is non-empty, then $E = M$ and $\Gamma$ is equicontinuous.
\end{enumerate}
\end{lemme}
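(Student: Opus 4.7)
The plan is to combine Lemmas~\ref{lemmaDefinitionGamma} and~\ref{lemmaExtensionDomain} with the compactness of $M$. For Part 1, I will argue by contradiction using a convergent subsequence, and for Part 2 I will show $E$ is both open and closed in the connected manifold $M$, with equicontinuity falling out at the end by considering inverses.

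For Part 1, suppose no uniform $m$ exists. Then there is a sequence $(x_n) \subset E$ whose optimal bounds $m_{x_n} \to \infty$, which produces $\gamma_n \in \Gamma$ defined at $f_{i_n}(x_n)$ with ratios $r_n \to 0$. By compactness of $M$ and finiteness of the cover, one may extract a subsequence with $x_n \to x_\infty$ and $i_n \equiv i$. Applying Lemma~\ref{lemmaDefinitionGamma} I rewrite $\gamma_n$ as $\tilde \gamma_n = \gamma_{i^{(n)}_{p_n - 1} i^{(n)}_{p_n}} \underset{x_n}{\circledcirc} \cdots \underset{x_n}{\circledcirc} \gamma_{i^{(n)}_1 i^{(n)}_2}$, and Lemma~\ref{lemmaExtensionDomain} guarantees that its domain contains the ball $B_g(f_{i^{(n)}_1}(x_n), \epsilon_0 / \max_l r_l^{(n)})$. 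Once it is established that this radius stays bounded below, the point $f_{i^{(n)}_1}(x_\infty)$ lies in the domain for $n$ large, so $\gamma_n$ has the same small ratio $r_n$ near $x_\infty$; running this argument against a fixed reference point already in $E$ (or against $x_\infty$ itself, which will end up being in $E$ by closedness shown in Part 2) contradicts the definition of $E$.

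For Part 2, granted the uniform bound $m$ supplied by Part 1, I show $E$ is open and closed, so $E = M$ by connectedness of $M$. Openness: for $x_0 \in E$ and $y$ sufficiently close, any $\gamma$ defined at $f_i(y)$ also extends to $f_i(x_0)$ via the extension lemma, and since ratios are locally constant on connected components of domains, $r_\gamma(f_i(y)) = r_\gamma(f_i(x_0)) \geq 1/m$. Closedness: for $x_n \to x_\infty$ with $x_n \in E$ of uniform bound $m$, any $\gamma$ defined at $f_i(x_\infty)$ is automatically defined at $f_i(x_n)$ for $n$ large (domains are open) with the same ratio bounded below by $1/m$, and this bound is preserved in the limit. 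With $E = M$ the equicontinuity condition is immediate: for any $z \in T$ and any $\gamma \in \Gamma$ defined at $z$, the ratio at $z$ is at least $1/m$, while the reciprocal bound follows from applying the same lower bound to $\gamma^{-1}$ at $\gamma(z) \in T$, which corresponds to another point of $M = E$.

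The main obstacle is controlling the intermediate ratios $r_l^{(n)}$ in the $\circledcirc$ decomposition: even though the final ratio $r_n$ is small, a priori the partial compositions might have very large ratios, which would make the ball given by Lemma~\ref{lemmaExtensionDomain} shrink to nothing. Resolving this will likely require either showing that one can always choose (or modify) the decomposition so that $\max_l r_l^{(n)}$ is bounded in terms of $r_n$ and the finitely many ratios of the generators $\gamma_{ij}$, or running an induction on the length $p_n$ of the decomposition that exploits the finiteness of the cover and Lemma~\ref{epsilon} to keep the intermediate points uniformly inside their respective $T_{i_l}$'s.
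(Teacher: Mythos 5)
Your overall architecture matches the paper's: contradiction plus compactness for Part 1, an open-and-closed argument for Part 2, and inverses for the upper ratio bound. But you have left the central difficulty open, and you say so yourself: nothing in your argument controls the intermediate ratios $r_l$ of the partial compositions in the $\circledcirc$-decomposition, so the ball supplied by Lemma~\ref{lemmaExtensionDomain} may shrink to nothing and the limit point need not lie in the domain of $\tilde\gamma^n$. This is not a detail to be filled in later; it is the heart of Part 1. The paper resolves it with a specific trick: choose the index $q^n$ at which the partial ratio $r_l$ is \emph{maximal} (equivalently, at which the ratio of the remaining tail is minimal), let $y^n$ be the corresponding intermediate point (which still lies in $E$, since $E$ is saturated under holonomy), and replace $\tilde\gamma^n$ by the tail $\tilde\rho^n = \gamma_{i_{p-1}i_p}\circ\dots\circ\gamma_{i_{q^n}i_{q^n+1}}$ based at $y^n$. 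All partial ratios of $\tilde\rho^n$ at $y^n$ are then $\leq 1$, so Lemma~\ref{lemmaExtensionDomain} gives a domain containing the full ball $B_g(f_{i_{q^n}}(y^n),\epsilon_0)$, while the ratio of $\tilde\rho^n$ at $y^n$ is still $\leq r_\mathrm{max}^2/n$; compactness is then applied to the sequence $(y^n)$, not $(x^n)$, and a fixed $y^{n_0}\in E$ lies in all these domains, giving the contradiction. Neither of your two proposed fixes is what is needed: one cannot in general bound $\max_l r_l$ in terms of the final ratio and the generators, and no induction on $p_n$ is required once the basepoint is moved to the maximizing intermediate point.

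The same unresolved issue reappears in your openness argument for Part 2: the claim that any $\gamma$ defined at $f_i(y)$ ``also extends to $f_i(x_0)$ via the extension lemma'' again presupposes a bound on the intermediate ratios. The paper handles this by an induction on the length $l$ of the partial composition: assuming the first $l_0$ partial ratios lie in $[1/m,m]$, Lemma~\ref{lemmaExtensionDomain} shows the next partial composition is defined on $B_g(f_{i_1}(y_0),\epsilon_0/m)$, which contains $f_{i_1}(x_0)$ by the choice of the neighborhood $V$, and then $x_0\in E$ (together with the uniform $m$ from Part 1) bounds the next ratio. Your closedness step and the derivation of equicontinuity from $E=M$ via inverses are fine and agree with the paper. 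To make the proof complete you must supply the maximizing-index argument in Part 1 and the induction in the openness step of Part 2.
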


\begin{proof}
We start with the proof of the first statement. Assume that there is no uniform bound. Then, there exist sequences $(x^n)$, $(i^n)$, $(j^n)$ and $(\gamma^n)$ such that $x^n \in E$, $\gamma^n$ is defined on a neighborhood of $f_{i^n}(x^n)$ in $T_{i^n}$, takes its values in $T_{j^n}$, and the ratio of $\gamma^n$ is $\leq 1/n$ at $f_{i^n}(x^n)$ (the upper indices do not indicate exponentiation). Let $r_\mathrm{max}$ be the maximum ratio of $\gamma_{ij}$ for $i,j \in \{ 1, \ldots, r \}$.

For each $n$, Lemma~\ref{lemmaDefinitionGamma} gives us a $\tilde \gamma^n = \gamma_{i_{{p^n}-1}^n i_{p^n}^n} \underset{x^n}{\circledcirc} \dots \underset{x^n}{\circledcirc} \gamma_{i_1^n i_2^n}$, which has the same germ as $\gamma_{j^ni_{p^n}^n} \circ \gamma^n \circ \gamma_{i_1^ni^n}$ at $f_{i_1^n}(x^n)$. Notice that $\tilde \gamma^n$ has ratio $\leq r_\mathrm{max}^2/n$ at $f_{i_1^n}(x^n)$ (because $\tilde \gamma^n$ coincides with $\gamma_{j^ni_{p^n}^n} \circ \gamma^n \circ \gamma_{i_1^ni^n}$ on a neighborhood of $f_{i_1^n}(x^n)$, and the ratio of $\gamma^n$ is $\leq 1/n$ at $f_{i^n}(x^n)$).

Choose ${q^n} \in \{1, \ldots, {p^n}\}$ which minimizes the ratio of $\gamma_{i_{{p^n}-1}^n i_{p^n}^n} \circ \dots \circ \gamma_{i_{q^n}^n i_{{q^n}+1}^n}$ at $\gamma_{i_{{q^n}-1}^n i_{q^n}^n} \circ \dots \circ \gamma_{i_1^n i_2^n} (f_{i_1^n}(x^n))$ (in particular this ratio is $\leq r_\mathrm{max}^2/n$), and write $\tilde \rho^n = \gamma_{i_{{p^n}-1}^n i_{p^n}^n} \circ \dots \circ \gamma_{i_{q^n}^n i_{{q^n}+1}^n}$. Choose $y^n$ such that $f_{i_{q^n}^n}(y^n) = \gamma_{i_{{q^n}-1}^n i_{q^n}^n} \circ \dots \circ \gamma_{i_1 i_2} (f_{i_1^n}(x^n))$. Notice that $y^n \in E$.

By Lemma~\ref{lemmaExtensionDomain}, $\tilde \rho^n$ is well-defined on $B_g(f_{i_{q^n}^n}(y^n), \epsilon_0)$ and has ratio $\leq r_\mathrm{max}^2/n$ at $f_{i_{q^n}^n}(y^n)$.

Since $M$ is compact, we may assume up to extraction that $(y^n)$ converges to a limit $y \in M$ (and $y \in U_i$ for some $i$): there exists $n_0 > 0$ such that for all $n \geq n_0$, $y^n \in U_i$ and $d_i(f_i(y^n), f_i(y)) < \epsilon_0/(3 r_\mathrm{max})$. Thus, $\tilde \rho^{n}$ is well-defined on $f_{i_{q^{n_0}}^{n_0}}(y^{n_0})$ for all $n \geq n_0$, which contradicts the fact that $y^{n_0} \in E$ and ends the proof of the first statement.

\bigskip

To prove the second statement, first notice that for all $x \in E$, and all $i \in \{1, \dots, r\}$ such that $x \in U_i$, every $\gamma \in \Gamma$ defined on $f_i(x)$ (taking values in $T_j$) has ratio $\leq m$ at $f_i(x)$: otherwise, $\gamma^{-1}$ would have ratio $< 1/m$ at $f_j(x)$, which contradicts the fact that $\gamma(f_i(x)) \in f_j(E \cap U_j)$. 

Since $M$ is connected, it suffices to show that $E$ is open and closed in $M$. Thus, $\Gamma$ will be equicontinuous on $M$.

Let us show that $E$ is open. Let $x_0 \in E$ and $i_1$ such that $d_{i_1}(f_{i_1}(x_0), \partial T_{i_1}) > \epsilon_0$. Consider $V$ a neighborhood of $x_0$ such that $V \subseteq U_{i_1}$ and $f_{i_1}(V) \subseteq B_g(f_{i_1}(x_0), \epsilon_0/(2 m))$. Let us show that $V \subseteq E$: let $y_0 \in V$, $i \in \{1, \dots, r\}$, and $\gamma \in \Gamma$ defined on a neighborhood of $f_i(y_0)$, taking its values in $T_j$.

With Lemma~\ref{lemmaDefinitionGamma}, there exists a $\tilde \gamma = \gamma_{i_{p-1} i_p} \underset{y_0}{\circledcirc} \dots \underset{y_0}{\circledcirc} \gamma_{i_1 i_2}$, which has the same germ as $\gamma_{ji_p} \circ \gamma \circ \gamma_{i_1i}$ at $f_{i_1}(y_0)$.


Let us prove, by induction on $l$, that for all $l \in \{1, \ldots, p\}$, the ratio of $\gamma_{i_{l-1} i_l} \circ \dots \circ \gamma_{i_1 i_2}$ at $f_{i_1}(y_0)$ is between $1/m$ and $m$. For $l = 1$, there is nothing to prove. For $l = 2$, $\gamma_{i_1 i_2}$ is well-defined on $B_g(f_{i_1}(y_0), \epsilon_0)$, which contains $f_{i_1}(x_0)$. Since $x_0 \in E$, this implies that the ratio of $\gamma_{i_1 i_2}$ at $f_{i_1}(y_0)$ is between $1/m$ and $m$. We now assume that the assertion is true for all $l \leq l_0$, where $l_0 \in \{1, \ldots, p-1\}$. Then by Lemma~\ref{lemmaExtensionDomain}, $\gamma_{i_{l_0} i_{l_0+1}} \circ \dots \circ \gamma_{i_1 i_2}$ is well-defined on $B_g(f_{i_1}(y_0), \epsilon_0 / m)$, which contains $f_{i_1}(x_0)$. The fact that $x_0 \in E$ now implies that the ratio of $\gamma_{i_{l_0} i_{l_0+1}} \circ \dots \circ \gamma_{i_1 i_2}$ at $f_{i_1}(y_0)$ is between $m$ and $1/m$, which concludes the induction.

Therefore, the ratio of $\tilde \gamma$ is between $1/m$ and $m$ at $f_{i_1}(x_0)$. The ratio of $\gamma$ is at least $1/(r_\mathrm{max}^2 m)$ at $f_i(y_0)$, so $y_0 \in E$, and $E$ is open.

Now, we show that $M \setminus E$ is open in $M$. Let $x_0 \in M \setminus E$, $i \in \{1, \dots, r\}$, and $\gamma \in \Gamma$ defined on $f_i(x_0)$ with ratio $< 1/m$. Then $\gamma$ is defined on a connected open set $W \subseteq T_i$ containing $f_i(x_0)$, and $f_i^{-1}(W)$ is an open set of $M$, containing $x_0$ and contained in $M \setminus E$, so $M \setminus E$ is open.
\end{proof}

\bigskip \noindent {\bf End of the proof of Theorem~\ref{thmFoliations}.} Assume that $(T, g)$ is not flat, and let $T'$ be the set of all $y \in T$ at which the Riemann tensor of $g$ is nonzero. Notice that $T'$ is stable under the holonomy pseudogroup $\Gamma$. Now, Proposition~\ref{pseudodistance} gives us a Riemannian metric $g'$ on $T'$ which is invariant by $\mathrm{Sim}_\mathrm{loc}(T')$, and therefore invariant by the holonomy pseudogroup $\Gamma$. Hence, the set $E$ defined in Lemma~\ref{lemmaFoliations} is non-empty. By Lemma~\ref{lemmaFoliations}, $\Gamma$ is equicontinuous. Finally, in view of Proposition~\ref{propEquicontinuous}, $\mathcal F$ is a Riemannian foliation, and Theorem~\ref{thmFoliations} is proved.

\section{Decomposition theorems for locally metric connections} \label{sectEndProofs}

In this section, we prove Theorems~\ref{thmLocallyMetric} and~\ref{thmConformalStructure}. The group of affine mappings of a Riemannian manifold $(M,g)$ is the group of all diffeomorphisms of $M$ which preserve the Levi-Civita connection of $g$.

The proof relies on the following theorem:

\begin{thm}[Ponge-Reckziegel, 1993] \label{thmPonge}
Let $M$ be a simply connected Riemannian manifold, whose Levi-Civita connection $\nabla$ is reducible: thus, the tangent bundle $TM$ admits two complementary orthogonal distributions $E'$ and $E''$ invariant by parallel transport, which determine foliations $\mathcal F'$ and $\mathcal F''$. Assume that the leaves of $\mathcal F'$ are all complete. Then, $M$ is globally isometric to a product of Riemannian manifolds $M' \times M''$, and the foliations $\mathcal F'$ and $\mathcal F''$ are determined by the product structure.
\end{thm}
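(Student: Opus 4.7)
The plan is to construct a global isometry $\Phi: L'_0 \times L''_0 \to M$, where $L'_0$ and $L''_0$ denote the leaves of $\mathcal F'$ and $\mathcal F''$ through a fixed basepoint $x_0 \in M$. The strategy is to glue together the local product decompositions provided by the local version of de~Rham's theorem (Theorem~\ref{thmDeRham}, part~2), using simple connectedness of $M$ to kill the monodromy and completeness of the $\mathcal F'$-leaves to extend partial isometries to global ones.

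By the local de~Rham theorem, every $x \in M$ admits an open neighborhood $U_x$ isometric to a product $V'_x \times V''_x$ in which $\mathcal F'$ and $\mathcal F''$ restrict to the two trivial factor foliations. Given $q \in L''_0$, one chooses a piecewise smooth path $\gamma : [0,1] \to L''_0$ from $x_0$ to $q$ and a chain of such product charts $U_1,\ldots,U_N$ covering its image. In each chart, projection onto the first factor identifies small pieces of nearby $\mathcal F'$-leaves; chaining these identifications along $\gamma$ produces a local isometry $\sigma_\gamma$ from a neighborhood of $x_0$ in $L'_0$ onto a neighborhood of $q$ in the $\mathcal F'$-leaf $L'_q$ through $q$. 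Since both $L'_0$ and $L'_q$ are complete and totally geodesic in $M$, this local isometry extends uniquely to a global isometry $L'_0 \to L'_q$, and one sets $\Phi(p,q) = \sigma_\gamma(p)$.

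The next step is to show that $\Phi(p,q)$ is independent of the chosen $\gamma$. For two paths homotopic in $L''_0$, this is a routine covering argument with product charts. For an arbitrary loop $\gamma$ in $L''_0$ based at $x_0$, simple connectedness of $M$ implies that $\gamma$ bounds a disk in $M$ (although not necessarily in $L''_0$); covering this disk by product charts and repeatedly comparing the two factor projections, one sees that the associated monodromy fixes a neighborhood of $x_0$ in $L'_0$ pointwise, and then by rigidity of isometries on a complete Riemannian manifold it must fix all of $L'_0$. Once $\Phi$ is well defined, the local product structure shows it is a local isometry; completeness of the $\mathcal F'$-leaves gives surjectivity, and simple connectedness of $M$ together with the usual covering-space argument promotes $\Phi$ to a global isometry, with $\mathcal F'$ and $\mathcal F''$ recovered as the two product foliations.

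I expect the main obstacle to be the monodromy argument: transferring a null-homotopy in $M$ of a loop in $L''_0$ into the triviality of the product holonomy built from the local splittings. The null-homotopy need not lie in $L''_0$, so one must interpolate through charts in which both foliations are simultaneously trivially factored, and carefully verify that the resulting isometry of $L'_0$ is the identity near $x_0$. Apart from this, everything reduces to formal consequences of the local de~Rham theorem together with the completeness hypothesis on $\mathcal F'$, which explains why the asymmetry of the hypothesis is exactly what is needed.
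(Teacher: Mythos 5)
The paper does not actually prove Theorem~\ref{thmPonge}: it cites Ponge--Reckziegel~\cite{MR1245571} and remarks that the classical proof of de Rham's theorem in~\cite{MR0152974} adapts with very few changes, so your proposal has to be measured against that classical argument. Your overall strategy (foliation holonomy along paths, monodromy killed by the simple connectedness of $M$, completeness used to globalize) is the right one, but there is a genuine gap at the step where you extend $\sigma_\gamma$ from a neighborhood of $x_0$ in $L'_0$ to a global isometry $L'_0 \to L'_q$. First, the principle you invoke is false in general: a germ of isometry between two complete totally geodesic submanifolds need not extend globally (a germ of isometry from a flat cylinder to the Euclidean plane, both complete, admits no global extension --- by Ambrose's theorem such an extension would be a covering map onto the simply connected plane, hence an isometry). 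Second, and more importantly, the geometric way to extend $\sigma_\gamma$ --- sliding points $p \in L'_0$ along lifts of $\gamma$ --- requires lifting $\gamma$ into the $\mathcal F''$-leaf through $p$, and those leaves are precisely the ones \emph{not} assumed complete; for $p$ far from $x_0$ the lift may simply run out of the (incomplete) manifold $M$, and since $M$ is incomplete one cannot even pass to the limit along a partial lift of finite length. This is exactly the point where the asymmetry of the hypothesis must be exploited, and your sketch replaces it with an unjustified extension claim.

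The repair, which is how the Kobayashi--Nomizu argument is actually organized and why it ``adapts directly'', is to construct $\Phi(p,q)$ by developing curves in the \emph{complete} direction only: join $x_0$ to $p$ by a broken geodesic $\delta$ inside $L'_0$, parallel-transport its successive velocity vectors along $\gamma$ (and along the portions already developed), and follow the corresponding geodesics inside the leaves $L'_{\gamma(t)}$. Every auxiliary curve produced in this way lies in a leaf of $\mathcal F'$, where completeness guarantees existence for all time; the local de Rham charts then show the developed endpoint realizes the holonomy, and well-definedness reduces to the monodromy argument over a null-homotopy in $M$ that you correctly identify as the main remaining difficulty (and which is fine in spirit, together with the standard rigidity fact that an isometry of a connected leaf fixing an open set pointwise is the identity). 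As written, however, your construction lifts in the wrong direction and substitutes a false extension principle for the one step where the completeness hypothesis has to do real work.
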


Theorem~\ref{thmPonge} is a particular case of the main result of~\cite{MR1245571}. In fact, the classical proof of the de Rham theorem given in~\cite{MR0152974} also adapts directly to this case with very few changes.

We will also need the following theorem (for a proof, see for example~\cite{MR0152974}, page 185):
\begin{thm} \label{thmLocalDeRham}
Consider a simply connected Riemannian manifold $(\tilde M, \tilde g)$ and a point $\tilde x \in \tilde M$. Let $E_{\tilde x}^0$ be the maximal linear subspace of the tangent space $T_{\tilde x} \tilde M$ on which $\mathrm{Hol}_{\tilde x}(\tilde \nabla)$ acts trivially, and define $E_{\tilde x}^{>0}$ as the orthogonal complement of $E_{\tilde x}^0$ in $T_{\tilde x} \tilde M$. Then there exists a decomposition of $E_{\tilde x}^{>0}$, unique up to the order of the factors, into mutually orthogonal, irreducible subspaces, which are invariant by $\mathrm{Hol}_{\tilde x}(\tilde \nabla)$:
\[ E_{\tilde x}^{>0} = E_{\tilde x}^1 \oplus \ldots \oplus E_{\tilde x}^k. \]
\end{thm}

The following lemma is classical:
\begin{lemme} \label{lemmaAffSim}
Consider a connected Riemannian manifold $(M, g)$ with its Levi-Civita connection $\nabla$. If $\nabla$ has irreducible holonomy, then:
\begin{enumerate}
	\item the only metrics whose Levi-Civita connection is $\nabla$ are the metrics $h_\lambda = \lambda^2 g$, $\lambda > 0$,
	\item $\mathrm{Aff}(M, g) = \mathrm{Sim}(M, g)$.
\end{enumerate}
\end{lemme}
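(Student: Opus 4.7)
\bigskip

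\noindent\textbf{Proof plan for Lemma~\ref{lemmaAffSim}.} The plan is to reduce both statements to a standard parallel-endomorphism argument, using irreducibility of the holonomy representation on a single tangent space.

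For (1), suppose $h$ is another Riemannian metric on $M$ whose Levi-Civita connection is $\nabla$. Since $\nabla$ is $g$-metric, there exists a unique field $A$ of $g$-symmetric, positive-definite endomorphisms of $TM$ such that $h(u,v) = g(Au,v)$. Because both $g$ and $h$ are $\nabla$-parallel, $A$ is $\nabla$-parallel as well, hence commutes with every element of the holonomy group $\mathrm{Hol}_x(\nabla) \subseteq GL(T_xM)$. Fix $x \in M$ and diagonalize $A_x$ in a $g_x$-orthonormal basis; each eigenspace of $A_x$ is stable under $\mathrm{Hol}_x(\nabla)$. By irreducibility, $A_x$ has a single eigenvalue $\lambda^2 > 0$, so $A_x = \lambda^2 \mathrm{Id}$. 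Parallel transport then forces $A \equiv \lambda^2 \mathrm{Id}$ on all of $M$ (as $M$ is connected), giving $h = \lambda^2 g$.

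For (2), the inclusion $\mathrm{Sim}(M,g) \subseteq \mathrm{Aff}(M,g)$ is immediate: if $\phi^* g = \lambda^2 g$ then $\phi^* g$ and $g$ share the same Levi-Civita connection (Christoffel symbols are unchanged by a constant rescaling of the metric), and $\phi$ carries the Levi-Civita connection of $g$ to that of $\phi^* g$, so $\phi^* \nabla = \nabla$. Conversely, take $\phi \in \mathrm{Aff}(M,g)$, so that $\phi^* \nabla = \nabla$. Then $\nabla$ is also the Levi-Civita connection of $\phi^* g$, and part (1) applies to $h = \phi^* g$, yielding a constant $\lambda > 0$ with $\phi^* g = \lambda^2 g$, i.e.\ $\phi \in \mathrm{Sim}(M,g)$.

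The only subtlety I anticipate is to be careful about the distinction between the holonomy group and the restricted holonomy group, and about whether irreducibility of $\mathrm{Hol}_x(\nabla)$ is needed or only that of $\mathrm{Hol}^0_x(\nabla)$. Since $A$ is parallel along \emph{every} loop (not only null-homotopic ones), the argument only uses that the full holonomy acts without invariant subspace on $T_xM$, which is exactly the hypothesis. No assumption of completeness or compactness is needed, and the argument is purely pointwise-plus-parallel-transport; no heavy machinery is invoked beyond the standard correspondence between parallel tensors and holonomy-invariant tensors at a point.
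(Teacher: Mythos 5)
Your proposal is correct and follows essentially the same route as the paper: both proofs introduce the parallel (equivalently, holonomy-invariant) symmetric endomorphism relating $h$ to $g$, use irreducibility to force it to be a homothety at a point, propagate the constant by parallel transport over the connected manifold, and then deduce (2) by applying (1) to $\phi^* g$ for $\phi \in \mathrm{Aff}(M,g)$. The only differences are cosmetic (your $A$ is the inverse of the paper's $F_x$, and you spell out the easy inclusion $\mathrm{Sim} \subseteq \mathrm{Aff}$).
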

\begin{proof}
\begin{enumerate}
\item Let $h$ be a metric whose Levi-Civita connection is $\nabla$ and let $x \in M$. Define a linear mapping $F_x : T_xM \to T_xM$ in the following way: for all $u \in T_xM$, $F_x(u)$ is the unique vector such that $g_x(u, \cdot) = h_x(F_x(u), \cdot)$. Since $\mathrm{Hol}_x(\nabla)$ preserves $g_x$ and $h_x$, the eigenspaces of $F_x$ are invariant under $\mathrm{Hol}_x(\nabla)$. Since $\nabla$ is irreducible, the only possible eigenspaces for $F_x$ are $\{0\}$ and $T_xM$. But $F_x$ is self-adjoint (for both metrics $g$ and $h$), so $F_x$ is a homothety. This shows that there exists $\lambda_x > 0$ such that $h_x = \lambda_x^2 g_x$.

Now, we prove that $\lambda_x$ does not depend on $x$: for $x, y \in M$, choose any nonzero vector $u \in T_xM$ and any path $c : [0,1] \to M$ joining $x$ to $y$: if $v$ is obtained by the parallel transport of $u$ along $c$, we have $h_y(v) = h_x(u) = \lambda_x^2 g_x(u) = \lambda_x^2 g_y(v)$, and therefore $\lambda_x = \lambda_y$ (since $g$ and $h$ have the same Levi-Civita connection, the parallel transport is the same for $g$ and $h$).
\item For all $\phi \in \mathrm{Aff}(M,g)$, the metric $\phi^*g$ is preserved by the Levi-Civita connection $\nabla$ of $g$, so $\phi^*g$ is proportional to $g$ and therefore $\phi$ is a similarity.
\end{enumerate}
\end{proof}

We are now ready to prove Theorem~\ref{thmLocallyMetric}. Consider a compact connected manifold $(M, \nabla)$, where $\nabla$ is locally metric, and its universal cover $(\tilde M, \tilde \nabla)$, on which there is a metric $\tilde g$ preserved by $\tilde \nabla$. Fix $x \in M$ and choose a preimage $\tilde x \in \tilde M$.

Consider the subspaces $E_{\tilde x}^0$ and $E_{\tilde x}^{>0} = E_{\tilde x}^1 \oplus \ldots \oplus E_{\tilde x}^k$ given by Theorem~\ref{thmLocalDeRham}. 

This induces a decomposition $T_x M = E_x^0 \oplus E_x^{>0}$. Furthermore, since $\pi_1(M)$ acts on $\tilde M$ by preserving the connection $\tilde \nabla$, this decomposition does not depend on the choice of the preimage $\tilde x$ of $x$, up to the order of the factors. Thus, the holonomy group $\mathrm{Hol}_x(\nabla)$ acts on $E_x^{>0}$ by permuting the factors: by considering a finite cover of $M$, one may assume that $\mathrm{Hol}_x(\nabla)$ preserves the decomposition of $T_x M$. Then, one may consider $E'$ the distribution on $\tilde M$ obtained by parallel transport of $E_x^k$, and $E''$ obtained by parallel transport of $E_x^0 \oplus \ldots \oplus E_x^{k-1}$. If $E'' = \{0\}$, then the manifold $\tilde M$ has irreducible holonomy and therefore the conclusion of Theorem~\ref{thmLocallyMetric} is satisfied. We will now assume that $E'' \neq \{0\}$. Since the distributions $E'$ and $E''$ are invariant by parallel transport, they are integrable (see~\cite{MR0152974}, page 180), so that they induce transverse foliations $\mathcal F'$ and $\mathcal F''$ on $M$. By pullback, one obtains distributions $\tilde{E'}$ and $\tilde{E''}$ on $\tilde M$, and foliations $\tilde{\mathcal F'}$ and $\tilde{\mathcal F''}$ on $\tilde M$.

The local version of De Rham's theorem gives us a covering $(U_i)_{1 \leq i \leq r}$ of $M$ compatible with the foliations $\mathcal F'$ and $\mathcal F''$, such that each $U_i$ is diffeomorphic to $V_i \times T_i$, where $V_i$ (the plaque of $\mathcal F''$) is an open ball of $\mathbb R^p$ and $T_i$ (the plaque of $\mathcal F'$) an open ball of $\mathbb R^q$. Write the projections $f_i: U_i \to T_i$. The connection $\nabla$ on $M$ induces a connection $\nabla_T$ on the transversal $T = \cup_{1 \leq i \leq r} T_i$, which is preserved by the holonomy pseudogroup of $\mathcal F''$. Since each component of $T$ is simply connected, $\nabla_T$ is the Levi-Civita connection of a Riemannian metric $g_T$ on $T$. The holonomy pseudogroup of $\mathcal F''$ acts by affine transformations on $(T, g_T)$. But since $\nabla_T$ is irreducible, these transformations are in fact local similarities of $(T,g_T)$ (by Lemma~\ref{lemmaAffSim}), which implies that $\mathcal F''$ has a transverse similarity structure. 

By construction, the holonomy group of $M$ does not act trivially on $E'$, so $\mathcal F''$ is not transversally flat. With Theorem~\ref{thmFoliations}, $\mathcal F''$ can be equipped with a transverse Riemannian structure: we obtain a new Riemannian metric $h_T$ on the transversal $T$ such that the holonomy pseudogroup of $\mathcal F''$ acts by isometries on $(T, h_T)$.

Consider any Riemannian metric $\tilde g$ on the universal cover $\tilde M$. We are going to construct a new Riemannian metric $h$ on $\tilde M$. Choose $\tilde x \in \tilde M$ and two vectors $v, w \in T_{\tilde x} \tilde M$, and consider their projections $x \in M$ and $v, w \in T_x M$. Define $h_{\tilde x}(\tilde v, \tilde w)$ in the following way:
\begin{enumerate}
\item If $\tilde v, \tilde w \in \tilde E_{\tilde x}''$, then $h_{\tilde x}(\tilde v , \tilde w) = \tilde g_{\tilde x}(\tilde v, \tilde w)$,
\item If $\tilde v, \tilde w \in \tilde E_{\tilde x}'$, then $h_{\tilde x}(\tilde v , \tilde w) = ((f_i)^*(h_T))_x(v, w)$ for any $i$ such that $x \in U_i$ (since the holonomy pseudogroup of $\mathcal F''$ acts by isometries, the result does not depend on the choice of $i$),
\item If $\tilde v \in \tilde E_{\tilde x}'$ and $\tilde w \in \tilde E_{\tilde x}''$, then $h_{\tilde x}(\tilde v , \tilde w) = 0$.
\end{enumerate}

By construction, the metric $h$ is locally a product of metrics, so that the distributions $\tilde{E'}$ and $\tilde{E''}$ are invariant by parallel transport with respect to $h$. Furthermore:

\begin{prop}
The metric induced by $h$ on the leaves of $\tilde {\mathcal F'}$ is complete.
\end{prop}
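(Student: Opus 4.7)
The plan is to reduce geodesic completeness of any leaf $L$ of $\tilde{\mathcal F'}$ (with the induced metric $h|_L$) to the compactness of $M$, and then invoke Hopf--Rinow to pass to metric completeness.

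First I would record the local picture built into the construction of $h$: on each plaque $P$ of $\tilde{\mathcal F'}$ lying over a chart $U_i$, the map $f_i\circ\pi:P\to T_i$ is a diffeomorphism, and by the very definition of $h$ on the distribution $\tilde{E'}$, it is an isometry from $h|_L$ to $h_T$. Consequently, as long as a unit-speed $h|_L$-geodesic remains inside such a plaque $P$, its image is a unit-speed $h_T$-geodesic in $T_i$, and conversely any extension in $T_i$ lifts back to an extension in $L$.

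Second, I would prove an analogue of Lemma~\ref{epsilon} for the metric $h_T$: there exists $\epsilon_1>0$ such that every $x\in M$ lies in some $U_i$ with the closed ball $\overline{B_{h_T}(f_i(x),\epsilon_1)}$ compactly contained in $T_i$. The argument is exactly the one of Lemma~\ref{epsilon}, with $h_T$ in place of the metric $g$ on the transversal.

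Third, I would argue by contradiction. Suppose a unit-speed geodesic $\gamma:[0,a)\to L$ with $a<\infty$ maximal cannot be extended. By compactness of $M$, a subsequence of $\pi(\gamma(s_n))$ converges to some $x_\infty\in M$ as $s_n\uparrow a$. Pick $U_i$ with $x_\infty\in U_i$ and $\overline{B_{h_T}(f_i(x_\infty),\epsilon_1)}\subset T_i$. For $n$ large enough that $\pi(\gamma(s_n))\in U_i$, that $h_T(f_i(x_\infty),f_i(\pi(\gamma(s_n))))<\epsilon_1/2$, and that $a-s_n<\epsilon_1/2$, the image $f_i\circ\pi\circ\gamma|_{[s_n,a)}$ is a unit-speed $h_T$-geodesic staying inside the relatively compact ball $B_{h_T}(f_i(x_\infty),\epsilon_1)\subset T_i$. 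The standard fact that Riemannian geodesics whose image has compact closure extend past the supremum of their parameter interval then prolongs this geodesic past $a$, and pulling the extension back via the isometry $(f_i\circ\pi|_P)^{-1}$, where $P$ is the plaque of $\tilde{\mathcal F'}$ through $\gamma(s_n)$, contradicts maximality. The main technical step is the uniform-radius lemma, where compactness of $M$ does the essential work; the rest is a Hopf--Rinow-type extension argument transported through the local isometry between plaques of $\tilde{\mathcal F'}$ and $(T_i,h_T)$.
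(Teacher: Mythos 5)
Your proof is correct and follows essentially the same route as the paper: both rest on Lemma~\ref{epsilon} (applied to the transverse metric $h_T$) to get a uniform radius $\epsilon_0$ such that, via the local isometry between plaques of $\tilde{\mathcal F'}$ and $(T_i, h_T)$, every ball of radius $\epsilon_0$ in a leaf is relatively compact, whence completeness. The paper merely states this last implication in one line, whereas you spell out the geodesic-extension argument; the content is the same.
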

\begin{proof}
By Lemma~\ref{epsilon}, there exists $\epsilon_0 > 0$ such that for each $\tilde x \in \tilde M$, there exists $i \in \{1, \ldots, r\}$ such that the projection $x \in M$ of $\tilde x$ satisfies $x \in U_i$ and $d_i(f_i(x), \partial T_i) > \epsilon_0$, where $d_i$ is the distance induced by the metric $h$ on $T_i$. This implies that for every $\tilde x \in \tilde M$, in the leaf of $\tilde{\mathcal F'}$ which contains $\tilde x$, the ball of center $\tilde x$ and radius $\epsilon_0$ for the metric $h$ is compact. Thus, the metric on the leaves of $\tilde{\mathcal F'}$ is complete.
\end{proof}

Therefore, by Theorem~\ref{thmPonge}, $\tilde M$ is globally the product of two Riemannian manifolds $\tilde M'$ and $\tilde M''$ whose tangent distributions are $\tilde{E'}$ and $\tilde{E''}$. The existence of the de Rham decomposition follows by induction on the dimension of $M$. Thus, Theorem~\ref{thmLocallyMetric} is proved.

For the proof of Theorem~\ref{thmConformalStructure}, we will need the following propositions:

\begin{prop} \label{propHomothetyFlat}
Consider a connected Riemannian manifold $(M, g)$, and a similarity $\phi \in \mathrm{Sim}(M)$. Assume that $\phi$ has a fixed point $x \in M$, and that its ratio is $r_\phi < 1$. Then the manifold $M$ is isometric to the Euclidean $\mathbb R^q$ for some $q \geq 0$.
\end{prop}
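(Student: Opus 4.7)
The plan is to show successively that $M$ is flat, that the exponential map $\exp_x$ identifies $M$ with a quotient of Euclidean space, and finally that this quotient is trivial, so $M$ itself is Euclidean. Since $\phi$ is a similarity it preserves the Levi-Civita connection, so $\phi^*R = R$ as a $(1,3)$-tensor. Functoriality of sectional curvatures together with the conformal rescaling rule yields $K_g(\phi(y), d\phi_y\sigma) = r_\phi^{-2}\,K_g(y, \sigma)$ for every plane $\sigma \subset T_yM$. Iterating $n$ times and using that $\phi^n(y) \to x$, any nonzero sectional curvature at $y$ would force the curvature tensor to blow up near $x$, contradicting its continuity. Hence $R \equiv 0$ and $M$ is flat.

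Write $d\phi_x = r_\phi \cdot O$ with $O$ orthogonal. The affineness of $\phi$ yields $\phi \circ \exp_x = \exp_x \circ d\phi_x$, so the maximal (open) domain $U \subset T_xM$ of $\exp_x$ satisfies $d\phi_x^{-1}(U) = U$. Starting from a small ball $B(0, \epsilon) \subset U$ and iterating the expansion $d\phi_x^{-1}$ of ratio $r_\phi^{-1} > 1$, one obtains $U = T_xM$. Flatness prevents conjugate points, so $\exp_x$ is a local diffeomorphism everywhere, and $h := \exp_x^* g$ is a flat metric on $T_xM$ whose value at $0$ is the canonical inner product. From $\phi^*g = r_\phi^2 g$ one deduces $(d\phi_x)^* h = r_\phi^2 h$; evaluating this identity at an arbitrary $v$ and iterating gives $h_v(X,Y) = r_\phi^{-2n}\,h_{d\phi_x^n(v)}(d\phi_x^n X, d\phi_x^n Y)$, and letting $n \to \infty$ (so $d\phi_x^n(v) \to 0$) one finds that $h_v$ equals the canonical inner product for every $v$. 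Finally, for any $y \in M$, the sequence $\phi^n(y)$ converges to $x$, so $\phi^n(y) = \exp_x(w_n)$ for small $w_n$ and large $n$, whence $y = \exp_x(d\phi_x^{-n}(w_n))$; this gives surjectivity. A surjective local isometry with complete domain is a Riemannian covering map.

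So $M = \mathbb{R}^q/\Gamma$ for some discrete $\Gamma \subset \mathrm{Isom}(\mathbb{R}^q)$ acting freely. The similarity $\phi$ lifts to a similarity $\tilde\phi$ of $\mathbb{R}^q$ fixing the preimage of $x$, which we take to be $0$; so $\tilde\phi(y) = r_\phi Oy$ and $\tilde\phi\,\Gamma\,\tilde\phi^{-1} = \Gamma$. For any $\gamma \in \Gamma$, written $\gamma(y) = Ay + b$ with $A \in O(q)$, a direct computation gives $\tilde\phi^n \gamma \tilde\phi^{-n}(y) = O^n A O^{-n} y + r_\phi^n O^n b$. This sequence stays in a bounded subset of $\mathrm{Isom}(\mathbb{R}^q)$ and lies in the discrete group $\Gamma$, hence takes only finitely many values. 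So $\tilde\phi^k$ commutes with $\gamma$ for some $k \geq 1$, which forces in particular $r_\phi^k O^k b = b$; since $r_\phi < 1$ and $O$ is orthogonal, this yields $b = 0$, so $\gamma$ fixes $0$. Freeness of the $\Gamma$-action then gives $\gamma = \mathrm{id}$, and $\Gamma$ is trivial.

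The main obstacle I anticipate is the middle step, specifically the identification of the pull-back $\exp_x^* g$ with the standard Euclidean metric on $T_xM$: flatness alone yields only a locally Euclidean structure, and it is the equivariance under the contracting similarity $d\phi_x$, which propagates information from the fixed point $0$ outward, that rules out any non-standard behaviour at infinity.
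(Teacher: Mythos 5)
Your argument is correct. The first step (flatness via the scaling of curvature under $\phi$ and convergence of the orbit $\phi^n(y)$ to the fixed point) is essentially identical to the paper's. The second half, however, takes a genuinely different route. The paper avoids covering-space theory entirely: once $M$ is flat, it observes that $\exp_x$ coincides with $\phi^{-n}\circ \exp_x \circ D_x\phi^n$, which is manifestly an isometry from $B(0, r_\phi^{-n}\epsilon)$ \emph{onto} the metric ball $B_g(x, r_\phi^{-n}\epsilon)$; since these balls exhaust $M$, injectivity and surjectivity of $\exp_x$ come for free and the proof ends there. You instead establish that $\exp_x$ is defined on all of $T_xM$, identify the pull-back metric with the standard one via the contracting equivariance $(d\phi_x)^*h = r_\phi^2 h$, conclude that $\exp_x$ is a Riemannian covering, and then kill the deck group $\Gamma$ by conjugating a candidate element $\gamma(y)=Ay+b$ by $\tilde\phi^n$ and using discreteness plus the contraction $r_\phi^n O^n b \to 0$ to force $b=0$. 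All of these steps check out (in particular the uniform convergence $h_{w_n}(O^nX,O^nY)\to h_0(X,Y)$ and the finiteness of a bounded subset of a discrete group are used correctly), and the deck-group triviality argument is a self-contained alternative; what the paper's version buys is brevity, since identifying $\exp_x$ on each exhausting ball with an explicit surjective isometry makes the injectivity — the point your covering-space detour is designed to handle — automatic.
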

\begin{proof}
First, let us prove that $M$ is flat. Choose any $y \in M$ and four vectors $a, b, c, d$ in $T_yM$ of unit length for $g$. The point is that $\phi$ preserves $R$, \emph{i.e.}
\[ R (\phi_* a, \phi_* b) \phi_* c = \phi_* R (a, b) c. \]
Thus:
\[ \begin{aligned}
 \prodscal{R (a, b) c}{d} & = r_\phi^{-2n} \prodscal{\phi_*^n R (a, b) c}{\phi_*^n d}
 \\ &= r_\phi^{-2n} \prodscal{R (\phi_*^n a, \phi_*^n b) \phi_*^n c}{\phi_*^n d}
 \\ & \leq r_\phi^{-2n} r_\phi^{4n} \norm{R}_g (\phi^n(y)).
 \end{aligned} \]

Since $\phi^n(y)$ tends to the fixed point $x$, the quantity $\norm{R}_g (\phi^n(y))$ is bounded. Thus, $\prodscal{R (a, b) c}{d} = 0$, and therefore, $M$ is flat.

\bigskip

Now, since $M$ is flat, the exponential map $\exp_x : B(0,\epsilon) \to B_g(x, \epsilon)$ is an isometry for some $\epsilon > 0$ (where $B(0,\epsilon)$ is the ball in $T_xM$ of center $0$ and radius $\epsilon$ for the Euclidean metric $g_x$, while $B_g(x, \epsilon)$ is the ball in $M$ of center $x$ and radius $\epsilon$ for the distance induced by $g$).

Thus, for all $n \geq 0$, $\phi^{-n} \circ \exp_x \circ D_x \phi^n$ is an isometry from $B(0, r_\phi^{-n} \epsilon)$ to $B_g(x, r_\phi^{-n} \epsilon)$. Since $\phi^n$ preserves the Levi-Civita connection of $g$, we have \[ \exp_x = \phi^{-n} \circ \exp_x \circ D_x \phi^n. \] Hence, $\exp_x$ is an isometry from $B(0, r_\phi^{-n} \epsilon)$ to $B_g(x, r_\phi^{-n} \epsilon)$ for all $n \geq 0$. Since the balls $B_g(x, r_\phi^{-n} \epsilon)$ cover $M$, $\exp_x$ is an isometry from $\mathbb R^q$ to $M$.
\end{proof}

\begin{prop} \label{propProperAction}
Consider a complete connected Riemannian manifold $(M,g)$. If $\mathrm{Sim}(M)$ does not act properly on $M$, then $M$ is (globally) isometric to $\mathbb R^q$ for some $q \geq 0$.
\end{prop}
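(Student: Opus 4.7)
The plan is to reduce to Proposition~\ref{propHomothetyFlat} by producing a single element of $\mathrm{Sim}(M)$ with ratio strictly less than $1$ which admits a fixed point in $M$.

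First I would use the non-properness of the action to extract a compact set $K\subset M$, a sequence $\phi_n\in\mathrm{Sim}(M)$ admitting no convergent subsequence in $\mathrm{Sim}(M)$ (endowed with the compact-open topology), and points $x_n\in K$ with $\phi_n(x_n)\in K$. Let $r_n>0$ be the ratio of $\phi_n$; after extraction I may assume that $r_n\to r\in[0,+\infty]$.

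The central step is to rule out the case $r\in(0,+\infty)$. In that case the $\phi_n$ are uniformly Lipschitz with uniformly bounded Lipschitz constants, and for any $y\in M$ the identity $d_g(\phi_n(y),\phi_n(x_n))=r_n\,d_g(y,x_n)$, together with the completeness of $M$ (which makes closed bounded sets compact by Hopf--Rinow), shows that the family $\{\phi_n\}$ is pointwise relatively compact and equicontinuous. By a diagonal Arzel\`a--Ascoli argument on a compact exhaustion of $M$, a subsequence converges uniformly on compact sets to a Lipschitz map $\phi:M\to M$; applying the same reasoning to $\phi_n^{-1}$, which is $1/r_n$-Lipschitz and sends $\phi_n(x_n)\in K$ back into $K$, yields a Lipschitz limit $\psi$ satisfying $\psi\circ\phi=\phi\circ\psi=\mathrm{id}_M$. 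Hence $\phi$ is a bi-Lipschitz bijective similarity of ratio $r$, that is, $\phi\in\mathrm{Sim}(M)$, and $\phi_n\to\phi$ in $\mathrm{Sim}(M)$, contradicting non-properness. Replacing each $\phi_n$ by $\phi_n^{-1}$ if necessary, I may therefore assume $r_n\to 0$.

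For $n$ large enough, $r_n<1$, so $\phi_n$ is a strict contraction of the complete metric space $(M,d_g)$, and Banach's fixed point theorem furnishes a fixed point $p_n\in M$. Applying Proposition~\ref{propHomothetyFlat} to any such $\phi_n$ immediately yields that $M$ is globally isometric to the Euclidean space $\mathbb{R}^q$ for some $q\geq 0$.

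The main obstacle is the middle step: I must be careful about the topology on $\mathrm{Sim}(M)$ and verify that the Arzel\`a--Ascoli limit of the $\phi_n$ is really a global, bijective similarity (and not merely a Lipschitz self-map) before declaring a contradiction with non-properness. Once the ratios are forced to degenerate, completeness is used only to apply Banach's theorem, after which Proposition~\ref{propHomothetyFlat} finishes the argument.
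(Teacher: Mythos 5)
Your proof is correct and follows essentially the same route as the paper: non-properness forces the ratios of the offending similarities to degenerate (you make explicit, via Arzel\`a--Ascoli, the step the paper dispatches by analogy with the properness of the isometry group of a complete manifold), after which one passes to inverses to get ratios tending to $0$, applies the Banach fixed point theorem to obtain a fixed contracting similarity, and concludes with Proposition~\ref{propHomothetyFlat}. The only cosmetic difference is that the paper exhibits an invariant neighborhood $K'$ with $S_{n_0}(K')\subseteq K'$ before invoking Banach, whereas you apply Banach directly on the complete space $(M,d_g)$; both are valid.
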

\begin{proof}
Since $M$ is complete and connected, the isometry group $\mathrm{Isom}(M)$ acts properly on $M$. In the same way, if $\mathrm{Sim}(M)$ does not act properly on $M$, there exist a compact set $K \subseteq M$ and a sequence $(S_n)$ of similarities such that $K \cap S_n(K) \neq \emptyset$ and the ratio of $S_n$ (written by $r_n$) tends to $+\infty$ or $0$ when $n \to +\infty$. Considering $S_n^{-1}$ instead of $S_n$ if necessary, we may assume that $r_n \to 0$.

Let $K' = \setof{x \in M}{d(x, K) \leq \epsilon}$ for some small $\epsilon > 0$, where $d$ is the distance induced by $g$ in $M$. Then $S_n(K') = \setof{x \in M}{d(x, S_n(K)) \leq r_n \epsilon}$: in particular, for some large enough $n_0 > 0$, $S_{n_0}(K') \subseteq K'$. Thus, $S_{n_0}$ has a fixed point and $M$ is isometric to $\mathbb R^q$ by Proposition~\ref{propHomothetyFlat}.
\end{proof}

\begin{prop} \label{propProductProperActions}
Consider the product of two connected Riemannian manifolds, denoted by $(M, h) = (M_1, h_1) \times (M_2, h_2)$, and a subgroup $G$ of $\mathrm{Sim}(M)$ which preserves the product structure (\emph{i.e.} which is a subgroup of $\mathrm{Sim}(M_1) \times \mathrm{Sim}(M_2)$), and acts on $M$ in a cocompact way. Also assume that $\mathrm{Sim}(M)$ contains elements which are not isometries. Then, either $M_1 = \mathbb R^q$ or $M_2 = \mathbb R^q$, for some $q \geq 0$.
\end{prop}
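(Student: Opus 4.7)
My goal is to construct a similarity of $M$ with a fixed point in $M$ and ratio strictly less than $1$; Proposition~\ref{propHomothetyFlat} will then force $M$ to be isometric to some Euclidean space $\mathbb{R}^q$, and de Rham uniqueness will give that both $M_1$ and $M_2$ are Euclidean.

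\emph{Reduction.} A similarity of a Riemannian product preserves the de Rham decomposition up to a permutation of isomorphic factors, so some power of any non-isometric $\phi \in \mathrm{Sim}(M)$ preserves the splitting $(M_1, M_2)$. Replacing $\phi$ by such a power, and by $\phi^{-1}$ if necessary, one obtains $\phi = (\phi_1, \phi_2) \in \mathrm{Sim}(M_1) \times \mathrm{Sim}(M_2)$ of common ratio $\lambda \in (0, 1)$.

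\emph{Construction via cocompactness.} Fix a compact $K \subseteq M$ with $G \cdot K = M$, a basepoint $x_0 \in M$, and a compact neighborhood $\widehat K$ of $K$ in $M$ containing the closed $\varepsilon$-neighborhood of $K$ for some $\varepsilon > 0$ (such $\widehat K$ exists because the Riemannian manifold $M$ is locally complete). For each $n \geq 0$ choose $h_n \in G$ so that $\psi_n := h_n \circ \phi^n$ satisfies $\psi_n(x_0) \in K$, and write $r_n$ for the ratio of $h_n$, so that $\psi_n$ has ratio $\mu_n := r_n \lambda^n$. Passing to a subsequence assume $\mu_n \to \mu \in [0, +\infty]$, and set $\sigma_n := \psi_n \circ \psi_0^{-1} \in \mathrm{Sim}(M)$; this is a similarity of ratio $\mu_n/\mu_0$ carrying $\psi_0(x_0) \in K$ to $\psi_n(x_0) \in K$. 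When $\mu = 0$, for $n$ large the diameter $(\mu_n/\mu_0)\,\mathrm{diam}(\widehat K)$ of $\sigma_n(\widehat K)$ is smaller than $\varepsilon$; since $\sigma_n(\widehat K)$ meets $K$ this forces $\sigma_n(\widehat K) \subseteq \widehat K$, and Banach's fixed-point theorem applied to the strict contraction $\sigma_n$ on the complete metric space $\widehat K$ supplies $x^* \in \widehat K \subseteq M$ with $\sigma_n(x^*) = x^*$. Proposition~\ref{propHomothetyFlat} then concludes. The case $\mu = +\infty$ is handled symmetrically by replacing $\sigma_n$ with $\sigma_n^{-1}$.

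\emph{Main obstacle.} The intermediate regime $\mu \in (0, +\infty)$ is the delicate one: the $\sigma_n$ have bounded ratios and so the diameter argument fails directly. The saving observation is that $r_n \sim \mu\mu_0^{-1}\lambda^{-n} \to +\infty$, so $G$ itself contains the contractions $h_n^{-1}$ with ratio tending to $0$. Using cocompactness of $G$ once more to move a point of $h_n^{-1}(K)$ back into $K$, one produces an element of $G$ of comparably small ratio which sends some point of $K$ into $K$; the diameter argument then applies to this new element in place of $\sigma_n$, yielding a fixed point inside $\widehat K \subseteq M$. The technical subtleties here---and the main obstacle of the proof---are to ensure that the new group element really has arbitrarily small ratio and that its Banach fixed point lies in the interior of $\widehat K$ rather than on the Cauchy boundary $\widehat M \setminus M$.
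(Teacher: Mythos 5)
There is a genuine gap, and in fact the overall strategy cannot work. Your plan is to produce a similarity of $M$ with a fixed point \emph{in} $M$ and ratio $<1$, and then invoke Proposition~\ref{propHomothetyFlat} to conclude that $M$ is Euclidean. But that conclusion is strictly stronger than the statement of Proposition~\ref{propProductProperActions} and is false in the very situations the proposition is designed for: in Example~\ref{exThird} (equivalently, the Matveev--Nikolayevsky example), $\tilde M = \mathbb R^q \times N$ with $N$ non-flat satisfies all the hypotheses, yet no similarity of $\tilde M$ of ratio $\neq 1$ has a fixed point in $\tilde M$ (the would-be fixed point sits on the Cauchy boundary, at $z=0$). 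So the construction you are attempting must break down, and it does so exactly in the case you flag as the ``main obstacle'', $\mu \in (0,+\infty)$, which you do not actually resolve. The proposed repair there fails: you compose $h_n^{-1}$ (of ratio $1/r_n \to 0$) with some $g_n \in G$ bringing a point of $h_n^{-1}(K)$ back into $K$, but since $h_n^{-1}(K)$ escapes every compact set, cocompactness gives no control whatsoever on the ratio of $g_n$, and the product of the two ratios need not be small. In Example~\ref{exThird} one checks directly that every element of $G$ mapping a point of $K$ into $K$ has ratio bounded away from $0$ and $\infty$, so the ``element of comparably small ratio sending some point of $K$ into $K$'' you need does not exist. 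What your $\mu=0$ and $\mu=\infty$ cases actually prove is essentially Proposition~\ref{propProperAction} (if $\mathrm{Sim}(M)$ does not act properly, $M$ is Euclidean), which is already in the paper and is not the content of this proposition.

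The structural symptom is that your argument never uses the product structure or the hypothesis $G \subseteq \mathrm{Sim}(M_1)\times\mathrm{Sim}(M_2)$, even though the conclusion distinguishes the two factors. The paper's proof is a contradiction argument that plays the factors against each other: assuming both $M_1$ and $M_2$ are non-Euclidean, Proposition~\ref{propProperAction} forces $\mathrm{Sim}(M_1)$ and $\mathrm{Sim}(M_2)$ to act properly on their respective factors, which yields a uniform bound $[1/R,R]$ on the ratio of any similarity of $M_i$ moving a fixed basepoint $x_i$ back into $K_i$. One then takes $\gamma_0 \in \mathrm{Sim}(M_1)$ of ratio $>R^3$ acting on the first factor only, and uses cocompactness of $G$ to find $(\gamma_1,\gamma_2)\in G$ with $\gamma(\gamma_0(x_1),x_2)\in K$; the two ratio bounds then force $\gamma_1$ and $\gamma_2$ to have different ratios, contradicting $(\gamma_1,\gamma_2)\in\mathrm{Sim}(M)$. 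You should abandon the fixed-point route and argue along these lines, isolating the two factors rather than seeking a contraction of the whole of $M$.
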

\begin{proof}
Assume that the conclusion is false. In view of Proposition~\ref{propProperAction}, $\mathrm{Sim}(M_1)$ and $\mathrm{Sim}(M_2)$ act properly on $M_1$ and $M_2$ respectively.

Since $G$ acts cocompactly on $M$, there is a compact set $K \subseteq M$ such that $\mathrm{Sim}(M) \cdot K = M$. We may assume that $K = K_1 \times K_2$, where $\mathrm{Sim}(M_1) \cdot K_1 = M_1$ and $\mathrm{Sim}(M_2) \cdot K_2 = M_2$.

Choose $x_1 \in K_1$. Since $\mathrm{Sim}(M_1)$ acts properly on $M_1$, there is a constant $R > 1$ such that for all $\gamma \in \mathrm{Sim}(M_1)$ satisfying $\gamma(x_1) \in K_1$, the ratio of $\gamma$ is between $R$ and $1/R$. Likewise, choose $x_2 \in K_2$. There is a constant, still called $R$, such that the ratio of any $\gamma \in \mathrm{Sim}(M_2)$ satisfying $\gamma(x_2) \in K_2$ is between $R$ and $1/R$.

We assumed that $\mathrm{Sim}(M)$ contains elements which are not isometries, so there exists $\gamma_0 \in \mathrm{Sim}(M_1)$ whose ratio is greater than $R^3$. And since $G \cdot K = M$, there exists $\gamma = (\gamma_1, \gamma_2) \in G$ such that $\gamma(\gamma_0(x_1), x_2) \in K$. Then, $\gamma_1 \circ \gamma_0(x_1) \in K_1$, so the ratio of $\gamma_1 \circ \gamma_0$ is smaller than $R$: hence, the ratio of $\gamma_1$ is smaller than $1/R^2$. Meanwhile, $\gamma_2 (x_2) \in K_2$, so the ratio of $\gamma_2$ is greater than $1/R$. But since $(\gamma_1, \gamma_2) \in \mathrm{Sim}(M)$, $\gamma_1$ and $\gamma_2$ should have the same ratio, which is impossible.
\end{proof}

In the setting of Theorem~\ref{thmConformalStructure}, since the similarity structure induces a locally metric connection on $M$, Theorem~\ref{thmLocallyMetric} implies that $\tilde M$ admits a de Rham decomposition. Assuming that $\tilde M$ is the product of two manifolds $M_1$ and $M_2$, there is a finite index subgroup of $\pi_1(M)$ which preserves the product structure of $M$: it acts cocompactly on $M$ and contains elements which are not isometries. Thus, we may apply Proposition~\ref{propProductProperActions}, which completes the proof of Theorem~\ref{thmConformalStructure}.

\section{Closures of the leaves} \label{sectTori}
This section is devoted to the proof of Theorem~\ref{thmClosures}.

Consider $\pi_1(M) \subseteq \mathrm{Sim}(\mathbb R^q) \times \mathrm{Sim}(N)$ and define $P$ as the image of $\pi_1(M)$ by the projection onto the second factor, \emph{i.e.}
\[ P = \setof{ b \in \mathrm{Sim}(N) }{ \exists \ a \in \mathrm{Sim}(\mathbb R^q), \quad (a, b) \in \pi_1(M)}. \]
Denote by $\overline P$ the closure of $P$ in $\mathrm{Sim}(N)$, and by $\overline P^0$ the identity component of $\overline P$.

In Example~\ref{exThird}, $\overline P^0$ is the group $\mathbb R$ acting by translation on the first factor of $N = E^u \times (0, +\infty)$. In general, we will prove the following lemma:
\begin{lemme} \label{lemmaP0Abelian}
The group $\overline P^0$ is abelian.
\end{lemme}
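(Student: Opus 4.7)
My plan is to prove Lemma~\ref{lemmaP0Abelian} in two main steps: first, to establish that $\overline P^0$ consists only of isometries of $N$; second, to use the normalizing action on $\overline P^0$ of a non-isometric element of $\pi_1(M)$ in order to force the Lie algebra of $\overline P^0$ to be abelian.

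For the first step, I would show that the ratio character $\rho : \pi_1(M) \to \mathbb{R}_{>0}$ has discrete image. This is a standard consequence of the fact that $\pi_1(M)$ acts properly discontinuously and cocompactly on $\tilde M$ by similarities: ratios cannot accumulate near a value without deck transformations themselves accumulating, contradicting proper discontinuity once cocompactness has been used to bring orbits back to a fundamental domain. Since $\rho(P) = \rho(\pi_1(M))$ is then discrete and hence closed in $\mathbb{R}_{>0}$, so is $\rho(\overline P)$. The restriction of $\rho$ to $\overline P^0$ is a continuous homomorphism from a connected group to a discrete one, so it is trivial, and therefore $\overline P^0 \subseteq \mathrm{Isom}(N)$.

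For the second step, since $M$ is not globally Riemannian there exists $\gamma_0 = (a_0, b_0) \in \pi_1(M)$ with $\rho(\gamma_0) = \lambda \neq 1$; say $\lambda < 1$. Because $P$ is a subgroup of $\mathrm{Sim}(N)$, conjugation by $b_0$ preserves $P$ and hence $\overline P^0$, yielding an automorphism $\theta = \mathrm{Ad}(b_0)$ of $\overline P^0$ whose differential $T$ acts on $\mathfrak{p} = \mathrm{Lie}(\overline P^0)$. Realizing $\mathfrak{p}$ as a finite-dimensional subspace of the Killing fields on $N$, the defining identity $b_0^* g = \lambda^2 g$ translates to the key scaling relation $|TX|_g(b_0 p) = \lambda \cdot |X|_g(p)$ for all $X \in \mathfrak{p}$ and $p \in N$. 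I would then decompose $\mathfrak{p} \otimes \mathbb{C}$ into generalized eigenspaces $V_\mu$ of $T$; since $T$ is a Lie algebra automorphism we have $[V_\mu, V_\nu] \subseteq V_{\mu\nu}$, and the plan is to combine iteration of the scaling relation with the cocompactness of the $P$-action on $N$ (inherited by projection from the cocompact $\pi_1(M)$-action on $\tilde M$) in order to show that the appearance of a nonzero bracket in $V_{\mu\nu}$ would be incompatible with the covariance of Killing-field norms under the discrete (cyclic) ratio group $\rho(P)$. Hence $[\mathfrak{p}, \mathfrak{p}] = 0$, and $\overline P^0$, being connected, is abelian.

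The main obstacle is the spectral analysis of $T$ in the last step. Example~\ref{exThird} shows that eigenvalues of $T$ need not equal $\lambda$---they can be as large as $\lambda^{-q}$---so one cannot simply equate all eigenvalues and conclude by the usual trick that $\mu\nu \neq \mu$. Instead, the argument must carefully reconcile two sources of scaling, namely conjugation by $b_0$ and covariance under $P$-translates, and use the discreteness of $\rho(P)$ to organize the multiplicative bookkeeping so that no Killing field of weight $\mu\nu$ can arise from a nonzero bracket $[V_\mu, V_\nu]$ while remaining bounded on a compact fundamental domain of $P$.
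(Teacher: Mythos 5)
Both halves of your plan have genuine gaps, and they sit exactly where the real content of the lemma lives.

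First, the discreteness of the ratio homomorphism $\rho : \pi_1(M) \to \mathbb R_{>0}$ is not a ``standard consequence'' of proper discontinuity and cocompactness, and the argument you sketch does not prove it. Bringing an orbit back to a fundamental domain only controls the finitely many deck transformations that return a fixed compact set to itself; it says nothing about the ratios of elements that move that compact set far away. Since $\pi_1(M)$ is finitely generated, $\rho(\pi_1(M))$ is a finitely generated subgroup of $\mathbb R_{>0}$, and such subgroups (e.g.\ the one generated by $2$ and $3$) are typically dense; nothing in your sketch excludes this. In the paper, the statement you are trying to reach cheaply --- that $\overline P^0$ acts by isometries --- is precisely Lemma~\ref{lemmaNilpotent}, and it is obtained by viewing $\pi_1(M)$ as a lattice in $\mathrm{Sim}(\tilde M) \cap (\mathrm{Sim}(\mathbb R^q) \times \overline P)$ (Lemma~\ref{lemmaLattice}, which itself needs the auxiliary complete transverse metric from Theorem~\ref{thmFoliations} to make $\overline P$ a Lie group acting properly) and applying the Auslander--Raghunathan theorem on nilradicals of lattices (Proposition~\ref{propRaghu}): the projection of the lattice to $H \ltimes (L/\mathrm{Nil}(L))$ is simultaneously discrete and dense, forcing $H$ to be trivial. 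Discreteness of $r(\overline P)$ is only derived afterwards, as a consequence, and only for $\dim N = 2$ (Lemma~\ref{lemmaDiscreteImage}). So your step 1 presupposes something at least as strong as the hard half of the proof.

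Second, your step 2 is a plan rather than a proof: you explicitly leave ``the spectral analysis of $T$'' as the main obstacle, and that is where the lemma would have to be established. It is moreover doubtful that the proposed mechanism can succeed. A single automorphism satisfying a scaling relation can at best force nilpotency, and only when all of its eigenvalues lie on one side of $1$ --- which, as you note yourself, fails here since $\lambda < 1$ while eigenvalues can equal $\lambda^{-q} > 1$. The method is also blind to a possible compact semisimple factor of $\overline P^0$ (say an $SO(3)$), on whose Lie algebra $\mathrm{Ad}(b_0)$ has all eigenvalues of modulus $1$; your scaling relation then merely says the corresponding orbits shrink toward the contracting end of $N$, which is not a contradiction. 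The paper does not extract abelianness from the conjugation action at all: it splits $\tilde P_I = K \times L$ with $K$ maximal compact normal, shows $L$ is nilpotent and then abelian by playing discreteness (Propositions~\ref{propRaghu} and~\ref{propNilpotentCenter}) against density of the lattice projections (Lemmas~\ref{lemmaNilpotent} and~\ref{lemmaLAbelian}), and kills $[K,K]$ by a separate commutator argument (Lemma~\ref{lemmaKAbelian}). None of these ingredients has a counterpart in your outline, so the gap is not a matter of bookkeeping but of a missing core argument.
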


\subsection{Generalities on lattices in Lie groups}

Here, we state classical general facts about lattices in Lie groups: in this paper, a lattice is a \emph{discrete, cocompact subgroup} of a Lie group.

\begin{prop} \label{propEquivLattice}
Let $G$ be a Lie group, $\Gamma$ a lattice in $G$, and $N$ a normal Lie subgroup of positive dimension in $G$. Then the following two properties are equivalent:
\begin{enumerate}
\item The group $\Gamma \cap N$ is a lattice in $N$.
\item The image of $\Gamma$ by the projection $G \to G / N$ is a lattice in $G / N$.
\end{enumerate}
\end{prop}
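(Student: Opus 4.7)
The plan is to reduce the equivalence to a single geometric condition on the set $\Gamma N \subseteq G$: that it is closed. I would show that each of (1) and (2), after stripping its automatic half, is equivalent to this condition, and then chain the two equivalences together.

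First I would dispatch the automatic parts. Since $\Gamma$ is discrete in $G$, the intersection $\Gamma \cap N$ is automatically discrete in $N$, so (1) is equivalent to cocompactness of $\Gamma \cap N$ in $N$. For the image $\bar\Gamma = \Gamma N/N \cong \Gamma/(\Gamma \cap N)$, cocompactness in $G/N$ is automatic: the projection $G \to G/N$ factors through the $\Gamma$-action and yields a natural continuous surjection $\Gamma \backslash G \twoheadrightarrow \bar\Gamma \backslash (G/N)$, so the target is compact because $\Gamma \backslash G$ is. Hence (2) is equivalent to discreteness of $\bar\Gamma$ in $G/N$. I would then show that discreteness of $\bar\Gamma$ is equivalent to $\Gamma N$ being closed in $G$. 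The forward direction is immediate: a discrete subgroup of the Hausdorff group $G/N$ is closed (a short sequence argument using a neighborhood of the identity meeting $\bar\Gamma$ only at $e$), and $\Gamma N$ is its preimage under $G \to G/N$. Conversely, if $\Gamma N$ is closed then $\bar\Gamma$ is a closed subgroup of $G/N$, hence a Lie subgroup by Cartan's closed-subgroup theorem; but $\bar\Gamma$ is a quotient of the countable group $\Gamma$ (countable because $\Gamma$ is a discrete subspace of the second-countable manifold $G$), and any countable Lie subgroup is forced to be $0$-dimensional, i.e.\ discrete.

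Finally I would prove that cocompactness of $\Gamma \cap N$ in $N$ is equivalent to $\Gamma N$ being closed in $G$. For ``$\Rightarrow$'', if $K \subseteq N$ is compact with $(\Gamma \cap N)K = N$, then $\Gamma N = \Gamma K$, and closedness of $\Gamma K$ follows from a routine extraction: if $\gamma_n k_n \to g$, pass to a subsequence with $k_n \to k \in K$, so $\gamma_n \to g k^{-1} \in \Gamma$ (using that the discrete subgroup $\Gamma$ of a Lie group is closed), whence $g \in \Gamma K$. For ``$\Leftarrow$'', the inclusion $N \hookrightarrow G$ descends to a continuous bijection $(\Gamma \cap N) \backslash N \to \Gamma \backslash \Gamma N \subseteq \Gamma \backslash G$; if $\Gamma N$ is closed in $G$, then the right-hand side is a closed subset of the compact space $\Gamma \backslash G$, hence compact. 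The step I expect to be the main obstacle is verifying that this bijection is actually a \emph{homeomorphism} rather than merely a continuous bijection, i.e.\ that the quotient topology on $(\Gamma \cap N) \backslash N$ coincides with the subspace topology it inherits from $\Gamma \backslash G$. This comes down to openness: the quotient map $G \to \Gamma \backslash G$ is open, and applied to open sets of the form $U \subseteq N$ this gives the needed identification of topologies, but spelling this out carefully is the one non-formal ingredient of the argument.
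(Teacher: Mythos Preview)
Your approach is essentially the paper's: both reduce (1) and (2), after stripping their automatic halves, to the single condition that $\Gamma N$ is closed in $G$, and then chain the equivalences together. The paper's version is a terse string of $\iff$'s; the step you correctly flag as the one nontrivial ingredient---that $(\Gamma\cap N)\backslash N$ is homeomorphic to the image of $N$ in $\Gamma\backslash G$---is exactly the link the paper passes over without comment, and your fix via openness of the quotient map goes through once you note that $N$ is open in the closed subgroup $\Gamma N$ (they share a Lie algebra since $\Gamma$ is discrete).
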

\begin{proof}
We have the following chain of equivalences: $\Gamma \cap N$ is a lattice in $N$
$\iff$ $N / \Gamma \cap N$ is compact
$\iff$ the image of $N$ in $G / \Gamma$ is compact
$\iff$ the image of $N$ in $G / \Gamma$ is closed in $G / \Gamma$
$\iff$ the subgroup of $G$ generated by $N \cup \Gamma$ in $G$ is closed
$\iff$ the image of $\Gamma$ by the projection $G \to G/N$ is closed in $G / N$
$\iff$ the image of $\Gamma$ by the projection $G \to G/N$ is a Lie subgroup of $G / N$
$\iff$ the image of $\Gamma$ by the projection $G \to G/N$ is discrete
$\iff$ the image of $\Gamma$ by the projection $G \to G / N$ is a lattice in $G / N$.
\end{proof}

Proposition~\ref{propEquivLattice} appears, for example, in~\cite{MR1016093}.

Propositions~\ref{propNeutral}, \ref{propNilpotentCenter} and~\ref{propRaghu} give sufficient conditions for the two equivalent assertions of Proposition~\ref{propEquivLattice} to hold true.

The following proposition is classical: since the proof is short, we recall it here.

\begin{prop} \label{propNeutral}
Let $G$ be a Lie group and $\Gamma$ a lattice in $G$. Then $\Gamma \cap G^0$ is a lattice in $G^0$ (where $G^0$ is the identity component of $G$).
\end{prop}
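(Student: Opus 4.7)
The plan is to deduce this from Proposition~\ref{propEquivLattice} applied to the normal subgroup $N = G^0$. Since $G^0$ is both open and closed in $G$, the quotient $G/G^0$ carries the discrete topology, which means the two assertions of Proposition~\ref{propEquivLattice} simplify considerably: the image $\pi(\Gamma)$ is automatically discrete in $G/G^0$, and being a ``lattice'' there is the same as being a finite-index subgroup. Likewise, $\Gamma \cap G^0$ is automatically discrete in $G^0$ (since $\Gamma$ was discrete in $G$), so the only real content is the cocompactness statement.

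The key step is therefore to show that $\pi(\Gamma)$ has finite index in $G/G^0$. For this I would use the cocompactness of $\Gamma$ in $G$ directly. The quotient map $G \to G/\Gamma$ is a continuous surjection onto a compact space; passing further to the quotient by the (left) $G^0$-action yields a continuous surjection
\[
G/\Gamma \twoheadrightarrow G^0 \backslash G / \Gamma \;\cong\; (G/G^0)/\pi(\Gamma).
\]
The target is a quotient of the discrete space $G/G^0$, so it is discrete; being also the continuous image of the compact $G/\Gamma$, it must be finite. That is exactly the statement that $\pi(\Gamma)$ has finite index in $G/G^0$.

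Once this is established, Proposition~\ref{propEquivLattice} immediately yields that $\Gamma \cap G^0$ is a lattice in $G^0$, completing the proof. I do not anticipate any serious obstacle here; the main subtlety is simply the observation that ``lattice in a discrete group'' collapses to ``finite-index subgroup'', which lets the cocompactness of $\Gamma$ transfer neatly to cocompactness of $\Gamma \cap G^0$ in $G^0$ via a single double-coset argument.
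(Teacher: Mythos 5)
Your argument is correct. You reduce the statement to Proposition~\ref{propEquivLattice} with $N = G^0$: since $G^0$ is open and normal, $G/G^0$ is discrete, a lattice there is just a finite-index subgroup, and the finiteness of $\pi(\Gamma)$'s index follows because the double-coset space $G^0 \backslash G / \Gamma \cong (G/G^0)/\pi(\Gamma)$ is simultaneously discrete (quotient of the discrete $G/G^0$) and compact (continuous image of $G/\Gamma$). All of these steps check out; there is no circularity, since the paper's proof of Proposition~\ref{propEquivLattice} does not invoke Proposition~\ref{propNeutral}.

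The paper takes a more direct route that does not pass through Proposition~\ref{propEquivLattice}: it observes that the $G^0$-orbits on $G/\Gamma$ are open (because $G^0$ is open in $G$), hence each orbit is also closed, and therefore the particular orbit $G^0/(\Gamma \cap G^0)$ is a closed subset of the compact space $G/\Gamma$, hence compact. The two arguments hinge on the same fact --- openness of the identity component --- but package it differently: the paper extracts compactness of one orbit directly, while you show the orbit space is finite and then transfer cocompactness back through the equivalence. Your version buys a uniform treatment (it makes Proposition~\ref{propNeutral} a literal corollary of Proposition~\ref{propEquivLattice}) and the extra information that $\Gamma$ meets only finitely many components of $G$; the paper's version is more self-contained and avoids relying on the longer chain of equivalences in Proposition~\ref{propEquivLattice}. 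Either is acceptable.
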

\begin{proof}
The group $\Gamma \cap G^0$ is discrete since it is a subgroup of the discrete group $\Gamma$. There remains to show that $\Gamma \cap G^0$ is cocompact in $G^0$.

The group $G^0$ acts on $G$ by left translation. The orbits are the connected components of $G$, so they are open. Thus, the orbits of the action of $G^0$ on $G/\Gamma$ by left translation are also open. Therefore, each orbit is closed (since its complement is a union of open orbits). In particular, the orbit $G^0 / (\Gamma \cap G^0)$ is closed in $G / \Gamma$ which is compact, so $G^0 / (\Gamma \cap G^0)$ is compact.
\end{proof}

The following proposition is also classical, see for example~\cite{MR0507234} page 40:

\begin{prop} \label{propNilpotentCenter} If $G$ is nilpotent and $\Gamma \subseteq G$ is a lattice, $\Gamma \cap Z(G)$ is a lattice in $Z(G)$, where $Z(G)$ is the center of $G$.
\end{prop}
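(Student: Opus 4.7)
The plan is induction on the nilpotency class $c$ of $G$, preceded by a reduction to the case of connected $G$ via Proposition~\ref{propNeutral} (applied to the discrete normal subgroup $\Gamma \cap G^0$ and using that $Z(G) \cap G^0$ captures $Z(G)$ up to finitely many components). The base case $c = 1$ is trivial since then $G$ is abelian and $Z(G) = G$.

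For the inductive step with $c \geq 2$, the key object is $C := G_c = [G, G_{c-1}]$, the last nontrivial term of the descending central series; it is a closed connected abelian subgroup of $Z(G)$. The two-part strategy is as follows. \textbf{Part 1:} establish that $\Gamma \cap C$ is a lattice in $C$. \textbf{Part 2:} apply Proposition~\ref{propEquivLattice} to the central subgroup $C$, so that the image of $\Gamma$ in $G/C$ is a lattice in the nilpotent group $G/C$ of class $c - 1$. The induction hypothesis then produces a lattice in $Z(G/C)$; tracking $Z(G)/C$ as a closed subgroup of $Z(G/C)$ and applying Proposition~\ref{propEquivLattice} a second time to the central extension $1 \to C \to Z(G) \to Z(G)/C \to 1$ (with careful bookkeeping about which piece of $Z(G/C)$ one is intersecting) concludes that $\Gamma \cap Z(G)$ is a lattice in $Z(G)$.

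The main obstacle is Part 1. My approach would be to exploit that since $C \subseteq Z(G)$, the commutator map
\[
G \times G_{c-1} \longrightarrow C, \quad (x, y) \longmapsto [x, y]
\]
is a group bi-homomorphism in both variables (central commutators are bilinear). A parallel induction on $c$ shows that $\Gamma \cap G_{c-1}$ is a lattice in $G_{c-1}$; combined with cocompactness of $\Gamma$ in $G$ and the bi-linearity above, one would show that the subgroup of $\Gamma \cap C$ generated by $\{[\gamma, \delta] : \gamma \in \Gamma,\ \delta \in \Gamma \cap G_{c-1}\}$ is cocompact in $C$. This commutator–bilinearity analysis is the technical crux of the proof, and is where careful tracking of discreteness and cocompactness through the central tower is needed, mirroring the classical argument found in Raghunathan's \emph{Discrete Subgroups of Lie Groups}, Chapter II.
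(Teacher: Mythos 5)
The paper itself does not prove this proposition---it is quoted as classical, with a pointer to Raghunathan's book---so the only question is whether your argument stands on its own. Your Part~1 is the standard commutator argument and is sound in outline for $G$ connected and simply connected: the commutators $[\gamma,\delta]$ with $\gamma\in\Gamma$ and $\delta\in\Gamma\cap G_{c-1}$ lie in the discrete group $\Gamma\cap C$, and since $\log\Gamma$ spans $\mathfrak g$ and $\log(\Gamma\cap G_{c-1})$ spans $\mathfrak g_{c-1}$, their brackets span $\mathfrak g_c$, so they generate a cocompact subgroup of $C\cong\mathbb R^k$. (This leans on the unproved ``parallel induction'' that $\Gamma\cap G_{c-1}$ is a lattice, and your reduction to connected $G$ via Proposition~\ref{propNeutral} is not justified as stated---$Z(G)\cap G^0$ need not coincide with $Z(G^0)$, so the connected case applied to $G^0$ does not directly control $\Gamma\cap Z(G)$---but the application in the paper only needs the connected, simply connected case.)

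The genuine gap is in Part~2. After quotienting by $C$, the induction hypothesis gives you that $\bar\Gamma\cap Z(G/C)$ is a lattice in $Z(G/C)$; what you need is that $\bar\Gamma\cap\bigl(Z(G)/C\bigr)$ is a lattice in $Z(G)/C$, and $Z(G)/C$ is in general a \emph{proper} closed connected subgroup of $Z(G/C)$. For $G=H_3\times\mathbb R$ (Heisenberg times a line), $C=[G,G]$ is one-dimensional, $Z(G/C)=G/C\cong\mathbb R^3$, while $Z(G)/C$ is a single line inside it; a lattice in $\mathbb R^3$ meets an arbitrary line only at the origin in general, so no amount of ``careful bookkeeping'' rescues the step. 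What must be proved is that the subgroup $Z(G)/C$ is \emph{rational} with respect to $\bar\Gamma$, and that is essentially the entire content of the proposition. The classical proof supplies exactly this ingredient via Mal'cev theory: $\log\Gamma$ spans a $\mathbb Q$-form of $\mathfrak g$, the center $\mathfrak z=\ker(\mathrm{ad})$ is defined over $\mathbb Q$ because $\mathrm{ad}$ is (equivalently, $Z(G)$ is the centralizer of $\Gamma$, since $\log\Gamma$ spans $\mathfrak g$), and a connected subgroup defined over $\mathbb Q$ meets $\Gamma$ in a lattice. Your inductive scheme never produces this rationality statement, so the proof is incomplete at its crucial point; also note that your second invocation of Proposition~\ref{propEquivLattice} is not literally applicable, since that proposition presupposes that $\Gamma\cap Z(G)$ is already a lattice in $Z(G)$, which is the conclusion sought.
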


We now recall Bieberbach's theorem (see~\cite{MR1511623}):
\begin{thm}[Bieberbach, 1911] \label{thmBieberbach}
Consider a Euclidean space $\mathbb R^q$, and a lattice $G$ in $\mathrm{Isom}(\mathbb R^q) = O(\mathbb R^q) \ltimes \mathbb R^q$. Then $G \cap \mathbb R^q$ is a lattice in $\mathbb R^q$.
\end{thm}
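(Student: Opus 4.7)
The plan is to reduce Bieberbach's theorem to showing that the image of $G$ under the linear-part projection $L : \mathrm{Isom}(\mathbb R^q) = O(\mathbb R^q) \ltimes \mathbb R^q \to O(\mathbb R^q)$ is \emph{finite}. This reduction is immediate from Proposition~\ref{propEquivLattice} applied with $N = \mathbb R^q$ (a normal subgroup of $\mathrm{Isom}(\mathbb R^q)$): the subgroup $T := G \cap \mathbb R^q$ is a lattice in $\mathbb R^q$ if and only if the image of $G$ in $\mathrm{Isom}(\mathbb R^q)/\mathbb R^q \cong O(\mathbb R^q)$ is a lattice there. Since $O(\mathbb R^q)$ is compact, any lattice in it is finite, so it suffices to show that $L(G)$ is discrete.

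The main tool is the Zassenhaus commutator estimate. Starting from the identity
\[ [A, B] - I = \bigl( (A - I)(B - I) - (B - I)(A - I) \bigr) B^{-1} A^{-1}, \]
one immediately gets $\norm{[A, B] - I} \leq 2 \norm{A - I}\,\norm{B - I}$ in operator norm; hence on a sufficiently small neighbourhood $U$ of $I$ in $O(\mathbb R^q)$ the commutator map strictly contracts. A parallel direct computation in $\mathrm{Isom}(\mathbb R^q)$ shows that for $g = (A, a)$ and $h = (B, b)$ the commutator $[g, h]$ has linear part $[A, B]$ and translation part bounded linearly by $(\norm{A - I} + \norm{B - I}) \max(\abs{a}, \abs{b})$: when the linear parts are close to $I$ and the translation parts stay in a bounded region, commutators shrink in both coordinates simultaneously.

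With these tools in hand I would argue by contradiction. If $L(G)$ were infinite, the compactness of $O(\mathbb R^q)$ would yield a sequence $g_n = (A_n, a_n) \in G$ with $A_n \neq I$ and $A_n \to I$; the cocompactness of the $G$-action on $\mathbb R^q$ would then let one confine the translations $a_n$ to a bounded region --- either by conjugating $g_n$ against elements of $G$ so as to bring each $a_n$ into a fixed fundamental domain, or by iterating commutators against a well-chosen fixed element $h \in G$ whose linear part fails to commute with the $A_n$. The iterated commutators then yield a sequence of nontrivial elements of $G$ converging to the identity of $\mathrm{Isom}(\mathbb R^q)$, contradicting discreteness of $G$.

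The main obstacle is genuinely the coordination of linear and translation parts throughout the iteration: one cannot simply subtract a vector in $\mathbb R^q$ from $a_n$ because the result need not lie in $G$, so controlling translations absolutely requires the cocompactness hypothesis. The cleanest rigorous packaging of this circle of ideas is Zassenhaus's neighbourhood lemma, which supplies a neighbourhood $U$ of the identity in $\mathrm{Isom}(\mathbb R^q)$ such that $\langle G \cap U \rangle$ is nilpotent; a short separate analysis of nilpotent subgroups of $\mathrm{Isom}(\mathbb R^q)$, combined with cocompactness, then forces this subgroup to be generated, modulo finite data, by pure translations, which yields the desired conclusion.
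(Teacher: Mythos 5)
First, a point of calibration: the paper does not prove this statement at all --- it quotes Bieberbach's theorem as a classical result with a citation, so there is no in-paper proof to compare against; your proposal has to stand on its own as a proof of a genuinely nontrivial classical theorem. Your opening reduction is correct and is a nice use of the paper's own machinery: applying Proposition~\ref{propEquivLattice} with $N = \mathbb R^q$ normal in $\mathrm{Isom}(\mathbb R^q)$, the claim is equivalent to the image of $G$ in $O(\mathbb R^q)$ being a lattice, and since a discrete subgroup of a compact group is finite (hence cocompact), it suffices to prove that the point group $L(G)$ is discrete. This is exactly the right reformulation, and the Zassenhaus commutator estimate $\norm{[A,B]-I} \leq 2\norm{A-I}\,\norm{B-I}$ is the right tool.

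However, the core of the argument --- which is the actual content of Bieberbach's first theorem --- is left as a plan with two concrete gaps. (a) Your first proposed method for controlling translation parts fails: conjugating $(A,a)$ by $(B,b) \in G$ yields $(BAB^{-1},\,(I - BAB^{-1})b + Ba)$, so when $A$ is close to $I$ the translation length is changed by at most $\norm{A - I}\,\abs{b}$ and is essentially conjugation-invariant; you cannot bring $a_n$ into a fundamental domain this way. (The second method, iterated commutators against a fixed $h$, does work: the recursion $\abs{b_{n+1}} \leq \norm{A-I}\,\abs{b_n} + \norm{B_n - I}\,\abs{a}$ with $\norm{B_n - I} \to 0$ geometrically forces $\abs{b_n} \to 0$ with no a priori bound on $b_0$ --- but you would need to commit to this and verify it.) (b) More seriously, the intended contradiction does not materialize: the iterated commutators converge to the identity, so discreteness forces them to \emph{equal} the identity after finitely many steps, and a sequence that is eventually trivial contradicts nothing. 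In the standard proofs (Buser, Thurston) this forced vanishing is not the contradiction but the \emph{starting point}: one deduces that elements whose rotational parts lie within $1/2$ of $I$ have commuting rotational parts, obtains a normal abelian (or nilpotent) finite-index subgroup, and then a separate argument using cocompactness shows this subgroup must consist of pure translations --- e.g.\ because a nontrivial rotational part would have to preserve the translation sublattice it normalizes while fixing a vector, leading to a non-cocompact orbit or a contradiction with discreteness. That second half, which you compress into ``a short separate analysis,'' is where the theorem is actually proved, and it is missing. As written, the proposal is a correct reduction plus an accurate bibliography of the ideas involved, but not a proof.
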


In order to state a more general version of Theorem~\ref{thmBieberbach}, we define the \emph{radical} and the \emph{nilradical} of a Lie group.

\begin{definition}
The \emph{radical} of a Lie group $G$, written by $\mathrm{Rad}(G)$, is the unique maximal normal connected closed solvable subgroup of $G$.
The \emph{nilradical} of a Lie group $G$, written by $\mathrm{Nil}(G)$, is the unique maximal normal connected closed nilpotent subgroup of $G$.
\end{definition}

\begin{prop} \label{propRaghu}
Let $G$ be a connected Lie group of the form $G = S \ltimes R$, where $R = \mathrm{Rad}(G)$ and $S$ is a semisimple Levi subgroup (such a decomposition always exists if $G$ is simply connected). Assume that the kernel of the action of $S$ on $R$ does not contain compact factors. If $\Gamma$ is a lattice in $G$, then $\Gamma \cap \mathrm{Nil}(G)$ is a lattice in $\mathrm{Nil}(G)$.
\end{prop}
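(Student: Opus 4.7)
The plan is to reduce the statement in two stages: from $G$ down to its radical $R$, and then from $R$ down to $\mathrm{Nil}(R)$. I begin by observing that $\mathrm{Nil}(G)=\mathrm{Nil}(R)$. Indeed, $\mathrm{Nil}(G)$ is nilpotent and normal in $G$, hence contained in the maximal solvable normal subgroup $R$, so $\mathrm{Nil}(G)\subseteq\mathrm{Nil}(R)$. Conversely, $\mathrm{Nil}(R)$ is characteristic in $R$ and $R$ is normal in $G$, so $\mathrm{Nil}(R)$ is a connected normal nilpotent subgroup of $G$, giving $\mathrm{Nil}(R)\subseteq\mathrm{Nil}(G)$. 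Thus it suffices to prove first that $\Gamma\cap R$ is a lattice in $R$, and then that $(\Gamma\cap R)\cap\mathrm{Nil}(R)$ is a lattice in $\mathrm{Nil}(R)$.

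For the first reduction, I apply Proposition~\ref{propEquivLattice} to the normal subgroup $R\trianglelefteq G$: showing that $\Gamma\cap R$ is a lattice in $R$ is equivalent to showing that the image of $\Gamma$ in $G/R\simeq S$ is discrete. This is the content of the Auslander-type density theorem (see Raghunathan, \emph{Discrete subgroups of Lie groups}, Chapter 8): the projection of a lattice $\Gamma\subseteq G=S\ltimes R$ to the semisimple quotient $S$ is discrete, provided no compact simple factor of $S$ acts trivially on $R$, which is precisely the hypothesis of the proposition. The mechanism is that the closure of the projected image is generated by a discrete part and by compact factors sitting in the kernel of the $S$-action on $R$; ruling the latter out forces discreteness.

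For the second reduction, $\Gamma\cap R$ is now a lattice in the connected solvable Lie group $R$. A classical theorem of Mostow on lattices in solvable Lie groups (see Raghunathan, Chapter~3) then implies that $(\Gamma\cap R)\cap\mathrm{Nil}(R)$ is a lattice in $\mathrm{Nil}(R)$. Combined with $\mathrm{Nil}(R)=\mathrm{Nil}(G)$ and the obvious identity $(\Gamma\cap R)\cap\mathrm{Nil}(R)=\Gamma\cap\mathrm{Nil}(G)$, this yields the conclusion.

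The main obstacle is concentrated in the first reduction. The passage from $R$ to $\mathrm{Nil}(R)$ inside a solvable group is a well-known and purely solvable statement, whereas the discreteness of the image of $\Gamma$ in $S$ is genuinely delicate and is precisely where the assumption on the absence of compact factors in the kernel of the Levi action is indispensable: without it, the projection of $\Gamma$ could be dense in a compact subgroup of $S$, in which case $\Gamma\cap R$ would fail to be cocompact in $R$ and the whole argument would collapse.
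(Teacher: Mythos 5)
First, note that the paper does not actually prove Proposition~\ref{propRaghu}: it is quoted from the literature, with the explicit warning that the proof originally given in~\cite{MR0507234} is flawed and that a correct one appears in the appendix of~\cite{MR1038220}. Your overall architecture --- reduce to showing that $\Gamma\cap R$ is a lattice in $R$ via Proposition~\ref{propEquivLattice} applied to the quotient $G\to G/R\simeq S$, then descend from $R$ to $\mathrm{Nil}(R)=\mathrm{Nil}(G)$ by Mostow's theorem on lattices in solvable groups --- is the standard skeleton of the known proofs, and your identification $\mathrm{Nil}(G)=\mathrm{Nil}(R)$ as well as the final Mostow step are correct.

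The gap is in the first reduction, which is where all the content of the proposition lives. By Proposition~\ref{propEquivLattice}, the discreteness of $\pi(\Gamma)$ in $S$ is \emph{equivalent} to the statement that $\Gamma\cap R$ is a lattice in $R$; so you have not reduced the problem, you have restated it and attributed it to ``the Auslander-type density theorem.'' What Auslander's theorem (\cite{MR0123637}, see Chapter~8 of~\cite{MR0507234}) actually gives is that the identity component $T$ of $\overline{\pi(\Gamma)}$ is solvable, hence a torus normalized by $\pi(\Gamma)$. Your ``mechanism'' then asserts that $T$ must be a compact factor of $S$ lying in the kernel of the action on $R$. This does not follow: $T$ is normal in $\overline{\pi(\Gamma)}$, not in $S$, and the Borel density argument that would promote it to a normal subgroup of $S$ is available only on the factors of $S$ with no compact part --- precisely the factors that are irrelevant here. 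On a compact factor of $S$ acting \emph{nontrivially} on $R$ (the case $S=SO(q)$, $R=\mathbb R^q$, i.e.\ Theorem~\ref{thmBieberbach}, which is exactly the case this paper needs), nothing prevents $T$ a priori from being a nontrivial torus not contained in any kernel; ruling this out is the whole difficulty of Bieberbach's theorem and requires the interplay between the rotation parts and the translation lattice. In short, the crucial step is cited to the very source whose proof of it is known to be wrong, and the heuristic offered in its place is circular on compact factors. To make the argument rigorous you would have to reproduce the proof from the appendix of~\cite{MR1038220} (or, for the special cases actually used in this paper, prove Bieberbach's theorem itself).
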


Proposition~\ref{propRaghu} generalizes Theorem~\ref{thmBieberbach}. It was initially stated in~\cite{MR0507234} as a corollary of a theorem by Auslander~\cite{MR0123637}, but Raghunathan's proof turned out to be flawed. A correct proof is available in the appendix of~\cite{MR1038220}.

In order to apply Proposition~\ref{propRaghu}, the following will be useful:
\begin{prop} \label{propFactor}
Consider a connected, simply connected Lie group $G$ and its maximal normal compact subgroup $K$. Then there is a subgroup $L$ of $G$ such that $G = K \times L$.
\end{prop}
\begin{proof}
Since the group $G$ is simply connected, it admits a decomposition $G = S \ltimes R$, where $S$ is semisimple and $R = \mathrm{Rad}(G)$ is solvable. First, notice that $K \cap R = \{0\}$, because a solvable, simply connected Lie group cannot have any proper compact subgroup. Thus, the restriction to $K$ of the projection $G \to G / R$ is injective and hence, since it is normal in $G$,  $K$ projects isomorphically to a semi-simple compact factor of $G/R$. This implies that there exists a section $\sigma: G/R \to G$ whose image $S'$ contains $K$. Since $K$ is a normal subgroup of the semisimple group $S'$, there exists a subgroup $L'$ of $S'$ such that $S' = K \times L'$, and therefore $G = S' \ltimes R = (K \times L') \ltimes R$. Finally, since $K$ is normal in $G$, it cannot act on $R$, so $G = K \times (L' \ltimes R)$ and the proposition is proved.
\end{proof}
Notice that there always exists a maximal normal compact subgroup: if $K_1$ and $K_2$ are two normal compact subgroups, then $K_1 K_2$ is also a normal compact subgroup.

\subsection{Proof of Lemma~\ref{lemmaP0Abelian}}

\begin{lemme} \label{lemmaLieGroup}
The group $\overline P$ is a Lie group which acts properly on $N$.
\end{lemme}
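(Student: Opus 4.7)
I will treat the two claims in turn.

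\emph{Lie group structure.} Since $N$ is the non-flat factor of $\tilde M$ in Case 3 of Theorem~\ref{thmConformalStructure}, it has irreducible holonomy, and Lemma~\ref{lemmaAffSim} gives $\mathrm{Sim}(N) = \mathrm{Aff}(N, \nabla)$, where $\nabla$ is the Levi-Civita connection of $g$. The group of affine automorphisms of a connection on a (connected) manifold is a finite-dimensional Lie group by a classical theorem (see, e.g., Kobayashi's book on transformation groups), so $\mathrm{Sim}(N)$ is a Lie group. Its closed subgroup $\overline{P}$ is then a Lie subgroup by Cartan's closed subgroup theorem.

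\emph{Proper action.} I argue by contradiction. If $\overline{P}$ does not act properly, there exist a compact $K \subseteq N$ and a sequence $(b_n) \subseteq \overline{P}$ with $b_n(K) \cap K \neq \emptyset$ and no convergent subsequence in $\overline{P}$. Let $\lambda_n > 0$ denote the ratio of $b_n$. Suppose first that $(\lambda_n)$ stays in a compact subset of $\mathbb{R}_{>0}$. Fixing $y_0 \in K$, one checks that $b_n(y_0)$ remains in a compact set (because $b_n$ sends some point of $K$ into $K$ and $\lambda_n$ is bounded), and that the derivatives $D_{y_0} b_n$ are bounded in operator norm since $\|D_{y_0} b_n\| = \lambda_n$. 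An Ascoli-type compactness theorem for affine maps of a connection (a bounded image and bounded derivative at one point force a convergent subsequence) then produces a convergent subsequence of $(b_n)$ in $\mathrm{Aff}(N, \nabla) = \mathrm{Sim}(N)$, a contradiction. So, after extraction and possibly replacing $b_n$ by $b_n^{-1}$, I may assume $\lambda_n \to 0$.

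The remaining case mimics Proposition~\ref{propProperAction}, but in the non-complete setting. Using local compactness of $N$ together with compactness of $K$, I choose $\epsilon > 0$ small enough that $K' := \{x \in N : d_g(x, K) \leq \epsilon\}$ is compact. For $n$ large, $\mathrm{diam}(b_n(K)) = \lambda_n \cdot \mathrm{diam}(K) \to 0$ and $b_n(K) \cap K \neq \emptyset$ together imply $b_n(K') \subseteq K'$. As a contraction of the compact (hence complete) metric space $K'$, the map $b_n$ has a fixed point by the Banach fixed point theorem, whence Proposition~\ref{propHomothetyFlat} forces $N$ to be isometric to a Euclidean space, contradicting its non-flatness. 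The main obstacle is exactly this non-completeness of $N$, which prevents a direct invocation of Proposition~\ref{propProperAction}; one circumvents it by working on the compact tubular neighborhood $K'$ of $K$, which is complete regardless of whether $N$ is.
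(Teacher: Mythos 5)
Your route is genuinely different from the paper's. The paper proves this lemma in one stroke: Theorem~\ref{thmFoliations} supplies a transverse \emph{Riemannian} structure on $\mathcal F$, which (using compactness of $M$) induces a \emph{complete} Riemannian metric on $N$ preserved by $P$; then $\overline P$ is a closed subgroup of the isometry group of a complete Riemannian manifold, hence a Lie group acting properly. You instead work with the incomplete similarity metric directly, via $\mathrm{Sim}(N)=\mathrm{Aff}(N,\nabla)$ (Lemma~\ref{lemmaAffSim}) and Kobayashi's theorem, together with a dichotomy on the ratios $\lambda_n$. Your second case ($\lambda_n\to 0$) is a correct and rather nice adaptation of Proposition~\ref{propProperAction} to the non-complete setting: the compactness of $K'=\{x\in N: d_g(x,K)\le\epsilon\}$ for small $\epsilon$ does follow from local compactness (the supremum of radii of compact closed balls is a positive $1$-Lipschitz function, hence bounded below on $K$), and Proposition~\ref{propHomothetyFlat} is indeed proved in the paper without any completeness hypothesis.

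The gap is in your first case. You justify that $b_n(y_0)$ ``remains in a compact set'' by noting that $b_n$ sends some point of $K$ into $K$ and $\lambda_n$ is bounded; but this only gives $d_g(b_n(y_0),K)\le \lambda_n\,\mathrm{diam}(K)\le C$, i.e.\ that $b_n(y_0)$ stays in a metrically \emph{bounded} set. In Case 3 of Theorem~\ref{thmConformalStructure} the manifold $N$ is necessarily non-complete, and in an incomplete manifold closed bounded sets need not be compact, so this does not produce the convergent subsequence of $1$-jets that the Kobayashi/Ascoli argument for $\mathrm{Aff}(N,\nabla)$ requires. Indeed, the principle you invoke --- ``bounded image and bounded derivative at one point force a convergent subsequence'' --- is false without completeness: on the universal cover of $\mathbb R^2\setminus\{0\}$ (the setting of Example~\ref{exFlat}), the deck transformations are isometries whose orbits are bounded (every orbit has diameter at most twice the distance to the puncture), yet they form a discrete, non-relatively-compact subgroup of the isometry group. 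The step is probably repairable --- replace the fixed $y_0$ by the points $x_n\in K$ with $b_n(x_n)\in K$, extract so that $x_n$ and $b_n(x_n)$ converge in $K$, and run the frame-bundle compactness argument with these varying base points --- but as written the justification fails, and it fails on exactly the point (non-completeness of $N$) that the paper's choice of proof is designed to circumvent.
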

\begin{proof}
With Theorem~\ref{thmFoliations}, there exists a transverse Riemannian structure on $\mathcal F$: it induces naturally a structure of complete Riemannian manifold on $N$, such that $P$ acts by isometries. Thus, $\overline P$ is a closed subgroup of the group of isometries of $N$ for this metric, so it is a Lie group which acts properly on $N$.
\end{proof}

\begin{lemme} \label{lemmaP0}
Every element of $\overline P$ is the product of an element of $P$ by an element of $\overline P^0$, \emph{i.e.} $\overline P = P \cdot \overline P^0$.
\end{lemme}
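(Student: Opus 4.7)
The statement is equivalent to saying that $P$ meets every connected component of $\overline P$. The plan is to exploit the tension between the density of $P$ in $\overline P$ and the discreteness of the quotient $\overline P / \overline P^0$.

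First, I would invoke Lemma~\ref{lemmaLieGroup} to know that $\overline P$ is a Lie group. As a consequence, the identity component $\overline P^0$ is an open (and closed) subgroup of $\overline P$, so that the quotient topological group $\overline P / \overline P^0$ is discrete.

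Next, let $\pi: \overline P \to \overline P / \overline P^0$ be the quotient projection, which is continuous and open. Since $P$ is by definition dense in $\overline P$, the image $\pi(P)$ is a dense subset of $\overline P / \overline P^0$. A dense subset of a discrete topological space is necessarily the whole space, so $\pi(P) = \overline P / \overline P^0$. Hence, for every $g \in \overline P$, there exists $p \in P$ with $\pi(p) = \pi(g)$, \emph{i.e.} $p^{-1} g \in \overline P^0$, so that $g = p \cdot (p^{-1} g) \in P \cdot \overline P^0$, which is exactly the desired conclusion.

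\textbf{Main obstacle.} Essentially none: the argument is a general topological fact about closures of subgroups in Lie groups, and the only nontrivial input is that $\overline P$ is a Lie group, which was already established in Lemma~\ref{lemmaLieGroup}. The only point to be careful about is that one really needs $\overline P^0$ to be \emph{open} in $\overline P$ (not merely closed), but this holds for any Lie group.
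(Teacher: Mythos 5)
Your proof is correct and rests on the same key fact as the paper's: the identity component $\overline P^0$ is open in the Lie group $\overline P$, so density of $P$ forces $P$ to meet every coset of $\overline P^0$. The paper phrases this with a sequence $p_n \to p$ and local connectedness (so $p_n^{-1}p \in \overline P^0$ eventually) rather than via the discrete quotient $\overline P / \overline P^0$, but the two arguments are essentially identical.
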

\begin{proof}
Since $P \subseteq \overline P$ and $\overline P^0 \subseteq \overline P$, we have $P \cdot \overline P^0 \subseteq \overline P$.

Conversely, consider some $p \in \overline P$, and $(p_n)$ a sequence of elements of $P$ converging to $p$ in $\overline P$. Then $(p_n)^{-1} p$ converges to the identity in $\overline P$. Since $\overline P$ is a Lie group, it is locally connected and therefore $(p_n)^{-1} p \in \overline P^0$ for a large enough $n$. Hence for this $n$, $p = p_n (p_n)^{-1} p \in P \cdot \overline P^0$.
\end{proof}

\begin{lemme} \label{lemmaClosure}
Denote by $f$ the covering $\mathbb R^q \times N \to M$, and let $(a, x) \in \mathbb R^q \times N$. Then the leaf of $\mathcal F$ containing $f(a, x)$ is $f(\mathbb R^q \times P x)$, and the closure of this leaf (in $M$) is $f(\mathbb R^q \times \overline P^0 x)$.
\end{lemme}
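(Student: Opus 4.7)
The plan is to treat the two claims separately: first identify the leaf of $\mathcal F$ through $f(a,x)$ by tracking leaves of $\tilde{\mathcal F}$ through preimages, and then compute its closure by pushing a closure computation in $\tilde M$ through the covering $f$. First I would note that the submersion $\tilde M = \mathbb R^q \times N \to N$ has fibers $\mathbb R^q \times \{y\}$, so these are exactly the leaves of $\tilde{\mathcal F}$. The leaf of $\mathcal F$ through $f(a,x)$ is then the union of $f$-images of all leaves of $\tilde{\mathcal F}$ passing through some preimage of $f(a,x)$. Since $\pi_1(M) \subseteq \mathrm{Sim}(\mathbb R^q) \times \mathrm{Sim}(N)$, the preimages of $f(a,x)$ form the set $\setof{(\alpha a, gx)}{(\alpha, g) \in \pi_1(M)}$, whose second coordinates exhaust precisely $Px$ by the definition of $P$. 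The corresponding leaves union to $\mathbb R^q \times Px$, and the first claim follows.

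For the second claim, the key observation is that $A = \mathbb R^q \times Px$ is $\pi_1(M)$-invariant (because $Px$ is a $P$-orbit). I would then establish the following general fact: if $A \subseteq \tilde M$ is $\pi_1(M)$-invariant, then $\overline{f(A)} = f(\overline A)$. The inclusion $\supseteq$ holds because $f^{-1}(\overline{f(A)})$ is closed and contains $A$, so it contains $\overline A$. For the reverse inclusion, given $y \in \overline{f(A)}$ and $y_n \in f(A)$ with $y_n \to y$, I would lift $y$ to any $\tilde y \in f^{-1}(y)$ and use a local inverse of the covering map $f$ near $\tilde y$ to produce lifts $\tilde y_n \to \tilde y$; since $\pi_1(M)$-invariance gives $f^{-1}(f(A)) = A$, these lifts lie in $A$, so $\tilde y \in \overline A$.

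It then remains to compute $\overline{A}$ in $\tilde M$ and bring it back down. Since closure commutes with products, $\overline A = \mathbb R^q \times \overline{Px}$, and I would argue $\overline{Px} = \overline P \cdot x$: by Lemma~\ref{lemmaLieGroup}, $\overline P$ acts properly on $N$, so orbits are closed, which combined with continuity of the orbit map $\overline P \to N$ yields equality. Using $\overline P = P \cdot \overline{P}^0$ from Lemma~\ref{lemmaP0}, this gives $\overline A = \mathbb R^q \times P \overline{P}^0 x$. The last step is to show that $f(\mathbb R^q \times P \overline{P}^0 x) = f(\mathbb R^q \times \overline{P}^0 x)$: for each $p \in P$, pick any $(a_p, p) \in \pi_1(M)$; since $a_p \cdot \mathbb R^q = \mathbb R^q$, one has $(a_p, p) \cdot (\mathbb R^q \times \overline{P}^0 x) = \mathbb R^q \times p \overline{P}^0 x$, so all the pieces $\mathbb R^q \times p\overline{P}^0 x$ lie in the same $\pi_1(M)$-orbit and project to the same subset of $M$ under $f$.

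I expect the main technical point to be the closure-commutation lemma of the second paragraph; the rest is essentially bookkeeping with the product structure and the two structural lemmas (Lemmas~\ref{lemmaLieGroup} and~\ref{lemmaP0}) already established.
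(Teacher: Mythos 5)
Your proposal is correct and follows essentially the same route as the paper: identify the leaf as $f(\mathbb R^q \times Px)$ using the deck group, show that taking closures commutes with $f$ by lifting convergent sequences through the covering and invoking properness of the $\overline P$-action (Lemma~\ref{lemmaLieGroup}), and finally absorb $P$ into $\overline P^0$ via Lemma~\ref{lemmaP0}. The only difference is organizational: you package the lifting step as a general statement $\overline{f(A)} = f(\overline A)$ for $\pi_1(M)$-invariant $A$ and use properness to see that $\overline P x$ is closed, whereas the paper uses properness to extract a convergent subsequence of $(p_n)$ directly; both hinge on the same facts.
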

\begin{proof}
The leaf $\tilde{\mathcal F}$ (the foliation on the universal cover $\tilde M$) containing $(a,x)$ is $\mathbb R^q \times \{x\}$. Thus its projection by $f$ is $f(\mathbb R^q \times \{x\})$, which is equal to $f(\mathbb R^q \times P x)$ (because for any $\gamma \in \pi_1(M)$ and $y \in \tilde M$, $f(\gamma y) = f(y)$). Thus, the leaf of $\mathcal F$ containing $f(a, x)$ is $f(\mathbb R^q \times P x)$.

The closure of this leaf is $\overline{f(\mathbb R^q \times P x)}$. Let us show that $\overline{f(\mathbb R^q \times P x)} = f(\mathbb R^q \times \overline P x)$. If $f(a, p x) \in f(\mathbb R^q \times \overline P x)$, then there exists a sequence $(p_n)$ of elements of $P$ such that $p_n \to p$, and therefore $f(a, p_n x)$ is a sequence of elements of $f(\mathbb R^q \times P x)$ converging to $f(a, p x)$. Conversely, for $y \in \overline{f(\mathbb R^q \times P x)}$, there exists $y_n \in f(\mathbb R^q \times P x)$ such that $y_n \to y$. One may find $(a_n)$ a sequence in $\mathbb R^q$ and $p_n$ a sequence in $P$ such that $f(a_n, p_n x) = y_n$, and $(a_n, p_n x)$ converges in $\mathbb R^q \times N$ to some point $(a, b)$. Furthermore, since $\overline P$ acts properly on $N$ (Lemma~\ref{lemmaLieGroup}), one may assume (up to extraction) that $(p_n)$ converges to some $p \in \overline P$. Hence, $y = f(a, px)$.

Finally, $f(\mathbb R^q \times \overline P^0 x) = f(\mathbb R^q \times P \cdot \overline P^0 x) = f(\mathbb R^q \times \overline P x)$ by Lemma~\ref{lemmaP0}.
\end{proof}

Now let us notice the following:
\begin{lemme} \label{lemmaLattice}
The group $\pi_1(M)$ is a lattice in the Lie group $\mathrm{Sim}(\tilde M) \cap (\mathrm{Sim}(\mathbb R^q) \times \overline P)$.
\end{lemme}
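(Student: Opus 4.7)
The plan is to verify the three defining properties in turn: that $G := \mathrm{Sim}(\tilde M) \cap (\mathrm{Sim}(\mathbb R^q) \times \overline P)$ is a Lie group, that $\pi_1(M)$ is discrete in $G$, and that $\pi_1(M)$ is cocompact in $G$. The first two points are quick. Since $\tilde M = \mathbb R^q \times N$ is a Riemannian product and the two factors are of different de Rham types, every similarity of $\tilde M$ respects the decomposition and is therefore a pair $(g_1, g_2) \in \mathrm{Sim}(\mathbb R^q) \times \mathrm{Sim}(N)$ whose two ratios coincide; hence $G$ is cut out from $\mathrm{Sim}(\mathbb R^q) \times \overline P$ by the closed condition of equal ratios, and since $\overline P$ is a Lie group by Lemma~\ref{lemmaLieGroup}, so is $G$. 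Discreteness of $\pi_1(M) \subseteq G$ then follows from the freeness and proper discontinuity of the $\pi_1(M)$-action on $\tilde M$: any sequence $\gamma_n \to e$ in $G \cap \pi_1(M)$ would send a fixed $\tilde x_0$ to $\gamma_n \tilde x_0 \to \tilde x_0$, and proper discontinuity together with freeness forces $\gamma_n = e$ eventually.

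For cocompactness, I would first show that $G$ itself acts properly on $\tilde M$; this is the main obstacle. Consider sequences $\tilde x_n = (a_n, x_n) \to (a, x)$ in $\tilde M$ and $g_n = (g_n^1, g_n^2) \in G$ with $g_n \tilde x_n \to (a', x')$. From $g_n^2 x_n \to x'$ and the fact that $\overline P$ acts properly on $N$ (Lemma~\ref{lemmaLieGroup}), a subsequence of $g_n^2$ converges in $\overline P$; in particular the common ratio $\lambda_n$ of $g_n^1$ and $g_n^2$ converges to some $\lambda > 0$. Writing $g_n^1(v) = \lambda_n O_n v + b_n$ with $O_n \in O(q)$ and $b_n \in \mathbb R^q$, and using compactness of $O(q)$, a further subsequence yields $O_n \to O$; the relation $g_n^1 a_n \to a'$ then forces convergence of $b_n$, so $g_n$ has a convergent subsequence in $G$.

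Once properness is in hand, cocompactness of $\pi_1(M)$ in $G$ follows by a standard averaging argument: fix $\tilde x_0 \in \tilde M$, and using compactness of $M$ choose a compact $C \subseteq \tilde M$ with $\pi_1(M) \cdot C = \tilde M$. The set $\hat C := \{g \in G : g \cdot \tilde x_0 \in C\}$ is compact by properness of the $G$-action, and for every $g \in G$ one can find $\gamma \in \pi_1(M)$ with $\gamma^{-1} g \cdot \tilde x_0 \in C$, so $g \in \gamma \hat C \subseteq \pi_1(M) \cdot \hat C$. Consequently $\pi_1(M) \backslash G$ is the image of the compact set $\hat C$ under the quotient map, hence compact, which completes the proof that $\pi_1(M)$ is a lattice in $G$.
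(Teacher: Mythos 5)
Your proposal is correct, and its skeleton is the one the paper uses: realize $G=\mathrm{Sim}(\tilde M)\cap(\mathrm{Sim}(\mathbb R^q)\times\overline P)$ as a closed subgroup of the Lie group $\mathrm{Sim}(\mathbb R^q)\times\overline P$, note discreteness is immediate, and deduce cocompactness from a compact set $K_1\subseteq\tilde M$ with $\pi_1(M)\cdot K_1=\tilde M$ together with properness of the ambient group action (your $\hat C$ is the paper's $K_2=\setof{\phi}{\phi(K_1)\cap K_1\neq\emptyset}$ in a slightly different guise). The one place where you genuinely diverge is the properness step, which is the crux. The paper gets it by citing Proposition~\ref{propProperAction} for $\mathrm{Sim}(\tilde M)$; that proposition is stated for \emph{complete} manifolds, whereas $\tilde M=\mathbb R^q\times N$ is not complete here, so the citation requires some interpretation (e.g.\ arguing by contradiction with the flatness conclusion, or passing to an auxiliary complete metric). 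You instead prove properness of the smaller group $G$ directly: properness of $\overline P$ on $N$ from Lemma~\ref{lemmaLieGroup} handles the $N$-factor and pins down the common ratio, and compactness of $O(q)$ plus convergence of the translation parts handles the Euclidean factor. This is more hands-on but self-contained, and it sidesteps the completeness issue entirely; it buys nothing in generality but is arguably the cleaner justification of the step that the paper passes over quickly.
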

\begin{proof}
We will write $S = \mathrm{Sim}(\tilde M) \cap (\mathrm{Sim}(\mathbb R^q) \times \overline P)$. Notice that $S$ is a closed subgroup of the Lie group $\mathrm{Sim}(\mathbb R^q) \times \overline P$, so it is a Lie group.

The group $\pi_1(M)$ is discrete, so there remains to show that it is cocompact.

Since $M$ is compact, there is a compact set $K_1 \subseteq \tilde M$ such that $\pi_1(M) \cdot K_1 = \tilde M$. Define \[ K_2 = \setof{\phi \in S}{\phi(K_1) \cap K_1 \neq \emptyset}. \] The set $K_2$ is compact because the action of $\mathrm{Sim}(\tilde M)$ is proper (by Proposition~\ref{propProperAction}). Then for all $\psi \in S$ there exists $\gamma \in \pi_1(M)$ such that $\gamma (\psi (K_1)) \cap K_1 \neq \emptyset$: hence $\gamma \circ \psi \in K_2$. This proves that $\pi_1(M) \cdot K_2 =  S$, and therefore $\pi_1(M)$ is cocompact in $S$.
\end{proof}

We denote by $\mathrm{Isom}^+(\mathbb R^q)$ (\emph{resp.} $\mathrm{Sim}^+(\mathbb R^q)$) the group of orientation-preserving isometries (\emph{resp.} similarities) of $\mathbb R^q$.

Writing $P_I = \overline P^0 \cap \mathrm{Isom}(N)$, we have an exact sequence:
\[ 0 \to P_I \to \overline P^0 \xrightarrow{r} \mathbb R_{>0} \]
where $r : \overline P^0 \to \mathbb R_{>0}$ gives the ratio of a similarity. Since $\overline P^0$ is connected, the mapping $r$ is either surjective or constantly equal to $1$. Furthermore, if $r$ is surjective, it has a section because $\overline P^0$ is a Lie group. Thus, writing $H \subseteq \mathbb R_{>0}$ the image of $r$, we may write, up to isomorphism, $\overline P^0 = H \ltimes P_I$ (in particular, $P_I$ is connected). In addition, $\mathrm{Sim}^+(\mathbb R^q) = \mathbb R_{>0} \ltimes \mathrm{Isom}^+(\mathbb R^q)$. Considering the group $T = \mathrm{Sim}(\tilde M) \cap (\mathrm{Sim}^+(\mathbb R^q) \times \overline P^0)$, we may write:
\[ T = H \ltimes ( \mathrm{Isom}^+(\mathbb R^q) \times P_I). \]

Denoting by $\tilde P_I$ the universal cover of $P_I$, and by $\tilde T$ the universal cover of $T$, we obtain:
\[ \tilde T = H \ltimes ((\tilde{SO}(q) \ltimes \mathbb R^q) \times \tilde P_I), \]
where $\tilde{SO}(q)$ is the universal cover of $SO(q)$.

The group $\pi_1(M)$ acts as a subgroup of $\mathrm{Sim}(\tilde M) \cap (\mathrm{Sim}(\mathbb R^q) \times \overline P)$. This subgroup is a lattice (see Lemma~\ref{lemmaLattice}), so by Proposition~\ref{propNeutral}, $\pi_1(M) \cap T$ is also a lattice in $T$.

Consider $\Gamma$ the subgroup of $\tilde T$ defined as the pullback of $\pi_1(M) \cap T$ by the covering $\tilde T \to T$: it is a lattice in $\tilde T$. The image of $\pi_1(M) \cap T$ by the projection onto the second factor $\mathrm{Sim}^+(\mathbb R^q) \times \overline P^0 \to \overline P^0$ is $P \cap \overline P^0$ (by definition of $P$). Since $P \cap \overline P^0$ is dense in $\overline P^0$, this implies that the image of $\Gamma$ by the projection onto $H \ltimes \tilde P_I$ is dense in $H \ltimes \tilde P_I$.

Let $K$ be the maximal normal compact connected subgroup of $\tilde P_I$ and write $\tilde P_I = K \times L$ (using Proposition~\ref{propFactor}). Then $H \ltimes K$ is isomorphic to the direct product $H \times K$, so \[ \tilde T = K \times \left(H \ltimes ((\tilde{SO}(q) \ltimes \mathbb R^q) \times L)\right). \]

\begin{lemme} \label{lemmaNilpotent}
The group $L$ is nilpotent and the group $H$ is trivial. In particular, $\overline P^0$ acts on $N$ by isometries.
\end{lemme}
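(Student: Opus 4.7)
The plan is to apply Raghunathan's theorem (Proposition~\ref{propRaghu}) to the simply connected Lie group
\[
G := H \ltimes \bigl((\tilde{SO}(q) \ltimes \mathbb R^q) \times L\bigr),
\]
so that $\tilde T = K \times G$. I would first check that the projection $\Gamma_G$ of $\Gamma$ onto $G$ remains a lattice: cocompactness is clear because $K$ is compact (so $G/\Gamma_G$ is a quotient of the compact $\tilde T/\Gamma$), and discreteness follows from the standard approximation argument---any accumulation point of $\Gamma_G$ lifts, by compactness of $K$, to an accumulation point of $\Gamma$, contradicting its discreteness.

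Next I would identify the Levi decomposition of $G$. Writing $L = S_L \ltimes \mathrm{Rad}(L)$ for a Levi decomposition of $L$, one checks that the radical of $G$ is $R = H \ltimes (\mathbb R^q \times \mathrm{Rad}(L))$, a semisimple Levi is $S = \tilde{SO}(q) \times S_L$, and the nilradical computes to $\mathrm{Nil}(G) = \mathbb R^q \times \mathrm{Nil}(L)$. The hypothesis of Proposition~\ref{propRaghu}---that the kernel of the action of $S$ on $R$ contains no compact factor---is verified as follows. Since $q \geq 1$, the compact factor $\tilde{SO}(q)$ acts faithfully on $\mathbb R^q \subseteq R$, so it is not in the kernel. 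Any other compact factor in the kernel would have to come from $S_L$, but a compact direct factor of $S_L$ would be a compact normal connected subgroup of $L$, hence of $\tilde P_I$, strictly larger than $K$, contradicting the maximality of $K$. Raghunathan's theorem then yields that $\Gamma_G \cap \mathrm{Nil}(G)$ is a lattice in $\mathrm{Nil}(G)$.

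By Proposition~\ref{propEquivLattice}, the image of $\Gamma_G$ in the quotient $G/\mathrm{Nil}(G) = H \ltimes \bigl(\tilde{SO}(q) \times L'\bigr)$, where $L' := L/\mathrm{Nil}(L)$, is a lattice. Projecting further through the compact normal subgroup $\tilde{SO}(q)$ yields a \emph{discrete} subgroup of $H \ltimes L'$ (the projection of a lattice by a quotient with compact kernel remains discrete, by the same approximation argument as above). On the other hand, the image of $\Gamma$ in $H \ltimes \tilde P_I$ is dense by the observation preceding the lemma, so its further image in $H \ltimes L'$ is also dense. A subgroup that is simultaneously dense and discrete in a connected Lie group forces that group to be trivial; since $H \ltimes L'$ is connected, we conclude $H = \{e\}$ and $L = \mathrm{Nil}(L)$ is nilpotent. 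The final statement $\overline P^0 = P_I \subseteq \mathrm{Isom}(N)$ follows immediately.

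The main obstacle is correctly identifying the Levi decomposition and the nilradical of $G$, and verifying Raghunathan's hypothesis; one must be careful about how $H$ acts on the Levi decomposition of $\tilde P_I$ and use that $\mathrm{Rad}(L)$ and $\mathrm{Nil}(L)$, being characteristic in $L$, are preserved by $H$-conjugation and therefore are normal in all of $G$.
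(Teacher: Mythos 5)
Your proposal follows the paper's proof essentially step for step: project away the compact factor $K$ to get a lattice $\Gamma_1$ in $H \ltimes ((\tilde{SO}(q) \ltimes \mathbb R^q) \times L)$, identify the nilradical as $\mathbb R^q \times \mathrm{Nil}(L)$ (the key point being that $H$ acts by homotheties, hence not unipotently, on $\mathbb R^q$), apply Proposition~\ref{propRaghu} together with Proposition~\ref{propEquivLattice} to obtain a lattice in the quotient by the nilradical, kill the rotation factor, and conclude from the fact that the image in $H \ltimes (L/\mathrm{Nil}(L))$ is simultaneously discrete and dense. Your explicit verification of the ``no compact factor in the kernel'' hypothesis of Proposition~\ref{propRaghu} is a welcome addition that the paper leaves implicit (though note that $\tilde{SO}(q) = \mathrm{Spin}(q)$ does \emph{not} act faithfully on $\mathbb R^q$ for $q \geq 3$ --- the kernel is $\{\pm 1\}$ --- and for $q \leq 2$ the group $\tilde{SO}(q) \ltimes \mathbb R^q$ is solvable, so $\tilde{SO}(q)$ contributes to the radical rather than to $S$; what saves the hypothesis is that the kernel contains no \emph{connected} compact factor, and your maximality-of-$K$ argument for $S_L$ is correct).

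There is, however, one genuine gap: you quotient by $\tilde{SO}(q)$ ``as a compact normal subgroup,'' but $\tilde{SO}(2) \cong \mathbb R$ is not compact, and $q = 2$ is a case that actually occurs (see Remark~\ref{rkMMP}). Your approximation argument for discreteness of the projected group breaks down there, since the $\tilde{SO}(2)$-components of a sequence of lattice elements can escape to infinity. The paper repairs this in two stages: since $\Gamma$ is defined as the \emph{full preimage} of $\pi_1(M) \cap T$ under the covering $\tilde T \to T$, it contains $\pi_1(SO(q))$, which is a lattice in the (discrete, or in the case $q=2$ one-dimensional) kernel of $\tilde{SO}(q) \to SO(q)$; Proposition~\ref{propEquivLattice} then shows the image of $\Gamma_1$ in $H \ltimes (SO(q) \times L/\mathrm{Nil}(L))$ is still a lattice, and only then does one quotient by the genuinely compact $SO(q)$. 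With that intermediate step inserted, your argument closes.
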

\begin{proof}
Since $K$ is compact, the image $\Gamma_1$ of $\Gamma$ by the projection onto $H \ltimes ((\tilde{SO}(q) \ltimes \mathbb R^q) \times L)$ is a lattice.

The nilradical of $\mathrm{Isom}^+(\mathbb R^q) = \tilde{SO}(q) \ltimes \mathbb R^q$ is $\mathbb R^q$.

Thus, the nilradical of the Lie group $H \ltimes ((\tilde{SO}(q) \ltimes \mathbb R^q) \times L)$ contains $\mathbb R^q$. Since $H$ acts by homotheties, the action of a nontrivial element of $H$ on $\mathbb R^q$ is not unipotent, and therefore the image of the nilradical by the projection onto $H$ is trivial. Hence, the nilradical of the Lie group $H \ltimes ((\tilde{SO}(q) \ltimes \mathbb R^q) \times L)$ is $\mathbb R^q \times \mathrm{Nil}(L)$, where $\mathrm{Nil}(L)$ is the nilradical of $L$.

Moreover, $H \ltimes ((\tilde{SO}(q) \ltimes \mathbb R^q) \times L)$ does not contain any nontrivial normal compact subgroup, because $K$ is defined as the maximal normal compact connected subgroup of $\tilde P_I$. Hence one may apply Proposition~\ref{propRaghu} and Proposition~\ref{propEquivLattice}, which show that the image of $\Gamma_1$ by the projection onto $H \ltimes (\tilde{SO}(q) \times L / \mathrm{Nil}(L))$ is a lattice. The assumptions of Proposition~\ref{propRaghu} are satisfied because $K$ is the maximal normal compact connected subgroup of $\tilde P_I$. Furthermore, $\Gamma_1$ contains the fundamental group of $SO(q)$, so the image of $\Gamma_1$ by the projection onto $H \ltimes (SO(q) \times L/\mathrm{Nil}(L))$ is a lattice.

Since $SO(q)$ is compact, the image of $\Gamma_1$ by the projection onto $H \ltimes (L / \mathrm{Nil}(L))$ is a lattice. But this image is dense. Therefore, $H$ and $L/\mathrm{Nil}(L)$ are discrete. Since they are connected, $H = \{1\}$ and $L = \mathrm{Nil}(L)$.
\end{proof}

\begin{lemme} \label{lemmaLAbelian}
The group $L$ is abelian.
\end{lemme}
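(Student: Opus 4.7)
The plan is to imitate the argument of Lemma~\ref{lemmaNilpotent}: use the sandwich ``dense subgroup of the target, but also a lattice'' to force a connected quotient to be trivial. Here the relevant quotient will be $L/Z(L)$, and we will reach a contradiction with its possible nontriviality.

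First I would start from the structure obtained after Lemma~\ref{lemmaNilpotent}: since $H = \{1\}$ we have $\tilde T = K \times \bigl((\tilde{SO}(q) \ltimes \mathbb R^q) \times L\bigr)$, with $\Gamma$ a lattice in $\tilde T$ and the image of $\Gamma$ in $\tilde P_I = K \times L$ dense. Modding out the compact factor $K$ as in Lemma~\ref{lemmaNilpotent} gives that the image $\Gamma_1$ of $\Gamma$ in $(\tilde{SO}(q) \ltimes \mathbb R^q) \times L$ is a lattice, and the image of $\Gamma_1$ in $L$ is still dense. The nilradical of $(\tilde{SO}(q) \ltimes \mathbb R^q) \times L$ is $\mathbb R^q \times L$ (since $L$ is nilpotent by Lemma~\ref{lemmaNilpotent}, and the nilradical of $\tilde{SO}(q) \ltimes \mathbb R^q$ is $\mathbb R^q$), and the hypothesis of Proposition~\ref{propRaghu} is satisfied exactly as in Lemma~\ref{lemmaNilpotent}. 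Applying that proposition, $\Gamma_1 \cap (\mathbb R^q \times L)$ is a lattice in $\mathbb R^q \times L$.

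Next I would use the assumption that $L$ is nilpotent to bring in the center. The group $\mathbb R^q \times L$ is nilpotent with center $\mathbb R^q \times Z(L)$, so by Proposition~\ref{propNilpotentCenter} the subgroup $\Gamma_1 \cap (\mathbb R^q \times Z(L))$ is a lattice in $\mathbb R^q \times Z(L)$. Proposition~\ref{propEquivLattice} then tells us that the image of $\Gamma_1$ in the quotient
\[
\bigl((\tilde{SO}(q) \ltimes \mathbb R^q) \times L\bigr) / (\mathbb R^q \times Z(L)) \;\cong\; \tilde{SO}(q) \times L/Z(L)
\]
is a lattice. Exactly as in Lemma~\ref{lemmaNilpotent}, the pullback construction of $\Gamma$ guarantees that $\Gamma_1$ contains $\ker(\tilde{SO}(q) \to SO(q))$, so (using Proposition~\ref{propEquivLattice} again, combined with the fact that $SO(q)$ is compact) the image of $\Gamma_1$ in $L/Z(L)$ is a lattice.

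Finally, the image of $\Gamma_1$ in $L/Z(L)$ is the continuous image of a dense subset of $L$ under the surjection $L \to L/Z(L)$, hence it is dense. A dense discrete subgroup of a connected Lie group forces the group itself to be discrete, so the connected group $L/Z(L)$ must be trivial; equivalently, $L = Z(L)$, which proves the lemma. I do not foresee a genuine obstacle here, since every step is a direct analogue of the corresponding step in Lemma~\ref{lemmaNilpotent}; the only care needed is to verify that Proposition~\ref{propRaghu} still applies (the semisimple Levi $\tilde{SO}(q)$ of the ambient group acts on $\mathbb R^q$ with discrete kernel and trivially on $L$, so the kernel of its action on the radical $\mathbb R^q \times L$ is discrete and contains no compact factor), and to keep track of the fact that compact factors do not destroy the lattice property when projecting.
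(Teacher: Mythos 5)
Your argument is correct and follows essentially the same route as the paper: apply Proposition~\ref{propRaghu} to get a lattice in the nilradical $\mathbb R^q \times L$, then Proposition~\ref{propNilpotentCenter} to pass to the center, and conclude from the density of the projection to $L$ that the connected group $L/Z(L)$ is trivial. The only (cosmetic) difference is that the paper first replaces $\Gamma_1$ by the finite-index subgroup $\Gamma_2 = \Gamma_1 \cap (\mathbb R^q \times L)$ and tracks density of its image in $L$, whereas you keep $\Gamma_1$ throughout and quotient out the compact $\tilde{SO}(q)$-factor at the end via Proposition~\ref{propEquivLattice}.
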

\begin{proof}
Consider $\Gamma_2$ the intersection of $\Gamma_1$ with $\mathbb R^q \times L$. We use again the fact that the image of $\Gamma_1$ by the projection onto $H \ltimes (SO(q) \times L / \mathrm{Nil}(L))$ is a lattice: it now means that the image of $\Gamma_1$ by the projection onto $SO(q)$ is a lattice, and therefore finite. Since the image of $\Gamma_1$ by the projection onto the second factor $L$ is dense, and $\Gamma_2$ has finite index in $\Gamma_1$, this implies that the image of $\Gamma_2$ by the projection onto $L$ is dense. In addition, still with Proposition~\ref{propRaghu}, $\Gamma_2$ is a lattice in $\mathbb R^q \times L$.

By Proposition~\ref{propNilpotentCenter}, the image of $\Gamma_2$ by the projection onto $L / Z(L)$ is a lattice in $L / Z(L)$, where $Z(L)$ is the center of $L$. This image is also dense, so $L/Z(L)$ is discrete. Since $L$ is connected, $L = Z(L)$ and $L$ is abelian.
\end{proof}

\begin{lemme} \label{lemmaKAbelian}
The group $K$ is abelian.
\end{lemme}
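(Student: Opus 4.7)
\medskip\noindent\textbf{Proof plan.} The aim is to show that the semisimple part $[K,K]$ of the compact connected group $K$ is trivial. I would decompose $K$ as an almost direct product $K = Z(K)^0 \cdot [K,K]$, pass to its universal cover $\tilde K = Z' \times \tilde S$ (with $Z'$ a vector space covering the connected center and $\tilde S$ a compact simply connected semisimple Lie group), lift $\tilde T$ to the covering $\hat T = (\tilde{SO}(q) \ltimes \mathbb R^q) \times Z' \times \tilde S \times L$, and pull back $\Gamma$ to a lattice $\hat \Gamma \subseteq \hat T$ (the pullback of a lattice under a covering with discrete fibers is again a lattice). Since the image of $\Gamma$ in $\tilde P_I = K \times L$ is dense (as in the proof of Lemma~\ref{lemmaLAbelian}) and covering maps pull back dense subsets to dense subsets, the image of $\hat \Gamma$ in $\tilde K \times L$ is dense as well, and in particular projects to a dense subgroup of $\tilde S$.

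I would then quotient by the compact normal factor $\tilde S$. Since $\hat \Gamma \cap \tilde S$ is a discrete subgroup of the compact group $\tilde S$ it is finite, hence a lattice in $\tilde S$, so by Proposition~\ref{propEquivLattice} the image $\Gamma'$ of $\hat \Gamma$ in $G' := (\tilde{SO}(q) \ltimes \mathbb R^q) \times Z' \times L$ is a lattice in $G'$. The group $G'$ is the semidirect product $\tilde{SO}(q) \ltimes (\mathbb R^q \times Z' \times L)$, whose radical (and nilradical) is the abelian group $\mathbb R^q \times Z' \times L$; the kernel of the action of the semisimple Levi $\tilde{SO}(q)$ on this nilradical is the discrete group $\pi_1(SO(q))$, which contains no compact factor. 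Proposition~\ref{propRaghu} then shows that $\Gamma' \cap (\mathbb R^q \times Z' \times L)$ is a lattice in this abelian group, and Proposition~\ref{propEquivLattice} shows that the projection of $\Gamma'$ to $\tilde{SO}(q)$ is a lattice (finite when $q \neq 2$ and isomorphic to $\mathbb Z$ when $q = 2$). Hence $\Gamma'$ is polycyclic, so in particular virtually solvable, and therefore so is the finite extension $\hat \Gamma$.

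To conclude, pick a solvable subgroup $\hat \Gamma_0 \subseteq \hat \Gamma$ of finite index; its image $H_0$ in $\tilde S$ is solvable, and so is its closure $\overline{H_0}$, since in any topological group the closure of a solvable subgroup is solvable (the inclusion $[\overline A, \overline A] \subseteq \overline{[A,A]}$ makes the derived series of the closure eventually trivial). Writing $\hat \Gamma$ as a finite union of $\hat \Gamma_0$-cosets and passing to closures expresses $\tilde S$ (the closure of the dense image of $\hat \Gamma$) as a finite union of translates of the closed subgroup $\overline{H_0}$; by connectedness of $\tilde S$ this forces $\overline{H_0} = \tilde S$. Consequently $\tilde S$ itself is solvable, hence, as a compact connected Lie group, a torus; being also semisimple, $\tilde S$ must be trivial. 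Thus $[K,K] = \{1\}$ and $K$ is abelian.

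The main obstacle will be the careful handling of the low-dimensional case $q = 2$, where $\tilde{SO}(2) \simeq \mathbb R$ is not compact and contributes a non-trivial $\mathbb Z$-factor in the extension describing $\Gamma'$; this prevents $\Gamma'$ from being virtually abelian, but virtual solvability (indeed polycyclicity) is preserved and still suffices for the density-versus-solvability argument on $\tilde S$. A secondary point to verify carefully is that density of the $\Gamma$-image in $K \times L$ really lifts to density of the $\hat \Gamma$-image in $\tilde K \times L$, which relies on the openness of covering maps.
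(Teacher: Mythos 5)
Your argument is correct, but it takes a genuinely different and considerably heavier route than the paper. The paper's proof is a short commutator trick: setting $\Gamma_3 = \Gamma \cap (K \times \mathbb R^q \times L)$ and letting $\Gamma_4$ be its projection to $K$, any commutator of two elements of $\Gamma_3$ has trivial component in the abelian group $\mathbb R^q \times L$ (this is where Lemma~\ref{lemmaLAbelian} enters), so $[\Gamma_4, \Gamma_4] \subseteq \Gamma \cap K$, which is discrete; since the image of $\Gamma$ in $\tilde{SO}(q)$ is finite, $\Gamma_3$ has finite index in $\Gamma$, hence $\Gamma_4$ is dense in $K$, and by continuity every commutator of $K$ lies in the discrete set $\Gamma \cap K$, forcing the connected group $[K,K]$ to be trivial. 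You instead split $K$ into its central torus and semisimple part, pull $\Gamma$ back through a covering, quotient by the compact semisimple factor $\tilde S$, and rerun the lattice machinery (Propositions~\ref{propEquivLattice} and~\ref{propRaghu}) to conclude that the pulled-back lattice is virtually solvable, then kill $\tilde S$ by the density-plus-Baire argument. All your steps check out, including the two points you flag: density does lift through the covering because covering maps are open surjections, and the $q=2$ case only contributes a $\mathbb Z$-factor, which preserves virtual solvability. Two cosmetic remarks: $\Gamma'$ is in general only virtually polycyclic rather than polycyclic, which is all you use; and $K$, being a direct factor of the simply connected group $\tilde P_I$, is itself simply connected, so your covering $\tilde K \to K$ is actually trivial and $Z'$ vanishes (indeed $K$ is already semisimple, which is why the paper can afford to conclude at the end that $K$ is trivial). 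What your approach buys is generality --- it shows that any compact connected direct factor receiving a dense image of a virtually solvable lattice must be a torus --- at the price of reproving, via Raghunathan's theorem, solvability information that the paper extracts in one line from the already-established abelianness of $\mathbb R^q \times L$.
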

\begin{proof}
Consider the group $\Gamma_3 = \Gamma \cap (K \times \mathbb R^q \times L)$, and $\Gamma_4$ the image of $\Gamma_3$ by the projection onto $K$. Then for all $k, k' \in K$ and $\gamma, \gamma' \in \Gamma_2$, we may write \[ [(k, \gamma), (k', \gamma')] = ([k, k'], [\gamma, \gamma']) = ([k, k'], 1). \] Thus $[\Gamma_4, \Gamma_4] \subseteq \Gamma_3 \cap K = \Gamma \cap K$.

Since we already know that the image of $\Gamma$ by the projection onto $\tilde{SO}(q)$ is finite, we deduce that $\Gamma_3$ is a subgroup of $\Gamma$ of finite index, and therefore $\Gamma_4$ is dense in $K$, which implies that $[K, K] \subseteq \Gamma \cap K$. But $K$ is connected so $[K, K]$ is trivial: $K$ is abelian (in fact, $K$ is a compact abelian simply connected Lie group, so it is trivial).
\end{proof}

Lemmas~\ref{lemmaNilpotent}, \ref{lemmaLAbelian} and~\ref{lemmaKAbelian} together imply that $\tilde P_I$ is abelian, and therefore Lemma~\ref{lemmaP0Abelian} is proved.

\subsection{End of the proof of Theorem~\ref{thmClosures}} \label{endProofTori}

Denote by $\overline F$ the closure of a leaf in $M$. By Lemma~\ref{lemmaClosure}, $\overline F = f(\mathbb R^q \times \overline P^0 x)$. We already know that $\overline F$ is a submanifold of $M$ (see~\cite{MR932463}). Lemma~\ref{lemmaP0Abelian} implies that $\mathbb R^q \times \overline P^0 x$ is isometric to the product of a Euclidean space by a flat torus, so $\overline F$ is flat. Moreover,

\begin{prop} \label{propRiemannianManifolds}
The closures of the leaves are Riemannian manifolds, \emph{i.e.} the Riemannian metric on the universal cover $\tilde M$ induces a Riemannian metric on $\overline F$.
\end{prop}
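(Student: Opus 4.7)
My plan is to realize $\overline F$ as the quotient of the closed submanifold $\mathbb R^q \times \overline P^0 x \subset \tilde M$ by a free, properly discontinuous, \emph{isometric} deck action. The restriction of $g$ to $\mathbb R^q \times \overline P^0 x$ will then descend to the desired Riemannian metric on $\overline F$. The only nontrivial step is showing that the relevant deck transformations act by isometries and not merely similarities.

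First, since $\overline P^0$ acts on $N$ by isometries (Lemma~\ref{lemmaNilpotent}), the orbit $\overline P^0 x$ is a closed embedded submanifold of $N$, and the restriction $g_F$ of $g$ to $\mathbb R^q \times \overline P^0 x$ is a flat Riemannian metric: a Euclidean factor times the homogeneous metric on the orbit. Using that $\overline P^0$ is an open, connected, normal subgroup of $\overline P$ acting properly, I would argue that $\mathbb R^q \times \overline P^0 x$ is a connected component of $f^{-1}(\overline F) = \mathbb R^q \times \overline P x$ (Lemma~\ref{lemmaClosure}): each $\overline P^0$-orbit in $\overline P x$ is both open (because $\overline P^0$ is open in $\overline P$) and closed (proper action), hence a connected component. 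Thus $f$ restricts to a smooth covering onto $\overline F$, with deck group
\[
\Gamma_F = \{(a,b) \in \pi_1(M) \mid b \cdot \overline P^0 x = \overline P^0 x\}.
\]

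The crux — and the only hard step — is to show that every $(a,b) \in \Gamma_F$ acts on $(\mathbb R^q \times \overline P^0 x, g_F)$ as an isometry. Writing $\lambda$ for the similarity ratio of $(a,b)$ on $\tilde M$, both $a$ and $b$ act as similarities of ratio $\lambda$ on their respective factors, and the claim is that $\lambda = 1$. Since $\overline P^0$ is normal in $\overline P$, the condition $b \cdot \overline P^0 x = \overline P^0 x$ yields a factorization $b = h s$ with $h \in \overline P^0$ and $s \in \mathrm{Stab}_{\overline P}(x)$. Lemma~\ref{lemmaNilpotent} says $h$ has ratio $1$. The stabilizer $\mathrm{Stab}_{\overline P}(x)$ is compact because $\overline P$ acts properly on $N$ (Lemma~\ref{lemmaLieGroup}), and the similarity ratio is a continuous homomorphism $\mathrm{Sim}(N) \to \mathbb R_{>0}$; as the only compact subgroup of $\mathbb R_{>0}$ is trivial, $s$ is also an isometry of $N$. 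Hence $b$ has ratio $1$, forcing $\lambda = 1$.

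Once this is established, $\Gamma_F$ acts freely, properly discontinuously, and isometrically on $(\mathbb R^q \times \overline P^0 x, g_F)$, the quotient inherits a smooth Riemannian metric, and by construction this metric is induced by $g$. I expect no further difficulty: the entire weight of the argument sits on the ratio-vanishing step, which itself reduces to the already-proved fact that $\overline P^0$ acts by isometries together with compactness of point-stabilizers of the proper $\overline P$-action on $N$.
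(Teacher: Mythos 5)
Your proof is correct, and while its overall skeleton (realize $\overline F$ as the quotient of $\mathbb R^q \times \overline P^0 x$ by the subgroup of $\pi_1(M)$ preserving it, then show that subgroup acts by isometries) matches what the paper does implicitly, your argument for the crucial ratio-one step is genuinely different. The paper's proof is a two-line application of completeness: $\mathbb R^q \times \overline P^0 x$ is complete (being a product of a Euclidean space and a flat torus, by Lemma~\ref{lemmaP0Abelian}), so any similarity of ratio $\neq 1$ preserving it would have a fixed point by the Banach fixed point theorem, contradicting the freeness of the $\pi_1(M)$-action on $\tilde M$. You instead factor $b = hs$ with $h \in \overline P^0$ an isometry (Lemma~\ref{lemmaNilpotent}) and $s$ in the point-stabilizer $\mathrm{Stab}_{\overline P}(x)$, which is compact by properness (Lemma~\ref{lemmaLieGroup}); since $\mathbb R_{>0}$ has no nontrivial compact subgroups, $s$ is an isometry too. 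Your route avoids both the completeness/flatness input from Lemma~\ref{lemmaP0Abelian} and the freeness of the deck action, relying only on the group-theoretic facts already in place ($\overline P^0 \subseteq \mathrm{Isom}(N)$ and properness of $\overline P$ on $N$), so it is marginally more robust; the paper's version is shorter because it reuses the completeness it has just established for the flatness claim. Your preliminary reduction (that $\mathbb R^q \times \overline P^0 x$ is an open and closed $\overline P^0$-orbit inside $\mathbb R^q \times \overline P x = f^{-1}(\overline F)$, so that the deck group of the restricted covering is exactly the elements with $b \cdot \overline P^0 x = \overline P^0 x$) is sound and makes explicit what the paper leaves implicit.
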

\begin{proof}
$\mathbb R^q \times \overline P^0 x$ is complete, so every similarity of $\mathbb R^q \times \overline P^0 x$ of ratio $\neq 1$ has a fixed point (by the Banach fixed point theorem). Thus, the elements of $\pi_1(M)$ with ratio $\neq 1$ act freely on $N / \overline P^0$, which proves the proposition.
\end{proof}

To study the dimension of $\overline F$, we will need the following lemma:

\begin{lemme} \label{lemmaFreeAction}
The group $\pi_1(M)$ acts freely on $N$.
\end{lemme}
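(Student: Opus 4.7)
The plan is to prove that any $(a,b)\in\pi_1(M)\subseteq\mathrm{Sim}(\mathbb R^q)\times\mathrm{Sim}(N)$ whose $N$-component $b$ fixes some point $x_0\in N$ must satisfy $b=\mathrm{id}_N$, so that the induced action of $P$ on $N$ is free. I would split the argument according to the ratio $\lambda$ of the similarity $(a,b)$. If $\lambda\neq 1$, one may assume $\lambda<1$ by replacing $(a,b)$ with its inverse; then $b$ is a similarity of the connected Riemannian manifold $(N,g_N)$ of ratio $<1$ with fixed point $x_0$, and Proposition~\ref{propHomothetyFlat} forces $(N,g_N)$ to be isometric to a Euclidean space. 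This contradicts the non-flatness of $N$ required in the third case of Theorem~\ref{thmConformalStructure}. Hence $\lambda=1$ and $(a,b)$ is an isometry of $\tilde M$, with $b\in\mathrm{Isom}(N,g_N)$ fixing $x_0$.

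For this remaining isometric case, the plan is to exploit the structural results established earlier in this section. By Lemma~\ref{lemmaNilpotent} one has $\overline P^0\subseteq\mathrm{Isom}(N,g_N)$, and by Lemma~\ref{lemmaP0Abelian} this $\overline P^0$ is abelian with $P\cap\overline P^0$ dense in it; moreover the third-case hypothesis provides a strictly contracting element $(c,d)\in\pi_1(M)$, and $d$ has no fixed point in $N$ by a second application of Proposition~\ref{propHomothetyFlat}. Conjugating, $(c,d)^n(a,b)(c,d)^{-n}=(c^nac^{-n},d^nbd^{-n})$ lies in $\pi_1(M)$, and its $N$-component $d^nbd^{-n}\in P$ still fixes $d^n(x_0)$; but the sequence $d^n(x_0)$ converges to a point of the Cauchy completion of $(N,g_N)$ outside $N$ and thus leaves every compact set of $N$. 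I would combine this escape with the properness of the isometric action of $\overline P$ on the complete transverse metric $(N,h_N)$ coming from Theorem~\ref{thmFoliations} (via Lemma~\ref{lemmaLieGroup}), together with the cocompact lattice structure of $\pi_1(M)\cap T$ inside the isometry group $T=\mathrm{Isom}^+(\mathbb R^q)\times P_I$ (Lemmas~\ref{lemmaNilpotent} and~\ref{lemmaLattice}), to force $b=\mathrm{id}$.

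The main obstacle is this second step: in the ratio-$1$ case there is no direct contraction on $N$, so one cannot simply replay the Banach-style fixed-point argument that settled the first step. The real content is to convert the contracting dynamics of the non-isometric element $(c,d)$ in $\tilde M$ into rigidity for the ratio-$1$ element $b$ inside $N$. This requires careful bookkeeping on the interplay between the abelian group $\overline P^0$, the compact isotropy of $x_0$ in $\mathrm{Isom}(N,g_N)$, and the fact that $\pi_1(M)$ is a genuine lattice in $T$ rather than merely a discrete subgroup; I expect the argument to culminate in an accumulation statement for $\pi_1(M)$ in $\mathrm{Sim}(\tilde M)$ that contradicts its discreteness unless $b=\mathrm{id}_N$.
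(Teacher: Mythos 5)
Your first case (ratio $\neq 1$) is fine: Proposition~\ref{propHomothetyFlat} applied to $(N,g_N)$ does rule out a fixed point for a non-isometric component on $N$. (The paper gets this even more cheaply: if $b$ fixed a point of $N$ and the ratio were $\neq 1$, then $a$ would also fix a point of $\mathbb R^q$, so $(a,b)$ would fix a point of $\tilde M$.) The problem is the remaining case, ratio $1$, which is the whole content of the lemma, and there your proposal has a genuine gap that you yourself flag. You conjugate by a contracting element $(c,d)$ and track the $N$-components $d^nbd^{-n}$, observing that their fixed points $d^n(x_0)$ escape to the Cauchy boundary of $(N,g_N)$. That observation is correct but leads nowhere: an isometry of the complete transverse metric whose fixed point runs off to infinity contradicts neither properness nor discreteness (properness constrains elements moving a fixed compact set into itself, which these conjugates need not do), and no amount of bookkeeping with $\overline P^0$, the isotropy of $x_0$, or the lattice structure will by itself produce the accumulation you hope for. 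The ``$N$-side'' of the conjugation simply does not contract anything.

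The missing idea is to read the contradiction off the \emph{Euclidean} factor instead. If $u=(u',u'')$ with $u''$ fixing a point of $N$, then since $\pi_1(M)$ acts freely on $\tilde M$, $u'$ has no fixed point in $\mathbb R^q$; a similarity of $\mathbb R^q$ of ratio $\neq 1$ always has a fixed point, so $u'$ is an isometry $x\mapsto R_ux+t_u$ with $t_u\neq 0$ (this also disposes of your first case without invoking Proposition~\ref{propHomothetyFlat}). Now take $v=(v',v'')\in\pi_1(M)$ of ratio $\lambda<1$ and normalize $v'(x)=\lambda R_vx$. The conjugates $v^kuv^{-k}$ have $\mathbb R^q$-components $x\mapsto R_v^kR_uR_v^{-k}x+\lambda^kR_v^kt_u$: the translation parts are nonzero of norm $\lambda^k\abs{t_u}$, hence pairwise distinct, yet tend to $0$ while the rotation parts stay in the compact group $O(q)$ and the $N$-components retain fixed points. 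This produces an infinite relatively compact subset of $\pi_1(M)$, contradicting its discreteness. Note finally that your target statement (``$b=\mathrm{id}_N$, so $P$ acts freely'') is weaker than the lemma: one must also exclude elements $(a,\mathrm{id}_N)\in\pi_1(M)$ with $a\neq\mathrm{id}$, since the lemma is later used to conclude that an element of $\pi_1(M)$ acting trivially on $N$ is the identity (as in Lemma~\ref{lemmaGamma0}); the argument above covers this case automatically, since $\mathrm{id}_N$ certainly has a fixed point.
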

\begin{proof}
Consider some $u \in \pi_1(M)$ with $u \neq \mathrm{Id}$, and write $u = (u', u'')$, where $u' \in \mathrm{Sim}(\mathbb R^q)$ and $u'' \in \mathrm{Sim}(N)$. Assume that $u''$ has a fixed point $a \in N$. Then $u'$ has no fixed point (because $\pi_1(M)$ acts freely on $\tilde M$). Therefore, $u'$ is an isometry of $\mathbb R^q$, so one may write $u'(x) = R_u x + t_u$ for $x \in \mathbb R^q$, where $R_u \in O(\mathbb R^q)$ and $t_u \in \mathbb R^q$, with $t_u \neq 0$.

Now, consider $v \in \pi_1(M)$ with ratio $\lambda \in (0,1)$, and write $v = (v', v'')$, where $v' \in \mathrm{Sim}(\mathbb R^q)$ and $v'' \in \mathrm{Sim}(N)$. We have $v'(x) = \lambda R_v x + t_v$ for $x \in \mathbb R^q$, where $R_v \in O(\mathbb R^q)$ and $t_v \in \mathbb R^q$. Since $v'$ has a fixed point in $\mathbb R^q$, we may apply a translation in $\mathbb R^q$ in order to assume that $t_v = 0$.

Now for all $k \in \mathbb N$ and $x \in \mathbb R^q$, we have \[ (v')^k (u') (v')^{-k} (x) = R_v^k R_u R_v^{-k} x + \lambda^k R_v^k t_u. \]
Furthermore, $(v'')^k u'' (v'')^{-k}$ has a fixed point because $u''$ has a fixed point. Thus, $\setof{v^k u v^{-k}}{k \in \mathbb N}$ is an infinite, relatively compact subset of $\pi_1(M)$, which contradicts the fact that $\pi_1(M)$ is discrete.
\end{proof}

We write $d = \mathrm{dim}(\overline F)$. Then $\mathcal F$ induces a foliation of dimension $q$ on $\overline F$, so $q \leq d \leq q + n$.

If $\overline F$ has dimension $q$, then $\mathcal F$ has codimension $0$ in $F$, so $\overline F = F$, and therefore $F$ is compact. But by Lemma~\ref{lemmaFreeAction}, $F$ is homeomorphic to $\mathbb R^q$, so this is impossible.

If $\overline F$ has dimension $q + n$, then $\overline F$ is open and closed in $M$. Since $M$ is connected, we have $\overline F = M$, which contradicts the fact that $M$ is not flat.

Therefore, $q < d < q + n$ and Theorem~\ref{thmClosures} is proved.

\subsection{End of the proof of Theorem~\ref{thmTori}} Define $\Gamma_0 = \pi_1(M) \cap \left(\mathrm{Sim}(\mathbb R^q) \times \overline P^0\right)$. For all $x \in N$, consider the following subgroup of $\pi_1(M)$: \[ S_x = \setof{p \in \pi_1(M)}{p \cdot x \in \overline P^0 x}. \]

\begin{lemme} \label{lemmaGamma0} The group $\Gamma_0$ is contained in  $\mathbb R^q \times \overline P^0$. Moreover, it is a lattice in $\mathbb R^q \times \overline P^0$.
\end{lemme}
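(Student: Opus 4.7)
The plan is to establish the two assertions in turn: first the inclusion, then the lattice property.

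For the inclusion $\Gamma_0 \subseteq \mathbb R^q \times \overline P^0$, I would take any $(a, b) \in \Gamma_0$ and use Lemma~\ref{lemmaNilpotent}, which shows that $\overline P^0$ acts on $N$ by isometries, so $b$ has ratio $1$. Since the ratio of a similarity of the product $\tilde M = \mathbb R^q \times N$ must agree on both factors, $a$ has ratio $1$ as well, whence $a \in \mathrm{Isom}(\mathbb R^q) = O(q) \ltimes \mathbb R^q$. To eliminate the rotation part, I would invoke the consequence drawn inside the proof of Lemma~\ref{lemmaNilpotent} that the projection of $\pi_1(M) \cap T$ onto the $SO(q)$-factor is a lattice in the compact group $SO(q)$, hence finite. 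Combined with the finite cover already taken at the beginning of Section~\ref{sectEndProofs} so that the holonomy respects the de Rham decomposition, this lets us assume the rotation component of every element of $\Gamma_0$ is trivial, so $a \in \mathbb R^q$.

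For the lattice property, I would invoke Proposition~\ref{propEquivLattice} applied to the ambient lattice $\pi_1(M)$ in $G := \mathrm{Sim}(\tilde M) \cap (\mathrm{Sim}(\mathbb R^q) \times \overline P)$ (Lemma~\ref{lemmaLattice}) and to the normal subgroup $N := \mathrm{Sim}(\tilde M) \cap (\mathrm{Sim}(\mathbb R^q) \times \overline P^0)$, which is normal since $\overline P^0$ is normal in $\overline P$. The quotient $G/N$ injects into the discrete group $\overline P / \overline P^0$, and the image of $\pi_1(M)$ in $\overline P/\overline P^0$ coincides with that of $P$, which by Lemma~\ref{lemmaP0} and the equality $\overline P = P \cdot \overline P^0$ exhausts $\overline P/\overline P^0$. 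Hence the image of $\pi_1(M)$ in $G/N$ is all of $G/N$, trivially a lattice, and Proposition~\ref{propEquivLattice} yields that $\Gamma_0$ is a lattice in $N$. The same ratio argument identifies $N$ with $\mathrm{Isom}(\mathbb R^q) \times \overline P^0$, inside which $\mathbb R^q \times \overline P^0$ is closed with compact quotient $O(q)$. Given any compact set $K \subseteq N$ with $K \cdot \Gamma_0 = N$, for every $h \in \mathbb R^q \times \overline P^0$ the decomposition $h = k \gamma$ with $k \in K$, $\gamma \in \Gamma_0 \subseteq \mathbb R^q \times \overline P^0$ forces $k = h\gamma^{-1} \in K \cap (\mathbb R^q \times \overline P^0)$, a compact set. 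Hence $\Gamma_0$ is cocompact, and thus a lattice, in $\mathbb R^q \times \overline P^0$ as well.

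The main obstacle I foresee is the inclusion step: rigorously killing the rotation part of the $\mathbb R^q$-component of any element of $\Gamma_0$. The finite-cover reduction and the finite-$SO(q)$-image statement together should suffice, but it is worth spelling out precisely how the finite cover from Section~\ref{sectEndProofs} interacts with the present setup. Everything else is essentially a bookkeeping application of Proposition~\ref{propEquivLattice} together with elementary facts about closed cocompact subgroups.
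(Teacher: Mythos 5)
The inclusion step has a genuine gap. Reducing the rotation part of $a$ to a \emph{finite} subgroup of $O(q)$ is not the same as making it \emph{trivial}: a screw motion (rotation of order $2$ composed with a translation along its axis) is perfectly compatible with a finite image in $O(q)$ and with $\pi_1(M)$ acting freely, yet does not lie in $\mathbb R^q \times \overline P^0$. The finite cover taken at the start of Section~\ref{sectEndProofs} only arranges that the holonomy group of the connection preserves the de Rham factors; it says nothing about the linear parts of the Euclidean components of $\pi_1(M)$. Nor can you simply pass to a further finite-index subgroup to kill the $O(q)$-image, since that would replace $\Gamma_0$ by a smaller group, whereas the lemma (and its later uses, e.g.\ in Lemma~\ref{lemmaP} and Lemma~\ref{lemmaSubgroupGamma0}) concerns $\Gamma_0$ itself. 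The paper's mechanism is different and is missing from your argument: first, $\Gamma_0$ is \emph{abelian}, because $\overline P^0$ is abelian so every commutator in $\Gamma_0$ acts trivially on $N$, and $\pi_1(M)$ acts freely on $N$ (Lemma~\ref{lemmaFreeAction}), forcing the commutators to be the identity; second, Bieberbach's theorem applied to the cocompact isometric action of $S_x$ on $\mathbb R^q \times \overline P^0 x$ shows that the image of $\Gamma_0$ in $\mathrm{Isom}(\mathbb R^q)$ contains translations spanning $\mathbb R^q$; an abelian group of Euclidean isometries containing such translations consists only of translations, which is what kills the rotation parts.

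Your second half is, by contrast, a legitimate alternative to the paper's treatment of cocompactness: applying Proposition~\ref{propEquivLattice} with $N = \mathrm{Sim}(\tilde M) \cap (\mathrm{Sim}(\mathbb R^q) \times \overline P^0)$, noting that $G/N$ is the discrete component group $\overline P/\overline P^0$ onto which $\pi_1(M)$ surjects by Lemma~\ref{lemmaP0}, and then descending from $\mathrm{Isom}(\mathbb R^q) \times \overline P^0$ to $\mathbb R^q \times \overline P^0$ using compactness of $O(q)$, does give that $\Gamma_0$ is a lattice --- \emph{provided} the inclusion $\Gamma_0 \subseteq \mathbb R^q \times \overline P^0$ has already been established. (The paper instead gets cocompactness from the same Bieberbach argument on the leaf closure.) So the proposal stands or falls with the inclusion step, and as written that step fails.
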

\begin{proof}
Since $\overline P^0$ is abelian, for all $u, v \in \Gamma_0$, the commutator $uvu^{-1}v^{-1}$ acts trivially on $N$. By Lemma~\ref{lemmaFreeAction}, $uvu^{-1}v^{-1} = \mathrm{Id}$. Therefore, $\Gamma_0$ is abelian.

The closure of a leaf $f(\mathbb R^q \times \overline P^0 x)$ (where $x \in N$) is isomorphic to $\left(\mathbb R^q \times \overline P^0 x\right) / S_x$. In particular, since the closures of the leaves are closed Riemannian manifolds (by Proposition~\ref{propRiemannianManifolds}), $S_x$ acts cocompactly by isometries on $\mathbb R^q \times \overline P^0 x$. Also recall that $\mathbb R^q \times \overline P^0 x$ is the product of a Euclidean space by a flat torus (Lemma~\ref{lemmaP0Abelian}). By applying Theorem~\ref{thmBieberbach} to the universal cover of $\mathbb R^q \times \overline P^0 x$, we deduce that $S_x \cap (\mathbb R^q \times \overline P^0)$ (which is a subgroup of $\Gamma_0$) also acts cocompactly. In particular, the image of $\Gamma_0$ by the projection onto $\mathrm{Sim}(\mathbb R^q)$ contains translations: since it is abelian, it contains only translations. Thus, $\Gamma_0$ is a lattice in $\mathbb R^q \times \overline P^0$.
\end{proof}


Now, consider the representation $\rho: \pi_1(M) \to \mathrm{Aut}(\Gamma_0)$ given by the action of $\pi_1(M)$ onto $\Gamma_0$ by conjugation.

\begin{lemme} \label{lemmaFiniteIndex}  There is a subgroup $J \subseteq \pi_1(M)$ of finite index such that $\rho(J)$ has no torsion.
\end{lemme}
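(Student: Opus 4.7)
The plan is to find a torsion-free subgroup of finite index in $\mathrm{Aut}(\Gamma_0)$ and pull it back via $\rho$. The first step will be to identify the structure of $\Gamma_0$. By Lemma~\ref{lemmaP0Abelian}, $\overline P^0$ is a connected abelian Lie group and therefore isomorphic to $\mathbb R^s \times \mathbb T^t$ for some $s,t \geq 0$. Combined with Lemma~\ref{lemmaGamma0}, this makes $\Gamma_0$ a lattice in $\mathbb R^{q+s} \times \mathbb T^t$; lifting through the universal cover $\mathbb R^{q+s+t}$ produces an honest lattice in a real vector space, so $\Gamma_0$ itself is finitely generated abelian. I will write $\Gamma_0 \cong \mathbb Z^r \oplus F$ for its decomposition into a free part and a finite torsion part.

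The heart of the proof will be a Minkowski-style congruence argument. Fix an integer $n \geq 3$ divisible by $|F|$ and let
\[ H_n = \ker\!\bigl(\mathrm{Aut}(\Gamma_0) \longrightarrow \mathrm{Aut}(\Gamma_0 / n \Gamma_0)\bigr). \]
Since $\Gamma_0/n\Gamma_0$ is finite, $H_n$ has finite index in $\mathrm{Aut}(\Gamma_0)$. To prove that $H_n$ is torsion-free, I would take a torsion element $\alpha \in H_n$ and analyze it through the exact sequence
\[ 1 \to \mathrm{Hom}(\mathbb Z^r, F) \to \mathrm{Aut}(\Gamma_0) \to GL_r(\mathbb Z) \times \mathrm{Aut}(F) \to 1. \]
The induced automorphism of $\Gamma_0/F \cong \mathbb Z^r$ is a torsion element of the principal congruence subgroup of level $n$ in $GL_r(\mathbb Z)$, hence trivial by the classical theorem of Minkowski. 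The restriction $\alpha|_F$ is the identity because $n$ kills $F$. Finally, the remaining off-diagonal piece lives in $\mathrm{Hom}(\mathbb Z^r, F)$, and the congruence condition $\alpha \in H_n$ forces this homomorphism to take values in $nF = 0$, so $\alpha = \mathrm{id}$.

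Once $H_n$ is in hand, I will set $J = \rho^{-1}(H_n)$; this has finite index in $\pi_1(M)$ since $H_n$ has finite index in $\mathrm{Aut}(\Gamma_0)$, and $\rho(J) \subseteq H_n$ is torsion-free by construction. The only delicate step is the Minkowski-style analysis of $H_n$, but this is essentially a bookkeeping exercise once $n$ is chosen to be at least $3$ and to annihilate $F$, so that the free part, the torsion part, and the off-diagonal contribution all fall under a single congruence condition.
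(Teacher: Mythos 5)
Your proof is correct, but it takes a different route from the paper. The paper's own argument is a two-line appeal to Selberg's lemma: $\Gamma_0$ is a finitely generated abelian group, so $\mathrm{Aut}(\Gamma_0)$ sits (essentially) inside $GL_m(\mathbb Z)$, and Selberg's lemma applied to the finitely generated linear group $\rho(\pi_1(M))$ produces a torsion-free finite-index subgroup whose preimage under $\rho$ is $J$. You instead build the torsion-free finite-index subgroup explicitly, as the principal congruence subgroup $H_n = \ker\bigl(\mathrm{Aut}(\Gamma_0) \to \mathrm{Aut}(\Gamma_0/n\Gamma_0)\bigr)$ for $n \geq 3$ with $n$ annihilating the torsion part $F$, and verify torsion-freeness by splitting a torsion element along the exact sequence $1 \to \mathrm{Hom}(\mathbb Z^r, F) \to \mathrm{Aut}(\Gamma_0) \to GL_r(\mathbb Z) \times \mathrm{Aut}(F) \to 1$ and invoking Minkowski on the $GL_r(\mathbb Z)$ piece; the three checks (free part, torsion part, off-diagonal part) are all correct as you state them. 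What your version buys: it is self-contained (Selberg's lemma for subgroups of $GL_m(\mathbb Z)$ is in any case just Minkowski's theorem), it does not need $\rho(\pi_1(M))$ to be finitely generated, and it treats the possible torsion in $\Gamma_0$ honestly --- the paper's assertion that $\mathrm{Aut}(\Gamma_0)$ is a subgroup of $GL_m(\mathbb Z)$ is literally accurate only when $\Gamma_0$ is free abelian, and your exact-sequence analysis is exactly the small supplement needed to cover the general case. What the paper's version buys is brevity, at the cost of citing a (stronger) external result.
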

\begin{proof}
The group $\Gamma_0$ is a lattice in $\mathbb R^q \times \overline P^0$, which implies that $\Gamma_0$ is a finitely generated abelian group. Therefore, $\mathrm{Aut}(\Gamma_0)$ is a subgroup of $GL_m(\mathbb Z)$ for some $m \in \mathbb Z$. By Selberg's lemma (see for example~\cite{MR925989}), there is a subgroup of $\rho(\pi_1(M))$ of finite index which is torsion-free: the preimage $J$ of this subgroup by $\rho$ is the desired group.
\end{proof}

In the following, we fix such a subgroup $J \subseteq \pi_1(M)$.

\begin{lemme} \label{lemmaSubgroupGamma0}
For all $x \in N$, the group $S_x \cap J$ is a subgroup of $\Gamma_0$.
\end{lemme}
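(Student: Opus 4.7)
Let $p \in S_x \cap J$; the goal is to show $p \in \Gamma_0$. The approach combines a Bieberbach-type analysis of $S_x$ acting on the leaf closure through $x$ with the torsion-freeness of $\rho(J)$.

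First, I set up a ``rotational part'' homomorphism $\chi : S_x \to F$. By Proposition~\ref{propRiemannianManifolds} and Lemma~\ref{lemmaP0Abelian}, $S_x$ acts cocompactly by isometries on the flat Riemannian manifold $\mathbb{R}^q \times \overline P^0 x$; lifting to its Euclidean universal cover, the pullback $\tilde S_x$ is a Bieberbach group, whose translation subgroup $\tilde T$ has finite index and finite point group $F$. Each $\gamma \in \Gamma_0$ acts by translation (its $\mathbb{R}^q$-part is a translation by Lemma~\ref{lemmaGamma0}, and $\overline P^0$, being abelian, acts on its orbit by left multiplication), so $\Gamma_0 \subseteq \ker \chi$, and the lift $\tilde \Gamma_0 \subseteq \tilde T$ is itself a full-rank lattice in the ambient Euclidean space.

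Next, I link $\chi$ and $\rho$: for $\gamma \in \Gamma_0$ lifting to a translation $\tilde \gamma$, the conjugate $p \gamma p^{-1}$ lifts to translation by $\chi(p) \cdot \tilde \gamma$, so $\rho(p)|_{\Gamma_0} = 1$ is equivalent to $\chi(p)$ fixing $\tilde \Gamma_0$ pointwise, hence (since $\tilde \Gamma_0$ spans the ambient space) to $\chi(p) = 1$. Because $\chi(p) \in F$ has finite order, so does $\rho(p)$; the torsion-freeness of $\rho(J)$ then gives $\rho(p) = 1$, hence $\chi(p) = 1$, and $p$ acts by translation on $\mathbb{R}^q \times \overline P^0 x$. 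This forces $p' \in \mathbb{R}^q$, and there exists $h \in \overline P^0$ with $p''|_{\overline P^0 x} = h|_{\overline P^0 x}$.

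The final and most delicate step is to conclude $p'' \in \overline P^0$. Setting $k = h^{-1} p''$, the vanishing $\rho(p) = 1$ forces $p''$ to centralize the dense subgroup $P \cap \overline P^0$ of $\overline P^0$, hence all of $\overline P^0$ by continuity; thus $k \in Z_{\overline P}(\overline P^0)$, and by construction $k$ fixes $\overline P^0 x$ pointwise. The identity component of $Z_{\overline P}(\overline P^0)$ coincides with $\overline P^0$ (every connected Lie subgroup of $\overline P$ containing the identity lies in $\overline P^0$), and since $\overline P^0 x$ has positive dimension by Theorem~\ref{thmClosures}, a Killing-field argument shows that the pointwise stabilizer of $\overline P^0 x$ in $\overline P$ is discrete. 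Combined with the definition of $J$, this forces $k \in \overline P^0$, whence $p'' = hk \in \overline P^0$ and $p \in \Gamma_0$. The main obstacle is precisely this last step: controlling the component structure of $Z_{\overline P}(\overline P^0)$ and its interaction with the choice of $J$.
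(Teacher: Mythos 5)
The first part of your argument is sound and runs parallel to the paper's proof, just with a different engine: where you extract the finite order of $\rho(p)$ from the finiteness of the Bieberbach point group of $S_x$ acting on the flat manifold $\mathbb R^q\times\overline P^0x$, the paper observes that $tp$ has a fixed point for a suitable $t\in\mathbb R^q\times\overline P^0$, so $\langle tp\rangle$ is relatively compact by properness of the action, and its image in the \emph{discrete} group $\overline P/\overline P^0$ is therefore finite; this yields $p^n\in\Gamma_0$ for some $n\geq 1$, hence $\rho(p)^n=1$ since $\Gamma_0$ is abelian. Both routes then use the torsion-freeness of $\rho(J)$ identically. (Note that your equivalence between $\rho(p)=1$ and $\chi(p)=1$ silently uses that an element of $\Gamma_0$ is determined by its translation vector on $\mathbb R^q\times\overline P^0x$; this is true, but it needs Lemma~\ref{lemmaFreeAction} and deserves a line.)

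The genuine gap is in your last step. You must show $p''\in\overline P^0$, and you reduce this to showing that $k=h^{-1}p''$, which centralizes $\overline P^0$ and fixes $\overline P^0x$ pointwise, lies in $\overline P^0$. Neither of your two supporting claims holds up. First, the pointwise stabilizer of $\overline P^0x$ in $\overline P$ is not discrete in general: an isometry fixing the submanifold $\overline P^0x$ pointwise is determined by its derivative at one point of it, which is an orthogonal transformation of the normal space (of dimension $\dim N-\dim(\overline P^0x)$, possibly $\geq 2$), and no argument is given that $\overline P$ meets this orthogonal group discretely. Second, and more decisively, even granting discreteness, membership of $k$ in a discrete subgroup does not place $k$ in the \emph{connected} group $\overline P^0$ — you would need $k=e$, for which you have no grounds — and the appeal to ``the definition of $J$'' is empty here, since the only property of $J$ (torsion-freeness of $\rho(J)$) has already been spent in deriving $\rho(p)=1$. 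The paper closes this step with an ingredient your argument never produces: from the relative compactness of $\langle tp\rangle$ and the discreteness of $\overline P/\overline P^0$ it gets $(p'')^n\in\overline P^0$, i.e.\ the class of $p''$ in $\overline P/\overline P^0$ is torsion, and it is this fact, combined with $\rho(p)=1$, that lets it conclude $p\in\mathrm{Sim}(\mathbb R^q)\times\overline P^0$. You should either import that properness argument or supply a genuine substitute; as written, the final paragraph does not prove $k\in\overline P^0$.
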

\begin{proof}
Choose $a \in S_x \cap J$. There exists an element $t \in \mathbb R^q \times \overline P^0$ such that the action of $ta$ on $\mathbb R^q \times N$ has a fixed point. Consider the subgroup $H$ of $\mathbb R^q \times \overline P$ generated by $ta$: by Proposition~\ref{propProperAction}, $H$ is relatively compact in $\mathbb R^q \times \overline P$. Then the image of $H$ by the projection $\mathbb R^q \times \overline P \to \overline P / \overline P^0$ is discrete and relatively compact, so it is finite. Thus, there exists $n \geq 1$ such that $(ta)^n \in \mathbb R^q \times \overline P^0$, which implies that $\rho(a^n)$ is trivial. Since $\rho(J)$ has no torsion, $\rho(a)$ is trivial.

We have shown that all the elements of $S_x \cap J$ act trivially on $\overline P^0$ by conjugation, which implies that $S_x \cap J \subseteq \mathrm{Sim}(\mathbb R^q) \times \overline P^0$.
\end{proof}

Since $J$ has finite index in $\pi_1(M)$, there is a finite covering $\tilde M / J \to M$: we will write $M' = \tilde M / J$ and show that the closures of the leaves of the foliation $\mathcal F'$ (induced by $\mathcal F$ on $M'$) are tori. Denote by $\overline {F'}$ the closure of a leaf of $\mathcal F'$. By Lemma~\ref{lemmaClosure}, $\overline {F'} = f'(\mathbb R^q \times \overline {P'}^0 x)$ for some $x \in N$, where $f'$ is the projection $\tilde M \to M'$ and $P'$ is the image of $J$ by the projection onto $\mathrm{Sim}(N)$. Since $P'$ has finite index in $P$, $\overline{P'}$ has finite index in $\overline P$ and therefore $\overline {P'}^0 = \overline P^0$, so $\overline {F'} = f'(\mathbb R^q \times \overline P^0 x)$: in other words, $\overline {F'} = (\mathbb R^q \times \overline P^0 x) / (S_x \cap J)$. Since $S_x \cap J$ is a subgroup of $\mathbb R^q \times \overline P^0$ (by Lemma~\ref{lemmaSubgroupGamma0} and Lemma~\ref{lemmaGamma0}), the group $(\mathbb R^q \times \overline P^0) / (S_x \cap J)$, which is the product of a linear space by a torus, acts transitively on $\overline {F'}$ by isometries. Since $\overline {F'}$ is compact, it is isometric to a flat torus, which ends the proof of Theorem~\ref{thmTori}.

\section{Classification in dimension 2} \label{sectDim2}
In this section, we assume that $\dim N = 2$ and prove Theorem~\ref{thmDim2}. Since $N$ is non-complete and simply connected, it is diffeomorphic to $\mathbb R^2$.

\begin{lemme}
The group $\overline P^0$ acts freely on $N$.
\end{lemme}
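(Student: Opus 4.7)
The plan combines the dimensional constraints from Theorem~\ref{thmClosures} with a topological argument exploiting the simple connectedness of $N$.

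First, I would read off the structure of $\overline{P}^0$. By Theorem~\ref{thmClosures}, every leaf closure has dimension $d$ with $q < d < q + n$; specializing to $n = \dim N = 2$ forces $d = q + 1$. Since $\overline{F} = f(\mathbb{R}^q \times \overline{P}^0 \cdot x)$, the orbit $\overline{P}^0 \cdot x$ must have dimension $1$ for every $x \in N$. Hence $\overline{P}^0$ is a connected abelian Lie group of dimension one—isomorphic to either $\mathbb{R}$ or $S^1$—and its stabilizers are necessarily discrete.

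Second, I would rule out $\overline{P}^0 \cong S^1$ topologically. Since $\tilde{M} = \mathbb{R}^q \times N$ is simply connected, so is $N$; as a connected simply connected smooth 2-manifold without boundary it is homeomorphic to $S^2$ or to $\mathbb{R}^2$, and the non-completeness of $N$ (third case of Theorem~\ref{thmConformalStructure}) rules out the compact case, so $N \approx \mathbb{R}^2$. But any continuous $S^1$-action on $\mathbb{R}^2$ has a fixed point: pick any orbit; either it is a point (done), or it is an embedded circle bounding a closed topological disk on which $S^1$ still acts, and Brouwer's fixed-point theorem supplies a fixed point inside. This contradicts the $1$-dimensionality of every orbit, leaving $\overline{P}^0 \cong \mathbb{R}$.

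Finally, I would establish freeness directly. Suppose for contradiction that some $g \in \overline{P}^0 \setminus \{e\}$ fixes a point $x \in N$. By abelianness (Lemma~\ref{lemmaP0Abelian}), $g$ fixes the entire orbit $\overline{P}^0 \cdot x$ pointwise, and since $\overline{P}^0$ acts by isometries (Lemma~\ref{lemmaNilpotent}), the differential $dg_x$ is an orthogonal transformation of $T_x N$ fixing the tangent line to the orbit; as $g \neq e$ it must act as the reflection across that line. Then $g^2$ has trivial differential at its fixed point $x$ and is therefore the identity on the connected manifold $N$, making $g$ an order-$2$ element of $\overline{P}^0 \cong \mathbb{R}$—a contradiction. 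The main obstacle I anticipate is the exclusion of $\overline{P}^0 \cong S^1$: the algebraic facts already in hand (abelian, isometric, discrete stabilizers) leave $S^1$ as an a priori allowable possibility, and it is really the low-dimensional topology of $N$—which makes Brouwer available—that closes the gap.
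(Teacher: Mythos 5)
Your route is genuinely different from the paper's, and most of it is sound, but there is one real gap at the very first step: you infer from the fact that every orbit $\overline P^0 x$ is $1$-dimensional that $\overline P^0$ itself is $1$-dimensional. That implication is not automatic --- orbit dimension only gives $\dim \overline P^0 = 1 + \dim \mathrm{Stab}(x)$, and ruling out positive-dimensional stabilizers is precisely a piece of the freeness you are trying to prove. The paper is careful about this order of deduction: it proves freeness \emph{first}, and only in the following lemma deduces $\overline P^0 \cong \mathbb R$, using freeness to identify $\overline P^0$ with one of its orbits. As written, your argument is circular at this point: if $\overline P^0$ had dimension $2$ with circle stabilizers at every point, your dichotomy ($\mathbb R$ versus $S^1$) never gets started and the order-two contradiction at the end is unavailable. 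Fortunately the gap is fillable by the same idea you use in your last step: the action is proper and by isometries (Lemmas~\ref{lemmaLieGroup} and~\ref{lemmaNilpotent}), so $\mathrm{Stab}(x)$ is compact and its isotropy representation in $O(T_xN)\cong O(2)$ is faithful; since $\overline P^0$ is abelian (Lemma~\ref{lemmaP0Abelian}), $\mathrm{Stab}(x)$ fixes the $1$-dimensional orbit $\overline P^0 x$ pointwise, hence its identity component lands in the subgroup of $SO(2)$ fixing a line pointwise, which is trivial; so $\dim\mathrm{Stab}(x)=0$ and $\dim\overline P^0 = 1$. A second, smaller imprecision: Brouwer gives a fixed point of a single homeomorphism, not of the whole group $S^1$; apply it to a topological generator of $S^1$ (its fixed point is then fixed by the closure of the cyclic group it generates, i.e.\ by all of $S^1$), or invoke Poincar\'e--Hopf for the generating vector field. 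This is the same fact the paper itself uses without further comment.

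Once these repairs are made, your argument is correct and is noticeably shorter than the paper's. The paper assumes some element of $\overline P^0$ has a fixed point, extracts from the closure of a one-parameter subgroup a circle $H\cong\mathbb R/\mathbb Z$ fixing a point, and then rules $H$ out by a global argument: it shows $H$ has a unique fixed point, studies the level sets of the curvature (which are $H$-invariant since $H$ acts by isometries), and uses the properness of the action of a genuine similarity $h\in P$ to push a bounded component of a curvature sublevel off itself, reaching a contradiction on homotopy classes in the punctured plane. Your local isotropy-representation argument --- the stabilizer fixes the orbit pointwise, hence acts on the normal line by $\pm 1$, hence is trivial or of order two --- bypasses the curvature analysis entirely and replaces the global topology by linear algebra at a single tangent space; the only topological input you still need is the nonexistence of a fixed-point-free circle action on $\mathbb R^2$, which the paper needs as well.
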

\begin{proof}
We assume that there exists $p \in \overline P^0$ which has a fixed point $a \in N$ and look for a contradiction. Consider a one-parameter subgroup $G \subseteq \overline P^0$ which contains $a$. Then the flow induced by $G$ on $N$ has a closed orbit in $\mathbb R^2$, so it has a fixed point $x_0 \in N$ (here we use the fact that $\mathrm{dim}(N) = 2$). Thus, the closure $\overline G$ of $G$ in $\overline P^0$ fixes $x_0$. Since $\overline P^0$ acts properly, $\overline G$ is compact. Since $\overline G$ is abelian (by Lemma~\ref{lemmaP0Abelian}), it is a torus: it contains a closed Lie subgroup $H$ which is isomorphic to $\mathbb R / \mathbb Z$, and which fixes $x_0$.

Choose a point $x_1$ which is not fixed by $H$. Then $Hx_1$ defines a closed curve in $N$. Thus, $H$ has a fixed point $x_0' \in N$ such that the curve $Hx_1$ in $N \setminus \{x_0'\}$ is not homotopic to a constant. Since $N$ is connected, the homotopy class of the curve $Hx$ in $N \setminus \{x_0'\}$ does not depend on the choice of $x \in N \setminus \{x_0'\}$. Therefore, $H$ has only one fixed point $x_0' = x_0$.

From now on, denote by $K(x)$ the curvature of $N$ at $x \in N$. The curvature $K$ is not constant on $N \setminus \{x_0\}$ because $N$ is a non-flat manifold with similarities: thus there is some $x_2 \in N \setminus \{x_0\}$ such that $K(x_1) \neq K(x_2)$. Assume that the curve $Hx_2$ is in the unbounded component of the complement of $Hx_1$ (this is always possible up to a permutation of $x_1$ and $x_2$). Then the connected component $C$ of $N \setminus Hx_2$ containing $x_1$ is bounded.

Choose a similarity $h \in P$. Since $P$ acts properly, $h^n C \cap C = \emptyset$ for some $n \in \mathbb N$. But $H(h^n x_1)$ (the orbit of $h^n x_1$ under the flow $H$) does not intersect $H(h^n x_2)$ (because $K(x_1) \neq K(x_2)$), so it is contained in $h^n(C)$. Then $H(h^n x_1)$ is homotopic to a constant in $N \setminus \{x_0\}$: this contradiction ends the proof of the lemma.
\end{proof}

\begin{lemme}
The Lie group $\overline P^0$ is isomorphic to $\mathbb R$.
\end{lemme}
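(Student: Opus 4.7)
The strategy splits into two parts: first show that $\dim \overline{P}^0 = 1$ using the dimension bound on closures of leaves already at our disposal, then use the topology of $N$ to exclude the compact case. For the dimension, the preceding lemma gives that $\overline{P}^0$ acts freely on $N$, so for any $x \in N$ the orbit $\overline{P}^0 x$ is a submanifold of dimension $\dim \overline{P}^0$. Lemma~\ref{lemmaClosure} identifies the closure of a leaf of $\mathcal{F}$ as $f(\mathbb{R}^q \times \overline{P}^0 x)$, which has dimension $q + \dim \overline{P}^0$. Combined with the bound $q < d < q + n = q + 2$ from Theorem~\ref{thmClosures}, this forces $\dim \overline{P}^0 = 1$, so $\overline{P}^0$ is isomorphic either to $\mathbb{R}$ or to $S^1$.

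To rule out $S^1$, I would pin down the topological type of $N$. Since $\tilde M = \mathbb{R}^q \times N$ is the universal cover of $M$, the factor $N$ is simply connected. Moreover $N$ is non-compact, because by Hopf--Rinow any Riemannian metric on a compact manifold (without boundary) is complete, whereas $N$ is non-complete by hypothesis. The classification of simply connected surfaces then yields $N \cong \mathbb{R}^2$ topologically.

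Now suppose for contradiction that $\overline{P}^0 \cong S^1$. Then it acts smoothly (since $\overline{P}^0 \subseteq \mathrm{Isom}(N)$), freely (preceding lemma), and properly (Lemma~\ref{lemmaLieGroup}) on $N \cong \mathbb{R}^2$, so $N \to N / \overline{P}^0$ is a principal $S^1$-bundle over a connected $1$-manifold $B$, which is either $\mathbb{R}$ or $S^1$. Using $\pi_2(B) = 0$, the long exact homotopy sequence of the fibration yields an injection $\mathbb{Z} = \pi_1(S^1) \hookrightarrow \pi_1(N)$, contradicting the simple-connectedness of $N$. Hence $\overline{P}^0 \cong \mathbb{R}$. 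I do not expect a serious obstacle: the proof is a synthesis of earlier results together with the elementary topological fact that a compact one-dimensional Lie group cannot act freely on $\mathbb{R}^2$; the only nontrivial input beyond the machinery already built up is the identification of $N$'s topological type.
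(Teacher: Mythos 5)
Your proof is correct and follows essentially the same route as the paper: the dimension bound $q < d < q+2$ from Theorem~\ref{thmClosures} together with the free action forces $\dim \overline{P}^0 = 1$, and then the circle is excluded because a compact one-dimensional group cannot act freely on $N \cong \mathbb{R}^2$. The only (inessential) difference is in that last exclusion, where the paper invokes the fact that a flow on the plane with a closed orbit has a fixed point, while you use the homotopy exact sequence of the would-be principal $S^1$-bundle; both are valid.
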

\begin{proof}
For all $x \in N$, Theorem~\ref{thmClosures} implies that $\mathbb R^q \times \overline P^0 x$ has dimension $q + 1$, so $\overline P^0 x$ has dimension $1$. Since $\overline P^0$ acts freely and transitively on $\overline P^0 x$, we deduce that $\overline P^0$ is diffeomorphic to $\overline P^0 x$, so it has dimension $1$. Furthermore, $\mathbb R / \mathbb Z$ cannot act freely on the plane (if a flow has a closed orbit, then it has a fixed point), so $\overline P^0$ is isomorphic to $\mathbb R$.
\end{proof}

In the following, we fix an identification of $\overline P^0$ with $\mathbb R$.

\begin{lemme} \label{lemmeYZ}
There exists an open interval $(a, b)$ of $\mathbb R$ and a diffeomorphism $\Phi : \mathbb R \times (a, b) \to N$ which provides a system of coordinates $(y, z)$ on $N$ (with $y \in \mathbb R$, $z \in (a, b)$) such that:

\begin{itemize}
\item Writing $y \in \mathbb R$ and $z \in (a, b)$ the coordinates in $N$ given by $\Phi$, the action of $p \in \overline P^0$ on $N$ is $p(y, z) = (y + p, z)$.
\item The metric on $N$ is given by $\varphi(z) dy^2 + dz^2$, where $\varphi : (a, b) \to \mathbb R_{> 0}$ is a smooth function.
\end{itemize}
\end{lemme}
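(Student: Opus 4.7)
The plan is to realize $N$ as a warped product by combining the flow of the fundamental Killing field of $\overline{P}^0$ with a single orthogonal geodesic. From the previous lemmas, $\overline{P}^0\cong\mathbb{R}$ acts freely and by isometries on $N$, and by Lemma~\ref{lemmaLieGroup} this action is proper. Let $X$ be the Killing vector field generating the flow $\phi_y$; it vanishes nowhere. First I would examine the orbit space $Q=N/\overline{P}^0$: freeness and properness make it a smooth $1$-manifold, and since $\tilde M=\mathbb{R}^q\times N$ is simply connected, so is $N$; hence the homotopy exact sequence of the principal bundle $\mathbb{R}\to N\to Q$ yields $\pi_1(Q)=0$, so $Q$ is diffeomorphic to an open interval. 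Equipping $Q$ with the submersion metric coming from $X^\perp$, I identify $Q$ isometrically with some $(c,d)\subseteq\mathbb{R}$.

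Next, fix $x_0\in N$ and let $\gamma\colon(a,b)\to N$ be the maximal unit-speed geodesic with $\gamma'(0)\perp X|_{x_0}$. The standard identity for Killing fields gives $\tfrac{d}{dt}g(\gamma',X)=g(\gamma',\nabla_{\gamma'}X)=0$ (since $\nabla X$ is skew-symmetric), so $\gamma$ remains horizontal. I then set
\[ \Phi\colon\mathbb{R}\times(a,b)\to N,\qquad \Phi(y,z)=\phi_y(\gamma(z)). \]
Since $\Phi_*\partial_y=X$ and $\Phi_*\partial_z=(\phi_y)_*\gamma'(z)$ are nonzero and orthogonal, $\Phi$ is a local diffeomorphism; using $\phi_y^*X=X$ and the fact that $\phi_y$ is an isometry, a direct computation yields $g_{yy}=|X|^2|_{\gamma(z)}=:\varphi(z)$, $g_{zz}=|\gamma'(z)|^2=1$, and $g_{yz}=g(X,\gamma'(z))|_{\gamma(z)}=0$, which is precisely the metric stated in the lemma.

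The main obstacle is proving that $\Phi$ is a global diffeomorphism. Injectivity is immediate: if $\Phi(y_1,z_1)=\Phi(y_2,z_2)$, then $\gamma(z_1)$ and $\gamma(z_2)$ lie on the same $\mathbb{R}$-orbit, so $\pi\circ\gamma(z_1)=\pi\circ\gamma(z_2)$; but $\pi\circ\gamma$ is a unit-speed geodesic in $(c,d)$, hence an affine injection, so $z_1=z_2$, and the free action forces $y_1=y_2$. For surjectivity, it suffices to show that $\pi\circ\gamma$ covers $(c,d)$ entirely. Suppose instead that $\pi\circ\gamma(z)\to q^*\in(c,d)$ as $z\to b$. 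Since $X^\perp$ is a line field, I can integrate it through any point of $\pi^{-1}(q^*)$ and project to obtain a local horizontal section $\sigma$ of $\pi$ over a neighborhood $U$ of $q^*$. In the trivialization $U\times\mathbb{R}\to\pi^{-1}(U)$, $(q,t)\mapsto\phi_t(\sigma(q))$, the vector $\partial_q$ is horizontal and $\partial_t$ is vertical, so the metric has no cross-term and any horizontal curve has constant $t$-coordinate. Thus the tail of $\gamma$ lies at some fixed $t_0$ and converges to $(q^*,t_0)\in N$, contradicting the maximality of $(a,b)$; the argument at the left endpoint is identical. This completes the proof.
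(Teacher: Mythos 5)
Your proof is correct and follows essentially the same route as the paper: pass to the orbit space $N/\overline P^0$ (an interval, by freeness, properness and simple connectedness), pick a maximal unit-speed curve orthogonal to the orbits, and define $\Phi$ by flowing it under $\overline P^0$. The only differences are cosmetic: the paper trivializes $N\cong\mathbb R^2$ first and takes the maximal integral curve of the unit normal field (which, by the Killing identity you invoke, is the same curve as your horizontal geodesic), and it establishes surjectivity by a monotonicity argument in those explicit coordinates rather than by your local-section/extension argument in the quotient.
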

\begin{proof}
The group $\overline P^0$ acts freely and properly on $N$. Thus, $N / \overline P^0$ has a natural structure of smooth manifold of dimension $1$. Furthermore, $N / \overline P^0$ is simply connected because $N$ is, so $N / \overline P^0$ is diffeomorphic to $\mathbb R$. Thus, $N \to N / \overline P^0$ is a fiber bundle over a contractible space, so it is trivial. Up to isometry, we may write $N = (\mathbb R^2, g_N)$, where $g_N$ is a Riemannian metric on $\mathbb R^2$, and $\overline P^0$ acts by translation on the first coordinate.

Denote by $\mathcal F_N$ the foliation induced by the submersion $s : N \to N / \overline P^0$. Then $\mathcal F_N$ is the foliation induced on $N$ by the closures of the leaves of $\mathcal F$: it is the standard foliation of $\mathbb R^2$ by horizontal lines. Consider a vector field $X$ on $N$ (with the above identification, $X : \mathbb R^2 \to \mathbb R^2$) orthogonal to foliation $\mathcal F_N$ for the metric $g_N$, such that all vectors have length $1$ for the metric $g_N$. Denoting by $X_1$ and $X_2$ the two coordinates of $X$ in $\mathbb R^2$, we may assume that $X_2$ is everywhere positive. Let $\gamma : (a, b) \to \mathbb R^2$ be a maximal integral curve of $X$, where $(a, b)$ is an open interval of $\mathbb R$, and denote by $\gamma_1$ and $\gamma_2$ the two coordinates of $\gamma$ in $\mathbb R^2$. Notice that $\gamma_2$ is increasing, so $\lim_{t \to b} \gamma_2(t)$ exists. If it is finite, $X(\gamma(t))$ (which depends only on $\gamma_2(t)$) has a limit when $t \to b$, so $b = +\infty$, but the limit of $X_2(\gamma(t))$ is positive, which contradicts the fact that $\lim_{t \to b} \gamma_2(t)$ is finite. Thus, $\lim_{t \to b} \gamma_2(t) = +\infty$ and for the same reason, $\lim_{t \to a} \gamma_2(t) = - \infty$.

The mapping
\[ \begin{aligned} \Phi: \overline P^0 \times (a, b) & \to N
\\ (p, t) & \mapsto p \cdot \gamma(t) \end{aligned} \]
is bijective, and by the inverse function theorem, it is a diffeomorphism. Since $\overline P^0$ is the Lie group $\mathbb R$, this is the diffeomorphism announced in the statement of Lemma~\ref{lemmeYZ}. With these coordinates, the metric on $N$ is
\[ g = \alpha_y(y, z) dy^2 + \alpha_z(y, z) dz^2 + \alpha_{yz} dy dz. \]
For all $p \in \overline P^0$, the curve $p \circ \gamma$ has unit speed and is orthogonal to $\mathcal F_N$, so $\alpha_z$ is everywhere $1$ and $\alpha_{yz}$ is everywhere $0$. Also, the action of $\overline P^0$ implies that $\alpha_y$ depends only on $z$. Thus, the metric has the desired form.
\end{proof}

\begin{lemme} \label{lemmaDiscreteImage}
Consider the mapping $r : \overline P \to \mathbb R_{>0}$, which gives the ratio of a similarity. Then the image of $\overline P$ by $r$ is discrete.
\end{lemme}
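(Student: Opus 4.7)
The plan is to argue by contradiction. Suppose $r(\overline P)$ is not discrete; as a non-discrete subgroup of $\mathbb{R}_{>0}$, it must then be dense.

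First, I would use the $(y,z)$-coordinates from Lemma~\ref{lemmeYZ} to pin down the form of an arbitrary $\psi \in \overline P$. Since $\overline P^0$ is normal in $\overline P$ and its orbits are the horizontal lines, $\psi$ permutes these lines, so $\psi(y,z) = (\psi_1(y,z), \psi_2(z))$; the condition $\psi^* g = r(\psi)^2 g$ then forces $\psi_2(z) = \epsilon \lambda z + e$ with $\epsilon \in \{\pm 1\}$ and $\lambda = r(\psi)$. Next I would narrow down $(a,b)$. The case $(a,b) = \mathbb{R}$ is impossible because $(\mathbb{R}^2, \varphi(z)\,dy^2 + dz^2)$ is always a complete Riemannian manifold (a Cauchy sequence has bounded $z$-coordinate by $1$-Lipschitzness of the projection, and on each strip $\mathbb{R} \times [-R,R]$ the metric is bi-Lipschitz equivalent to the flat one), contradicting the non-completeness of $N$. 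The case of two finite endpoints is impossible because any affine bijection of a bounded open interval must have slope $\pm 1$, so $r(\overline P) = \{1\}$, contrary to the existence of elements of $\pi_1(M)$ with ratio $\neq 1$ guaranteed by Case~3 of Theorem~\ref{thmConformalStructure}. Thus $(a,b)$ is a half-line; up to reflecting $z$, we may take $(a,b) = (a,+\infty)$, and then the case $\epsilon = -1$ is also ruled out (a decreasing affine map cannot preserve $(a,+\infty)$), so every $\psi_2$ has the form $z \mapsto a + \lambda(z-a)$.

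With this explicit form in hand, the $\overline P$-orbit of a point $(y_0, z_0) \in N$ is $\mathbb{R} \times \{a + \lambda(z_0 - a) : \lambda \in r(\overline P)\}$, the $\mathbb{R}$-factor being supplied by $\overline P^0$. Density of $r(\overline P)$ together with $z_0 - a > 0$ makes this orbit dense in $N$. But $\overline P$ acts properly on $N$ by Lemma~\ref{lemmaLieGroup}, so its orbits are closed, and a closed dense subset of $N$ equals $N$. Hence $\overline P$ acts transitively on $N$, and by Lemma~\ref{lemmaClosure} the closures of the leaves of $\mathcal F$ in $M$ — parametrized by $N/\overline P$ — reduce to a single set. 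Since these closures partition $M$ by Theorem~\ref{thmClosures}, this unique closure must be all of $M$, contradicting $\dim \overline F = q + 1 < q + 2 = \dim M$.

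The main obstacle is the preliminary analysis of $(a,b)$: ruling out both $(a,b) = \mathbb{R}$ and the bounded case requires combining non-completeness of $N$ with the presence of deck transformations of ratio $\neq 1$. Once $(a,b)$ is identified as a half-line and the action of $\overline P$ on the $z$-coordinate is explicit, the density-versus-properness dichotomy closes the argument cleanly.
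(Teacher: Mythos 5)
Your proof is correct, but it takes a genuinely different route from the paper's. The paper disposes of the lemma in one line of Lie theory: if $r(\overline P)$ were not discrete, $r$ would be surjective onto $\mathbb R_{>0}$, hence admit a section (a one-parameter subgroup), forcing $r(\overline P^0) = \mathbb R_{>0}$ and contradicting the fact, established in Lemma~\ref{lemmaNilpotent}, that $\overline P^0$ consists of isometries. You argue dynamically instead: density of $r(\overline P)$ makes every $\overline P$-orbit dense in $N$ (after pinning the action on the $z$-coordinate down to $z \mapsto a + \lambda(z-a)$ on a half-line), while properness of the $\overline P$-action (Lemma~\ref{lemmaLieGroup}) forces orbits to be closed, so the orbit would be all of $N$ and the leaf closure all of $M$, against the dimension bound of Theorem~\ref{thmClosures}. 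Your route is longer, since you must re-derive part of the normal form that the paper only establishes afterwards in Lemma~\ref{lemmaP}, including ruling out $(a,b)=\mathbb R$ via completeness and the bounded case via the existence of non-isometries; but it is arguably more robust at the crucial step, because the paper's jump from ``non-discrete'' to ``surjective'' tacitly requires $r(\overline P)$ to be closed in $\mathbb R_{>0}$ (a non-discrete subgroup is merely dense, and since $\overline P^0$ acts by isometries the image $r(\overline P)=r(P)$ is countable, so density without surjectivity is not excluded a priori), whereas your properness/closed-orbit argument supplies exactly that missing closedness. Two small points to tighten: when deriving $\psi_2(z) = \epsilon\lambda z + e$ you should note, as in the proof of Lemma~\ref{lemmaP}, that $\psi$ also preserves the foliation orthogonal to $\mathcal F_N$, so that $\psi_1$ is independent of $z$ and the $dz^2$-coefficient gives $\abs{\psi_2'} = \lambda$ exactly; and the final contradiction is available one step earlier, since a proper action of the one-dimensional Lie group $\overline P$ cannot have a two-dimensional (hence open and closed, hence full) orbit in $N$, with no need to invoke Theorem~\ref{thmClosures}.
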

\begin{proof}
If the image of $\overline P$ is not discrete, then $r$ is surjective, and therefore $r$ has a section (because $\overline P$ is a Lie group): but then, the image of $\overline P^0$ by $r$ is $\mathbb R$, which contradicts the fact that $\overline P^0$ contains only isometries.
\end{proof}

Now, each point of $\tilde M = \mathbb R^q \times N$ has three coordinates $(x, y, z)$, where $x \in \mathbb R^q$. We will denote by $g_{\mathbb R^q}$ the standard Euclidean metric in $\mathbb R^q$.

\begin{lemme} \label{lemmaP}
Choose any $p \in \pi_1(M)$ with ratio $\lambda \neq 1$. Then:
\begin{itemize}
	\item In Lemma~\ref{lemmeYZ}, one may require that:
	\begin{itemize}
		\item $(a, b) = (0, +\infty)$.
		\item For all $z \in (0, +\infty)$, $\varphi(\lambda(z)) = \lambda^{2q+2} \varphi(z)$.
	\end{itemize}
	\item The mapping $p$ has the form $p(x, y, z) = (p_0(x), p_1(y), p_2(z))$, where $p_0$ is a similarity of $\mathbb R^q$ of ratio $\lambda$, $p_1(y) = \lambda^{-q} y$ or $p_1(y) = - \lambda^{-q} y$, and $p_2(z) = \lambda z$.
\end{itemize}
\end{lemme}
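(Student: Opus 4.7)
The plan is to decompose $p$ along the product $\tilde M = \mathbb R^q \times N$, read off its action on $N$ in the coordinates of Lemma~\ref{lemmeYZ}, and then pin down the constants by combining the similarity equation with the lattice constraint of Lemma~\ref{lemmaGamma0}. Since $\mathbb R^q \times N$ is the de Rham decomposition of $\tilde M$ and its two factors are of different types, every element of $\pi_1(M)$ preserves the splitting (as already used at the beginning of Section~\ref{sectTori}), so I may write $p = (p_0, p')$ with $p_0 \in \mathrm{Sim}(\mathbb R^q)$ and $p' \in \mathrm{Sim}(N)$, both of ratio $\lambda$. Since $\overline P^0 \cong \mathbb R$ is normal in $\overline P$ and acts on $N$ by $\tau_t : (y,z) \mapsto (y+t,z)$, conjugation by $p'$ preserves $\overline P^0$ and acts on it as multiplication by some constant $c \in \mathbb R^*$. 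The relation $p' \circ \tau_t = \tau_{ct} \circ p'$ then forces
\[
p'(y, z) = (c\,y + \psi(z),\, \zeta(z))
\]
for smooth functions $\psi, \zeta$. Imposing $(p')^* g_N = \lambda^2 g_N$ with $g_N = \varphi(z)\,dy^2 + dz^2$ and matching coefficients yields $\psi'(z) = 0$, $\zeta'(z)^2 = \lambda^2$, and the fundamental identity $c^2 \varphi(\zeta(z)) = \lambda^2 \varphi(z)$; so $\psi$ is a constant and $\zeta(z) = \pm\lambda z + d$ for some constant $d$.

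The key input is determining $c$. By Lemma~\ref{lemmaGamma0}, $\Gamma_0$ is a lattice in $\mathbb R^q \times \overline P^0 \cong \mathbb R^{q+1}$, preserved by conjugation by $p$. On the translation subgroup $\mathbb R^q \subseteq \mathrm{Sim}(\mathbb R^q)$ this conjugation acts as $\lambda O$, where $O$ is the orthogonal part of $p_0$; on $\overline P^0$ it acts as multiplication by $c$. The resulting automorphism of the lattice $\Gamma_0$ must have determinant $\pm 1$, giving $\pm \lambda^q \cdot c = \pm 1$, hence $c = \pm \lambda^{-q}$; in particular $c \neq \pm 1$ because $\lambda \neq 1$. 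To rule out $\zeta'(z) = -\lambda$, I would use that $p_0$ has a fixed point in $\mathbb R^q$ (since $\lambda \neq 1$), so the free action of $\pi_1(M)$ on $\tilde M$ forces $p'$ to act on $N$ without fixed point. If $\zeta(z) = -\lambda z + d$, then $\zeta$ is an orientation-reversing bijection of $(a,b)$, which is impossible on bounded or semi-infinite intervals when $\lambda \neq 1$, and forces $(a,b) = \mathbb R$; but then $z^* = d/(1+\lambda) \in (a,b)$, and solving $c y^* + \psi = y^*$ (possible since $c \neq 1$) produces a fixed point of $p'$, a contradiction. Hence $\zeta(z) = \lambda z + d$.

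Finally I would normalize the coordinates. Translating $y$ by $\psi/(1-c)$ and $z$ by $d/(1-\lambda)$ (both valid because $c \neq 1$ and $\lambda \neq 1$) brings $p'$ into the form $(y, z) \mapsto (cy, \lambda z)$ while preserving the shape $\varphi(z)\,dy^2 + dz^2$ of the metric for a new $\varphi$. In these coordinates the new interval $(a,b)$ satisfies $\lambda(a,b) = (a,b)$; combined with $\lambda \neq 1$, this forces each endpoint into $\{-\infty, 0, +\infty\}$, and the case $(a,b) = \mathbb R$ is excluded because $0$ would then be an interior fixed point of $\zeta$ giving a fixed point of $p'$. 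Up to the substitution $z \mapsto -z$, which preserves the shape of the metric, we may assume $(a, b) = (0, +\infty)$. Substituting $c = \pm \lambda^{-q}$ and $\zeta(z) = \lambda z$ into $c^2 \varphi(\zeta(z)) = \lambda^2 \varphi(z)$ gives $\varphi(\lambda z) = \lambda^{2q+2} \varphi(z)$, as desired. The main obstacle is the lattice computation yielding $c = \pm \lambda^{-q}$: it is the only step in which $q$ enters, and it requires exploiting simultaneously the Euclidean translation structure of $\mathrm{Sim}(\mathbb R^q)$, the abelianness of $\overline P^0$, and the integrality of the conjugation action on $\Gamma_0$.
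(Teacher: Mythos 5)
Your proposal is correct and follows essentially the same route as the paper: the product form of $p$ is extracted from the normality of $\overline P^0$ in $\overline P$, the constant $|c|=\lambda^{-q}$ comes from the fact that conjugation by $p$ preserves the lattice $\Gamma_0\subseteq\mathbb R^q\times\overline P^0$ (your determinant-$\pm1$ argument is the same as the paper's volume-preservation argument on the compact quotient), and freeness of the action pins down $(a,b)=(0,+\infty)$ and the sign of $\zeta'$. The only differences are cosmetic (you derive the splitting from the conjugation relation $p'\circ\tau_t=\tau_{ct}\circ p'$ rather than from preservation of $\mathcal F_N$ and $(\mathcal F_N)^\perp$, and you make the exclusion of the orientation-reversing case explicit), so no further commentary is needed.
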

\begin{proof}
Since $\overline P^0$ is normal in $\overline P$, the group $\overline P$ preserves the foliation $\mathcal F_N$, so the action of $p$ on $N$ preserves the foliation $\mathcal F_N$. Thus, $p$ also preserves $(\mathcal F_N)^\bot$, and we may write $p(x, y, z) = (p_0(x), p_1(y), p_2(z))$. Since $p$ has ratio $\lambda$, we have \[ p^*(g_{\mathbb R^q} + \varphi(z) dy^2 + dz^2) = \lambda^2 (g_{\mathbb R^q} + \varphi (z) dy^2 + dz^2), \] but also \[ p^*(g_{\mathbb R^q} + \varphi(z) dy^2 + dz^2) = p_0^* g_{\mathbb R^q} + (p_1'(y))^2 (\varphi(p_2(z)) dy^2 + (p_2'(z))^2 dz^2. \] Thus, for all $(x, y, z) \in \mathbb R^q \times \overline P^0 \times \mathbb (a, b)$, we have 
$(p_1'(y))^2 = \lambda^2 \varphi(z) / \varphi(p_2(z))$.
In particular, $p_1'(y)$ does not depend on $y$: we will write $p_1'(y) = \mu$. Hence $p_1$ is a similarity of $\mathbb R$ of ratio $\mu$. Furthermore, $p_0$ is a similarity of $\mathbb R^q$ of ratio $\lambda$, and $p_2$ is a similarity of $(a, b)$ of ratio $\lambda$. Since $\Gamma_0$ is normal in $\pi_1(M)$, the mapping $p$ induces a diffeomorphism on $\mathbb R^q \times \overline P^0 / \Gamma_0$, which is compact by Lemma~\ref{lemmaGamma0}. Thus the mapping $(x, y) \mapsto (p_1(x), p_2(y))$ is volume-preserving, which implies that $\abs{\lambda^q \mu} = 1$.

The similarity $p_1$ has ratio $\mu \neq 1$, and therefore it has a fixed point in $\mathbb R$: up to a translation we may assume that this fixed point is $0$, thus $p_1(y) = \pm \lambda^{-q} y$. Since $P$ acts freely on $N$, this implies that $p_2$ has no fixed point, hence $(a, b) \neq \mathbb R$. Furthermore, $(a, b)$ cannot have finite length. Thus, $(a, b)$ is a half-line: it is isometric to $(0, +\infty)$. From now on, we will assume $(a, b) = (0, +\infty)$. Hence $p_2(z) = \lambda z$.

Considering again the equality $(p_1'(y))^2 = \lambda^2 \varphi(z) / \varphi(p_2(z))$, we deduce that for all $z \in (0, +\infty)$, $\varphi(\lambda z) = (\lambda / \mu)^2 \varphi(z)$.
\end{proof}

\begin{lemme} \label{lemmaPi1generators}
Choose $p \in \pi_1(M)$ which has ratio $\lambda < 1$, where $\lambda$ is maximal (this is made possible Lemma~\ref{lemmaDiscreteImage}). Then $\pi_1(M)$ is generated by $\Gamma_0$ and $p$.
\end{lemme}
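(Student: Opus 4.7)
The plan is to exploit the discrete structure of the set of ratios in $\pi_1(M)$ together with the free action on $N$ established in Lemma~\ref{lemmaFreeAction}.

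First I would identify the image of the ratio homomorphism. Every element of $\pi_1(M) \subseteq \mathrm{Sim}(\mathbb R^q) \times \mathrm{Sim}(N)$ has a well-defined ratio equal to that of its $N$-component, so the ratio map factors as $\pi_1(M) \to P \xrightarrow{r} \mathbb R_{>0}$. By Lemma~\ref{lemmaDiscreteImage}, $r(\overline P)$ is discrete, and density of $P$ in $\overline P$ then forces $r(P) = r(\overline P)$. This image is a discrete subgroup of $\mathbb R_{>0}$, hence cyclic; by the maximality of $\lambda$ it equals $\lambda^{\mathbb Z}$. Consequently, for any $q \in \pi_1(M)$ there is a unique $k \in \mathbb Z$ with $r(q) = \lambda^k$, and $q' := q p^{-k}$ has ratio $1$.

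Next I would show $q' \in \Gamma_0$, so that $q = q' p^k$ lies in $\langle \Gamma_0, p \rangle$. Writing $q' = (q'_0, q'_1)$ with $q'_1 \in P$, the normality of $\overline P^0$ in $\overline P$ ensures that $q'_1$ preserves the foliation $\mathcal F_N$ whose leaves are the orbits of $\overline P^0$. In the coordinates $(y, z) \in \mathbb R \times (0, +\infty)$ from Lemma~\ref{lemmeYZ}, a computation analogous to that in the proof of Lemma~\ref{lemmaP} shows that an isometry of $N$ preserving $\mathcal F_N$ must fix the $z$-coordinate (since $(0, +\infty)$ admits no non-trivial bijective self-isometry) and act on the $y$-coordinate as $y \mapsto \pm y + d$ for some $d \in \mathbb R$. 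In the ``$+$'' case, $q'_1$ is precisely the translation by $d$ in the parametrization of $\overline P^0 \cong \mathbb R$, so $q'_1 \in \overline P^0$ and $q' \in \Gamma_0$, as desired.

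The main obstacle is ruling out the ``$-$'' case, and this is where Lemma~\ref{lemmaFreeAction} becomes essential: if $q'_1(y, z) = (-y + d, z)$, then $q'_1$ fixes every point of the line $\{d/2\} \times (0, +\infty) \subset N$. Since $\pi_1(M)$ acts freely on $N$, this would force $q' = \mathrm{Id}$, contradicting $q'_1 \neq \mathrm{Id}$. Hence only the ``$+$'' case occurs, $q' \in \Gamma_0$, and therefore $\pi_1(M) = \langle \Gamma_0, p \rangle$.
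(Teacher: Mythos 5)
Your proof is correct and follows essentially the same route as the paper's: reduce an arbitrary element to one of ratio $1$ using the discreteness of the ratio group, observe that such an element fixes the $z$-coordinate and acts on $y$ as $y \mapsto \pm y + d$, and use the free action on $N$ (Lemma~\ref{lemmaFreeAction}) to exclude the reflection case, leaving a translation, i.e.\ an element of $\Gamma_0$. The only difference is cosmetic (you spell out the cyclicity of the ratio group and the exclusion of the orientation-reversing case a bit more explicitly than the paper does).
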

\begin{proof}
Choose another element $\hat p \in \pi_1(M)$, $\hat p = (\hat p_0, \hat p_1, \hat p_2)$, with ratio $\hat \lambda$. Then there exists $k \in \mathbb Z$ such that $\hat \lambda = \lambda^k$, and therefore for all $z \in (0, +\infty)$, $p_2^{-k} \hat p_2 (z) = z$. Since $\pi_1(M)$ acts freely on $N$ (by Lemma~\ref{lemmaFreeAction}), $p_1^{-k} \hat p_1$ has no fixed point, so it is a translation, which means that $p^{-k} \hat p \in \Gamma_0$.
\end{proof}

Finally, apply a linear map in order to assume that $\Gamma_0$ is the lattice $\mathbb Z^{q+1}$ in $\mathbb R^{q+1}$. Then Lemmas~\ref{lemmaP} and~\ref{lemmaPi1generators} imply Theorem~\ref{thmDim2}.

\section*{Acknowledgements}
This paper contains results which were obtained during my PhD thesis: I would like to thank my advisor Abdelghani Zeghib for his help.

This work was also supported by the ERC Avanced Grant 320939, Geometry and Topology of Open Manifolds (GETOM).

I would like to thank the referees for helping me improve the paper.

\bibliographystyle{alpha}
\bibliography{ref}

\end{document}